\documentclass[english]{amsart}

\usepackage{esint}
\usepackage[svgnames]{xcolor} 
\usepackage[colorlinks,citecolor=red,pagebackref,hypertexnames=false,breaklinks]{hyperref}
\usepackage{pgf,tikz}
\usepackage{pdfsync}

\usepackage{dsfont}
\usepackage{url}
\usepackage[utf8]{inputenc}
\usepackage[T1]{fontenc}
\usepackage{lmodern}
\usepackage{babel}
\usepackage{mathtools}  
\usepackage{amssymb}
\usepackage{lipsum}
\usepackage{mathrsfs}
\usepackage{color}
\usepackage[marginparwidth=2cm]{geometry}

\newtheorem{theorem}{Theorem}[section]
\newtheorem{proposition}{Proposition}[section]
\newtheorem{lemma}{Lemma}[section]

\newtheorem{corollary}{Corollary}[section]

\numberwithin{equation}{section}

\newcommand{\be}{\begin{equation}}
\newcommand{\ee}{\end{equation}}
\newcommand{\ba}{\begin{array}}
\newcommand{\ea}{\end{array}}
\newcommand{\bea}{\begin{eqnarray*}}
\newcommand{\eea}{\end{eqnarray*}}
\newcommand{\bean}{\begin{eqnarray}}
\newcommand{\eean}{\end{eqnarray}}

\title[qualitative photoacoustic tomography]{Stability for quantitative photoacoustic 
tomography revisited}

\thanks{The authors were supported by the grant ANR-17-CE40-0029 of the French National Research Agency ANR (project MultiOnde).}
 
\author[Eric Bonnetier]{Eric Bonnetier}
\address{Fourier Institute,  Universit\'e  
Grenoble-Alpes, 700 Avenue Centrale,
38401 Saint-Martin-d'H\`eres, France}
\email{eric.bonnetier@univ-grenoble-alpes.fr}

\author[Mourad Choulli]{Mourad Choulli}
\address{Universit\'e de Lorraine}
\email{mourad.choulli@univ-lorraine.fr}

\author[Faouzi Triki]{Faouzi Triki}
\address{Laboratoire Jean Kuntzmann,  UMR CNRS 5224, 
Universit\'e  Grenoble-Alpes, 700 Avenue Centrale,
38401 Saint-Martin-d'H\`eres, France}
\email{faouzi.triki@univ-grenoble-alpes.fr}

\date{}

\begin{document}

\begin{abstract}

 This paper deals with the issue of stability  in determining
the absorption  and the diffusion coefficients in  quantitative photoacoustic imaging. 
We  establish a global conditionnal H\"older  stability inequality  from the knowledge of two internal data obtained from  optical waves, generated by two point sources  in a region where the optical coefficients are known.

\medskip
\noindent
{\bf Mathematics subject classification :} 35R30.

\smallskip
\noindent
{\bf Key words :} Elliptic equations, diffusion coefficient, absorption coefficient,  stability inequality, multiwave imaging.
\end{abstract}

\maketitle

\tableofcontents

\section{Introduction}\label{section1}

Throughout this text $n\ge 3$ is a fixed integer. If $0<\beta \le 1$ we denote by $C^{0,\beta} (\mathbb{R}^n)$ the vector space of bounded continuous functions $f$ on $\mathbb{R}^n$ satisfying
\[
[f]_\beta=\sup\left\{ \frac{|f(x)-f(y)|}{|x-y|^\beta} ;\; x,y\in \mathbb{R}^n,\; x\ne y\right\}<\infty .
\]
$C^{0,\beta} (\mathbb{R}^n)$ is then a Banach space when it is endowed with its natural norm
\[
\|f\|_{C^{0,\beta} (\mathbb{R}^n)}=\|f\|_{L^\infty(\mathbb{R}^n)}+[f]_\beta .
\]
Define $C^{1,\beta} (\mathbb{R}^n)$ as the vector space of functions $f$ from $C^{0,\beta} (\mathbb{R}^n)$ so that $\partial_jf\in  C^{0,\beta} (\mathbb{R}^n)$, $1\le j\le n$. The vector space $C^{1,\beta} (\mathbb{R}^n)$ equipped with the norm
\[
\|f\|_{C^{1,\beta} (\mathbb{R}^n)}=\|f\|_{C^{0,\beta} (\mathbb{R}^n)}+\sum_{j=1}^n\|\partial_jf\|_{C^{0,\beta} (\mathbb{R}^n)}
\]
is a Banach space.

The data in this paper consists in $\xi_1,\xi_2\in \mathbb{R}^n$, $\Omega \Subset \mathbb{R}^n\setminus\{\xi_1,\xi_2\}$ a $C^{1,1}$ bounded domain with boundary $\Gamma$, $0<\alpha <1$, $0<\theta <\alpha$, $\lambda > 1$ and  $\kappa > 1$. For notational convenience, the set of data will denoted by $\mathfrak{D}$. That is
\[
\mathfrak{D}=(n,\xi_1,\xi_2,\Omega ,\alpha ,\theta , \lambda ,\kappa) .
\]
Denote by $\mathcal{D}(\lambda,\kappa)$ the set of couples $(a,b)\in C^{1,1} (\mathbb{R}^n)\times C^{0,1} (\mathbb{R}^n)$ satisfying 
\bean
\lambda ^{-1}\le a\quad \mbox{and}\quad \|a\|_{C^{1,1} (\mathbb{R}^n)}\le \lambda,\\
\kappa ^{-1}\le b\quad \mbox{and} \quad \|b\|_{C^{0,1} (\mathbb{R}^n)}\le \kappa,
\eean
Define further the elliptic operator ${L}_{a,b}$ acting as follows
\begin{equation}\label{i1}
{L}_{a,b}u(x)=-\mathrm{div}(a(x)\nabla u(x))+b(x)u(x).
\end{equation}
We show in Section \ref{section2} that if $(a,b)\in \mathcal{D}(\lambda, \kappa)$ then the operator $L_{a,b}$ admits a unique fundamental solution $G_{a,b}$ satisfying, where $\xi \in \mathbb{R}^n$,
\[
G_{a,b}(\cdot ,\xi )\in C^{2,\alpha}_{\mathrm loc}(\mathbb{R}^n\setminus\{\xi\}), \quad L_{a,b} G_{a,b}(\cdot ,\xi )=0\;  \mbox{in}\; \mathbb{R}^n\setminus\{\xi\},
\]
and,  for any $f\in C_0^\infty (\mathbb{R}^n ),$ 
\[
u=\int_{\mathbb{R}^n}G_{a,b}(\cdot ,\xi)f(\xi )d\xi
\]
belongs to $H^2(\mathbb{R}^n)$ and it is the unique solution of $L_{a,b} u=f$. 
 
We deal in the present work with the problem of reconstructing $(a,b)\in \mathcal{D}(\lambda, \kappa)$ from energies generated by two point sources located at $\xi_1$ and $\xi_2$. Precisely, if $u_j(a,b)=G_{a,b}(\cdot ,\xi_j)$, $j=1,2$, we want to determine $(a,b)$ from the internal measurements 
\[ 
v_j(a,b)=bu_j(a,b)\quad  \textrm{in}\; \Omega,\quad j=1, 2.
\]

This inverse problem is related to photoacoustic tomography (PAT) where  optical energy absorption causes thermoelastic expansion of the tissue, which  in turn generates a pressure wave \cite{Wang-Book09}. This  acoustic signal is measured by transducers distributed on the boundary of the sample and  it is used for imaging optical properties of the sample.   The internal data $v_1(a,b)$ and $v_2(a,b)$ are obtained by performing  a first step consisting in a linear initial to boundary inverse problem for the acoustic wave equation.  Therefore the inverse problem that arises from this first inversion is to determine the diffusion coefficient $a$ and the absorption coefficient $b$ from the internal data $v_1(a,b)$ and $v_2(a,b)$ that are proportional to the local  absorbed optical energy inside the sample. This inverse problem is known in the literature as quantitative photoacoustic tomography \cite{ADFV, AKK, AmBoJuKa-SIAM10, AGKNS, BaUh-IP10, BR1, NaSc-SIAM14, Ch2}. 

Photoacoustic imaging provides in theory images of optical contrasts and ultrasound resolution \cite{Wang-Book09}. Indeed,  the resolution is mainly due to the small wavelength  of acoustic waves, while the contrast is  somehow related to  the sensitivity of optical waves to absorption and scattering properties of the medium in the diffusive regime.  However, in practice, it has been observed in various experiments that the imaging depth, i.e. the maximal depth of the medium at which structures can be resolved at expected resolution, of (PAT) is still fairly limited, usually on the order of millimeters. This is mainly due to the  fact that  optical waves are  significantly attenuated by absorption and scattering. In fact the generated 
 optical signal decays very fast in the depth direction. This is indeed a well known faced  issue in optical tomography \cite{Wa}.  In most physicists works dealing with quantitative (PAT), the absorption  $b>0$ is assumed to be constant and  the optical wave  is simplified to  $Ce^{-b z}$,  as a function of  the depth $z$,  which is known as  the Beer-Lambert-Bouguer law \cite{Cox-LA}. Recently in \cite{RT}, assuming that medium is layered, the authors  derived a stability estimate that shows that  the reconstruction of the optical coefficients  is stable in the region close to the optical illumination source and deteriorate exponentially far away.

Stability inequalities for this inverse problem  were  first obtained in \cite{BR1,BaUh-IP10} under a  strong non-degeneracy assumption.   Later in  \cite{ADFV}, the authors improved these results by removing the non-degeneracy assumption for well-chosen boundary conditions (Definition 2.3). 

Assuming that the optical waves  are generated  by two point sources $\delta_{\xi_i}, i=1,2$,  we aim to derive a stability estimate for  the recovery of the optical coefficients from internal data. We point out that taking the optical wave  generated by a point source outside the sample seems to be more realistic  than assuming a boundary condition.

In the statement of Theorem \ref{theorem1} below $C=C(\mathfrak{D})>0$ and $0<\gamma=\gamma (\mathfrak{D}) <1$ are constants.

\begin{theorem}\label{theorem1} 
For any $(a,b), (\tilde{a},\tilde{b})\in \mathcal{D}(\lambda, \kappa)$ satisfying $(a,b)= (\tilde{a},\tilde{b})$ on $\Gamma$, we have
\[
\|a -\tilde{a}\|_{C^{1,\alpha}(\overline{\Omega})}+ \|b -\tilde{b}\|_{C^{0,\alpha}(\overline{\Omega})} \le C\left(\|v_1 -\tilde{v}_1\|_{C(\overline{\Omega}) }+\|v_2 -\tilde{v}_2\|_{C(\overline{\Omega})}\right)^\gamma .
\]
\end{theorem}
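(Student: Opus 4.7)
My plan would be to reparametrize the unknowns so that the two data $v_1,v_2$ decouple them one after the other. Introduce $\sigma=a/b^2$ and the Liouville substitution $\phi_j=\sqrt{a}\,u_j$, which transforms $L_{a,b}u_j=0$ into $-\Delta\phi_j+q\phi_j=0$ in $\Omega$ with $q=\Delta\sqrt{a}/\sqrt{a}+b/a$. Using $u_1^2\nabla(u_2/u_1)=u_1\nabla u_2-u_2\nabla u_1$, the identity
\[
\mathrm{div}\bigl(a(u_1\nabla u_2-u_2\nabla u_1)\bigr)=u_1\,\mathrm{div}(a\nabla u_2)-u_2\,\mathrm{div}(a\nabla u_1)=u_1(bu_2)-u_2(bu_1)=0
\]
follows at once in $\Omega$; since $u_j=v_j/b$ and hence $au_1^2=\sigma v_1^2$, this rewrites as the transport-type identity
\[
\mathrm{div}\bigl(\sigma v_1^2\nabla w\bigr)=0\quad\text{in }\Omega,\qquad w=v_2/v_1,
\]
in which $w$ is entirely determined by the data.

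\textbf{Step 1 -- stability for }$\sigma$. Expanded, the above equation is a first-order linear transport equation $v_1^2\,\nabla w\cdot\nabla\sigma+\sigma\,\mathrm{div}(v_1^2\nabla w)=0$ whose characteristics are the integral curves of $\nabla w$. Since $\sigma=\tilde\sigma$ on $\Gamma$, integration along the characteristic through a boundary point yields a pointwise estimate of $\sigma-\tilde\sigma$ in terms of a $C^1$-type norm of the data, provided one has a uniform lower bound $|\nabla w|\ge c_0>0$ on $\overline\Omega$ over the whole class $\mathcal{D}(\lambda,\kappa)$. This non-degeneracy is to be obtained from the near-pole asymptotics $u_j(x)\sim c_n\,a(\xi_j)^{-1}|x-\xi_j|^{2-n}$ together with positivity and Hopf-type properties of the fundamental solutions of $L_{a,b}$; already in the simplest constant-coefficient setting $w\sim|x-\xi_1|^{n-2}/|x-\xi_2|^{n-2}$, whose gradient never vanishes since $\xi_1\ne\xi_2$.

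\textbf{Step 2 -- stability for }$a$ and $b$. Once $\sigma$ is controlled, so is $\phi_1=\sqrt{\sigma}\,v_1$, and hence $q=\Delta\phi_1/\phi_1$. Using $b/a=1/\sqrt{a\sigma}$, the function $\mu=\sqrt{a}$ satisfies
\[
\Delta\mu-q\mu=-\sigma^{-1/2}\quad\text{in }\Omega,
\]
a linear elliptic equation with the known Dirichlet trace $\mu|_\Gamma$. Standard Schauder estimates then transfer the stability of $(\sigma,q)$ into $C^{1,\alpha}$-stability for $\mu$, hence for $a$, after which $b=\sqrt{a/\sigma}$ inherits $C^{0,\alpha}$-stability.

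\textbf{Closing the argument and main obstacle.} The right-hand side of the theorem controls only $\|v_j-\tilde v_j\|_{C(\overline\Omega)}$, whereas Steps 1--2 use $C^1$ and even $C^2$ control of these differences; I would therefore close the argument with interpolation inequalities of the form
\[
\|v_j-\tilde v_j\|_{C^{k,\beta}(\overline\Omega)}\le C\,\|v_j-\tilde v_j\|_{C^{2,\alpha}(\overline\Omega)}^{1-\tau}\|v_j-\tilde v_j\|_{C(\overline\Omega)}^{\tau},
\]
combined with the uniform a priori Schauder bounds $\|v_j\|_{C^{2,\alpha}(\overline\Omega)}\le C(\mathfrak{D})$ inherited from $(a,b)\in C^{1,1}\times C^{0,1}$; this is precisely what forces the final H\"older exponent $\gamma$ in the conclusion to be strictly smaller than $1$. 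The main obstacle will be the uniform lower bound $|\nabla w|\ge c_0>0$ in $\overline\Omega$, since without it the transport equation for $\sigma$ cannot be inverted stably; producing it uniformly over the admissible class requires precise pointwise and gradient estimates on the fundamental solutions $G_{a,b}(\cdot,\xi_j)$ constructed in the preliminary section.
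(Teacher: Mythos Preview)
Your reduction to the divergence equation $\mathrm{div}(\sigma v_1^2\nabla w)=0$ with $w=v_2/v_1$ is essentially the same as the paper's, and the closing interpolation argument is also right. The genuine gap is in Step~1: you hinge the recovery of $\sigma$ on a \emph{pointwise} uniform lower bound $|\nabla w|\ge c_0>0$ on $\overline{\Omega}$. This is not something you can expect to obtain. The near-pole asymptotics $u_j\sim c_n a(\xi_j)^{-1}|x-\xi_j|^{2-n}$ are useless inside $\Omega$, which is compactly contained in $\mathbb{R}^n\setminus\{\xi_1,\xi_2\}$; there the function $w$ is just a bounded $C^{2,\alpha}$ solution of a divergence-form equation, and in dimension $n\ge 3$ such solutions can have critical points whose location depends on the coefficients in an uncontrolled way. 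No Hopf-type argument yields interior nonvanishing of gradients. Without this bound, the characteristics of your transport equation may stall, and the inversion fails.

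The paper handles exactly this obstacle by a different mechanism. First, Lemma~\ref{lemmaq1} gives only a lower bound on $\|\nabla w\|_{L^2(B(x^\ast,\rho))}$ for a single ball near $\xi_2$. This is then propagated throughout a neighborhood of $\Omega$ by quantitative unique continuation: a three-ball inequality for the gradient (Theorem~\ref{theoremlb1} and Lemma~\ref{lemmalb1}) followed by a frequency-function argument (Lemmas~\ref{lemma-bff}--\ref{lemma-a4}, Corollary~\ref{corollary-a1}) yielding the \emph{polynomial} lower bound $\|\nabla w\|_{L^2(B(x,r))}^2\ge \mathcal{C}_2(r/\delta)^{\mathcal{C}_1}$ for every $x\in\Omega$ and $0<r<\delta$. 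This is far weaker than $|\nabla w|\ge c_0$, but Lemma~\ref{lemma-a2} shows it suffices: for any $f\in C^{0,\alpha}$ one has $\|f\|_{L^\infty(\Omega)}\le C\|f\|_{C^{0,\alpha}}^{1-\hat\mu}\|f|\nabla w|^2\|_{L^1(\Omega)}^{\hat\mu}$. The integral quantity $\|(\sigma-\tilde\sigma)|\nabla w|^2\|_{L^1(\Omega)}$ is in turn controlled by $\|w-\tilde w\|_{L^2}$ via the energy identity of Lemma~\ref{lemma6.1}, not by integrating along characteristics. This replaces your pointwise transport inversion by an averaged one, and is the heart of the matter.

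Your Step~2 also diverges from the paper and runs into regularity trouble: with $\sigma=a/b^2$ (where $b\in C^{0,1}$ only) and $q=\Delta\sqrt{a}/\sqrt{a}+b/a$, the potential $q$ is merely $L^\infty$, so Schauder theory for $\Delta\mu-q\mu=-\sigma^{-1/2}$ does not yield $C^{2,\alpha}$ control of $\mu$, and recovering $q=\Delta\phi_1/\phi_1$ stably would need two derivatives of $\sqrt{\sigma}\,v_1$ that the a priori class does not provide. The paper instead takes $\sigma=au_1^2$ (which is $C^{1,1}$), observes $-\mathrm{div}(\sigma\nabla u_1^{-1})=v_1$, applies Schauder estimates to control $u_1^{-1}-\tilde u_1^{-1}$ in $C^{2,\alpha}$, and then reads off $a=\sigma u_1^{-2}$ and $b=v_1 u_1^{-1}$ directly.
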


The rest of this text is organized as follows. In section 2 we construct a fundamental solution and give its regularity induced by that of the coefficients of the operator under consideration. We derive pointwise lower and upper bounds
for the fundamental solution that are of interest themselves. These bounds show how the optical signal
decays fast  in the depth direction.   We also establish in this section a lower bound of the local $L^2$-norm of the gradient of the quotient of two fundamental solutions near one of the point sources. This is the key point for establishing our stability inequality. This last result is then used in Section 3 to obtain a uniform polynomial lower bound of the local $L^2$-norm of the gradient in a given region. This polynomial lower bound is obtained in two steps. In the first step we derive, via a three-ball inequality for the gradient, a uniform lower bound of negative exponential type. We use then in the second  step an argument based on  the so-called frequency function in order to improve this lower bound. In the last section we prove our main theorem following the known method consisting in reducing the original problem to the stability of an inverse conductivity problem.

\section{Fundamental solutions}\label{section2}

\subsection{Constructing fundamental solutions}\label{sectionE}

In this subsection we construct a fundamental solution of divergence form elliptic operators. Since our construction relies on heat kernel estimates, we first recall some known results.

Consider the parabolic operator $P_{a,b}$ acting as follows
\[
P_{a,b}u(x,t) = -L_{a, b}u(x,t) - \partial_t u(x,t)
\]
and set

\[
Q=\{ (x,t,\xi ,\tau )\in \mathbb{R}^n\times \mathbb{R}\times \mathbb{R}^n\times \mathbb{R};\; \tau <t\}.
\]
Recall that a fundamental solution of the operator  $P_{a, b}$  is a function $E_{a,b}\in C^{2,1}(Q)$ verifying $P_{a,b}E=0$ in $Q$ and, for every $f\in C_0^\infty (\mathbb{R}^n)$,
\[
\lim_{t\downarrow \tau} \int_{\mathbb{R}^n}E_{a,b}(x,t,\xi ,\tau )f(\xi)d\xi =f(x),\quad x\in \mathbb{R}^n.
\]
The classical results in the monographs by A. Friedman \cite{Fr}, O. A. Ladyzenskaja, V. A. Solonnikov  and N.N Ural'ceva \cite{LSU} show that $P_{a,b}$ admits a non negative fundamental solution when $(a,b)\in \mathcal{D}(\lambda ,\kappa )$.

It is worth mentioning that if $a=c$, for some constant $c>0,$ and $b=0$ then the fundamental solution $E_{c,0}$ is explicitly given by
\[
 E_{c,0}(x,t,\xi , \tau )=\frac{1}{[4\pi c(t-\tau)]^{n/2}}e^{-\frac{|x-\xi|^2}{4c(t-\tau)}},\quad (x,t,\xi,\tau)\in Q.
\]
Examining carefully the proof of the two-sided Gaussian bounds in \cite{FS}, we see that these bounds remain valid whenever  $a\in C^{1,1}(\mathbb{R}^n)$  satisfies 
\begin{equation}\label{de}
\lambda ^{-1}\le a\quad \mbox{and}\quad \|a\|_{C^{1,1} (\mathbb{R}^n)}\le \lambda .
\end{equation}
More precisely we have the following theorem in which
\[
\mathcal{E}_c(x,t)=\frac{c}{t^{n/2}}e^{-\frac{|x|^2}{ct}},\quad x\in \mathbb{R}^n,\; t>0,\; c>0.
\]

\begin{theorem}\label{theorem-FS}
 There exists a constant $c=c(n,\lambda )>1$ so that, for any $a\in C^{1,1}(\mathbb{R}^n)$ satisfying \eqref{de}, we have
\begin{equation}\label{gb2}
\mathcal{E}_{c^{-1}}(x-\xi ,t-\tau) \le E_{a,0}(x,t;\xi,\tau) \le \mathcal{E}_c(x-\xi ,t-\tau),
\end{equation}
for all $(x,t,\xi ,\tau )\in Q$.
\end{theorem}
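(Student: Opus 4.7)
The statement is essentially a direct application of the two-sided Aronson-type Gaussian bounds for symmetric uniformly elliptic divergence-form parabolic operators, as revisited by Fabes and Stroock \cite{FS} via Nash's original ideas. Writing the scalar operator $L_{a,0}u=-\mathrm{div}(a\nabla u)$ as $-\mathrm{div}(A\nabla u)$ with the symmetric matrix field $A(x)=a(x)I$, assumption \eqref{de} yields the uniform ellipticity estimate $\lambda^{-1}|\eta|^2\le A(x)\eta\cdot\eta \le \lambda|\eta|^2$ for every $x,\eta\in\mathbb{R}^n$, so $\lambda$ alone encodes the relevant structural information. The framework of \cite{FS} then applies, and the remaining task is purely to track that every constant produced along the way depends on $n$ and $\lambda$ only.

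For the upper bound I would follow the Nash--Fabes--Stroock route: first establish the on-diagonal smoothing estimate $E_{a,0}(x,t;\xi,\tau)\le C(n,\lambda)(t-\tau)^{-n/2}$ by combining the Nash inequality with the $L^p$-contractivity of the associated semigroup on $L^2(\mathbb{R}^n)$, then upgrade it to off-diagonal Gaussian decay via Davies' perturbation trick: conjugate the semigroup by $e^{\rho\varphi}$ for an arbitrary bounded $1$-Lipschitz function $\varphi$, observe that the conjugated operator remains uniformly elliptic with a controlled lower-order perturbation, and finally optimise over $\rho$ and the choice of $\varphi$ peaking near the line joining $\xi$ and $x$. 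Ellipticity alone suffices in both steps.

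For the lower bound the plan is to invoke Moser's parabolic Harnack inequality, which holds in the same divergence-form symmetric setting, and to chain it along a sequence of parabolic cylinders joining $(\xi,\tau)$ to $(x,t)$ with a number of steps comparable to $1+|x-\xi|^2/(t-\tau)$, in the manner of Aronson. Seeded by the on-diagonal estimate, this yields the lower Gaussian bound in \eqref{gb2}, again with a constant depending only on $n$ and $\lambda$.

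The only genuine difficulty is bookkeeping: one must verify that no constant borrowed from \cite{FS} secretly depends on a modulus of continuity or on higher regularity of the coefficient. The $C^{1,1}$ hypothesis assumed here is in fact much stronger than what the Gaussian bounds require; it will become essential later (for the Schauder-type regularity of $G_{a,b}$), but in the present step it is merely the uniform ellipticity inherited from \eqref{de} that drives the Gaussian profile, and this is what the phrase ``examining carefully the proof'' is meant to cover.
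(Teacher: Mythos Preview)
Your proposal is correct and aligns with the paper's own treatment: the paper does not give a proof of this theorem at all, but simply states that ``examining carefully the proof of the two-sided Gaussian bounds in \cite{FS}, we see that these bounds remain valid'' under assumption \eqref{de}, with constants depending only on $n$ and $\lambda$. Your plan is precisely a faithful unpacking of what that careful examination amounts to---Nash's on-diagonal estimate plus Davies' exponential perturbation for the upper bound, and the parabolic Harnack chain for the lower bound---and you correctly identify that only the uniform ellipticity bounds $\lambda^{-1}\le a\le\lambda$ are used, not the extra $C^{1,1}$ regularity.
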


The relationship between $\mathcal{E}_c$ and $E_{c,0}$ is given by the formula
\begin{equation}\label{ee}
\mathcal{E}_c(x-\xi ,t-\tau)=\frac{(\pi c)^{n/2+1}}{\pi}E_{c/4,0}(x,t,\xi , \tau ),\quad (x,t,\xi ,\tau )\in Q.
\end{equation}
The following comparison principle will be useful in the sequel.

\begin{lemma}\label{comparaison-lemma}
Let $(a,b_1),(a,b_2)\in \mathcal{D}(\lambda ,\kappa)$ so that $b_1\le b_2$. Then $E_{a,b_2}\le E_{a,b_1}$.
\end{lemma}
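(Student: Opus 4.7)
The strategy is a standard perturbation / Duhamel argument, based on the observation that $L_{a,b_2} - L_{a,b_1}$ is multiplication by the nonnegative function $b_2 - b_1$.

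First, I set $w := E_{a,b_1} - E_{a,b_2}$. Using $P_{a,b_i}E_{a,b_i}=0$ together with the identity $L_{a,b_2} u - L_{a,b_1} u = (b_2 - b_1)u$, a direct computation shows that, in the forward-time variables $(x,t) \in \mathbb{R}^n \times (\tau,\infty)$ with the source point $(\xi,\tau)$ held fixed, $w$ satisfies
\[
\partial_t w - \mathrm{div}(a\nabla w) + b_1 w = (b_2 - b_1)\,E_{a,b_2}(\cdot,\cdot;\xi,\tau) \ge 0,
\]
with vanishing initial trace at $t=\tau$ (both $E_{a,b_i}(\cdot,t;\xi,\tau)$ tend to $\delta_\xi$ as $t\downarrow\tau$).

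Next, I would invoke Duhamel's formula for the forward parabolic operator $\partial_t - \mathrm{div}(a\nabla\cdot) + b_1$, whose fundamental solution is precisely $E_{a,b_1}$, to obtain the representation
\[
w(x,t;\xi,\tau) = \int_\tau^t\!\!\int_{\mathbb{R}^n} E_{a,b_1}(x,t;y,s)\,[b_2(y)-b_1(y)]\,E_{a,b_2}(y,s;\xi,\tau)\,dy\,ds.
\]
All three factors in the integrand are nonnegative (the fundamental solutions produced by the Friedman / Ladyzenskaja--Solonnikov--Ural'ceva construction recalled above are nonnegative, and $b_2-b_1\ge 0$ by hypothesis), whence $w\ge 0$ on $Q$, which is exactly the claimed inequality.

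The main technical point is to justify this Duhamel identity rigorously despite the singular initial condition. I would handle it by approximation: mollify the point source by a nonnegative compactly supported $\varphi_\varepsilon \to \delta_\xi$ and set $u_i^\varepsilon(x,t) = \int E_{a,b_i}(x,t;y,\tau)\,\varphi_\varepsilon(y)\,dy$, which is the classical, bounded, nonnegative solution of $\partial_t u - \mathrm{div}(a\nabla u) + b_i u = 0$ with initial datum $\varphi_\varepsilon$ at time $\tau$. For these smooth solutions the Duhamel identity
\[
u_1^\varepsilon(x,t) - u_2^\varepsilon(x,t) = \int_\tau^t\!\!\int_{\mathbb{R}^n} E_{a,b_1}(x,t;y,s)\,[b_2(y)-b_1(y)]\,u_2^\varepsilon(y,s)\,dy\,ds
\]
is routine, and its right-hand side is nonnegative. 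The parabolic maximum principle (dominating $u_i^\varepsilon$ by the solution with potential $0$) combined with the Gaussian upper bound of Theorem \ref{theorem-FS} yields a uniform-in-$\varepsilon$ Gaussian majoration of $u_i^\varepsilon$, so that dominated convergence lets one pass to the limit $\varepsilon \downarrow 0$, producing the singular Duhamel identity displayed above and hence $E_{a,b_2} \le E_{a,b_1}$ throughout $Q$.
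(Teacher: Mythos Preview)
Your proof is correct and follows essentially the same Duhamel/perturbation argument as the paper. The only cosmetic difference is that the paper applies Duhamel with $E_{a,b_2}$ to the solution associated with $b_1$ and an arbitrary nonnegative test function $f\in C_0^\infty$ (obtaining $\int E_{a,b_2}f\le \int E_{a,b_1}f$ and concluding pointwise from there), which spares the explicit mollification and limit $\varepsilon\downarrow 0$ that you carry out.
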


\begin{proof}
Pick $0\le f\in C_0^\infty (\mathbb{R}^n)$. Let $u$ be the solution of the initial value problem
\[
P_{a,b_1}u(x,t)=0\; \in \mathbb{R}^n\times \{t>\tau\}, \quad u(x,\tau )=f.
\]
We have
\begin{equation}\label{comparaison1}
u(x,t)=\int_{\mathbb{R}^n}E_{a,b_1} (x,t;\xi, \tau )f(\xi) d\xi \ge 0.
\end{equation}
On the other hand, as $P_{a,b_1}u(x,t)=0$ can be rewritten as 
\[
P_{a,b_2}u(x,t)=[b_1(x)-b_2(x)]u(x,t),
\]
we obtain
\begin{align}
u(x,t)= \int_{\mathbb{R}^n}&E_{a,b_2}(x,t;\xi, \tau )f(\xi) d\xi \label{comparaison2}
\\
&-\int_\tau ^t \int_{\mathbb{R}^n} E_{a,b_2}(x,t;\xi,s )[b_1(\xi )-b_2(\xi)]u(\xi ,s)d\xi ds.\nonumber
\end{align}
Combining \eqref{comparaison1} and \eqref{comparaison2}, we get
\[
\int_{\mathbb{R}^n}E_{a,b_2} (x,t;\xi, \tau )f(\xi) d\xi \le \int_{\mathbb{R}^n}E_{a,b_1} (x,t;\xi, \tau )f(\xi) d\xi ,
\]
which yields in a straightforward manner the expected inequality.
\end{proof}

Consider, for $(a,b)\in \mathcal{D}(\lambda ,\kappa )$, the unbounded operator $A_{a,b}:L^2(\mathbb{R}^n)\rightarrow L^2(\mathbb{R}^n)$ defined
\[
A_{a,b}=-L_{a,b},\quad D(A_{a,b})=H^2(\mathbb{R}^n).
\]
It is well known that $A_{a,b}$ generates an analytic semigroup $e^{tA_{a,b}}$. Therefore in light of \cite[Theorem 4 on page 30, Theorem 18 on page 44 and the proof in the beginning of Section 1.4.2 on page 35]{AT} $k_{a,b}(t,x;\xi )$, the Schwarz kernel of $e^{tA_{a,b}}$, is H\"older continuous with respect to $x$ and $\xi$, satisfies 
\begin{equation}\label{ugb}
|k_{a,b}(t,x,\xi )|\le e^{-\delta t}\mathcal{E}_c(x-\xi ,t)
\end{equation}
and, for $|h|\le \sqrt{t}+|x-\xi|$,
\begin{align}
&|k_{a,b}(t,x+h,\xi )-k_{a,b}(t,x,\xi )|\le e^{-\delta t}\left( \frac{|h|}{\sqrt{t}+|x-\xi|}\right)^\eta \mathcal{E}_c(x-\xi ,t),\label{holder1}
\\
&|k_{a,b}(t,x,\xi +h)-k_{a,b}(t,x,\xi )|\le e^{-\delta t} \left( \frac{|h|}{\sqrt{t}+|x-\xi|}\right)^\eta \mathcal{E}_c(x-\xi ,t),\label{holder2}
\end{align}
where $c=c(n,\lambda,\kappa)>0$ and $\delta=\delta(n,\lambda,\kappa) >0$  and $\eta>0$ are constants.

From the uniqueness of solutions of the Cauchy problem
\begin{equation}\label{CP}
u'(t)=A_{a,b}u(t),\; t>0,\quad u(0)=f\in C_0^\infty (\mathbb{R}^n),
\end{equation}
we deduce in a straightforward manner that $k_{a,b}(t,x;\xi )=E_{a,b}(x,t;\xi ,0)$.

Prior to giving the construction of the fundamental solution for the variable coefficients operators, we state a result for operators with constant coefficients. This result is proved in Appendix \ref{appendixA}.

\begin{lemma}\label{lemma-tse}
Let $\mu >0$ and $\nu >0$ be two constants. Then the fundamental solution for the operator $-\mu \Delta +\nu$ is given by $G_{\mu ,\nu}(x,\xi )=\mathcal{G}_{\mu ,\nu}(x-\xi)$, $x,\xi \in \mathbb{R}^n$, with
\[
\mathcal{G}_{\mu ,\nu}(x)= (2\pi \mu)^{-n/2}(\sqrt{\nu \mu}/|x|)^{n/2-1}K_{n/2-1}(\sqrt{\nu}|x|/\sqrt{\mu}),\quad x\in \mathbb{R}^n.
\]
Here $K_{n/2-1}$ is the usual modified Bessel function of second kind. Moreover the following two-sided inequality holds
\begin{equation}\label{tse}
C^{-1}\frac{e^{-\sqrt{\nu}|x|/\sqrt{\mu}}}{|x|^{n-2}}\le \mathcal{G}_{\mu ,\nu}(x) \le C\frac{e^{-\sqrt{\nu}|x|/(2\sqrt{\mu})}}{|x|^{n-2}},\quad x\in \mathbb{R}^n,
\end{equation}
for some constant $C=C(n,\mu,\nu)>1$.
\end{lemma}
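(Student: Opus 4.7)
The plan is to derive the explicit formula by heat-kernel subordination, recognize the resulting integral as the Macdonald integral representation of $K_{n/2-1}$, and then deduce the two-sided bound from the classical asymptotics of the modified Bessel function of the second kind.

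First I would use the subordination identity: since $-\mu\Delta+\nu$ is the negative generator of the analytic semigroup $e^{-\nu t}e^{t\mu\Delta}$, its inverse is
\[
\mathcal{G}_{\mu,\nu}(y)=\int_{0}^{\infty}e^{-\nu t}\,\frac{1}{(4\pi\mu t)^{n/2}}\,e^{-|y|^{2}/(4\mu t)}\,dt ,
\]
the distributional identity $(-\mu\Delta+\nu)\mathcal{G}_{\mu,\nu}=\delta_{0}$ being obtained by writing the factor $(-\mu\Delta+\nu)$ under the integral as $(-\partial_{t}+\nu)$ applied to the heat kernel and integrating by parts in $t$, the boundary contribution at $t\downarrow 0$ reproducing $\delta_{0}$ and the one at $t\to\infty$ vanishing because $\nu>0$. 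This computation simultaneously confirms existence and uniqueness of a tempered fundamental solution for this constant-coefficient operator.

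Next, in the integral above I would perform the change of variable $s=\nu t$, which yields after reorganization
\[
\mathcal{G}_{\mu,\nu}(y)=\frac{1}{\nu}\Bigl(\frac{\nu}{4\pi\mu}\Bigr)^{\!n/2}\int_{0}^{\infty}s^{-n/2}\exp\!\Bigl(-s-\frac{z^{2}}{4s}\Bigr)ds,\qquad z=\sqrt{\nu}\,|y|/\sqrt{\mu}.
\]
The Macdonald integral representation
\[
K_{\sigma}(z)=\tfrac{1}{2}(z/2)^{\sigma}\int_{0}^{\infty}\tau^{-\sigma-1}\exp\!\Bigl(-\tau-\frac{z^{2}}{4\tau}\Bigr)d\tau,
\]
applied with $\sigma=n/2-1$, turns the last integral into $2(2/z)^{n/2-1}K_{n/2-1}(z)$. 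Substituting this back and simplifying the prefactor gives exactly the stated closed form for $\mathcal{G}_{\mu,\nu}$.

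For the two-sided inequality \eqref{tse}, I would rely on the well-known asymptotics
\[
K_{\sigma}(z)\sim \tfrac{1}{2}\Gamma(\sigma)(z/2)^{-\sigma}\ \ (z\to 0^{+}),\qquad K_{\sigma}(z)\sim \sqrt{\pi/(2z)}\,e^{-z}\ \ (z\to+\infty),
\]
together with the fact that $K_{\sigma}$ is smooth and strictly positive on $(0,\infty)$, to establish the uniform bounds
\[
c\,z^{-(n/2-1)}e^{-z}\ \le\ K_{n/2-1}(z)\ \le\ C\,z^{-(n/2-1)}e^{-z/2},\qquad z>0,
\]
for some constants $c,C$ depending only on $n$ (here $n\ge 3$ ensures $n/2-1\ge 1/2$, which is what makes the second inequality absorb the $z^{-1/2}$ from the large-$z$ tail). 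Injecting these into the closed-form expression, the prefactor $(\sqrt{\nu\mu}/|y|)^{n/2-1}$ cancels the $z^{-(n/2-1)}$ factor and leaves precisely $|y|^{-(n-2)}$ times $e^{-\sqrt{\nu}|y|/\sqrt{\mu}}$ (lower bound) or $e^{-\sqrt{\nu}|y|/(2\sqrt{\mu})}$ (upper bound), which is \eqref{tse}. The only mildly delicate point is to patch the near-zero and near-infinity asymptotics into a uniform estimate on all of $(0,\infty)$, but this is a standard compactness argument on any intermediate interval $[z_{0},z_{1}]$ where $K_{n/2-1}$ is bounded above and below by positive constants.
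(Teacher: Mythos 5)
Your proposal is correct but reaches the explicit Bessel formula by a genuinely different route than the paper. The paper takes the $\mu=1$ fundamental solution $\mathcal{G}_{1,\nu}$ as a known classical fact and then obtains $\mathcal{G}_{\mu,\nu}$ by a scaling argument: if $(-\mu\Delta+\nu)u=f$ then $v(x)=u(\sqrt{\mu}\,x)$ solves $(-\Delta+\nu)v=f(\sqrt{\mu}\,\cdot)$, and comparing convolution representations yields $G_{\mu,\nu}(x,\xi)=\mu^{-n/2}\mathcal{G}_{1,\nu}((x-\xi)/\sqrt{\mu})$. You instead derive the closed form from scratch by Laplace transform of the heat kernel, $\mathcal{G}_{\mu,\nu}(y)=\int_0^\infty e^{-\nu t}(4\pi\mu t)^{-n/2}e^{-|y|^2/(4\mu t)}\,dt$, followed by the Macdonald integral representation of $K_{n/2-1}$. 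Your derivation is more self-contained (it does not presuppose the $\mu=1$ formula), and it has the methodological virtue of being exactly the same mechanism the paper uses for the variable-coefficient operator in Theorem~\ref{fundamental-solution}, where $G_{a,b}(x,\xi)=\int_0^\infty k_{a,b}(t,x,\xi)\,dt$; in this sense your proof dovetails more naturally with the surrounding text. For the two-sided inequality \eqref{tse}, your argument (small- and large-argument asymptotics of $K_{n/2-1}$, patched across a compact middle interval on which $K_{n/2-1}$ is bounded above and below by positive constants) coincides with the paper's. One small remark on your parenthetical: the hypothesis $n\ge 3$, hence $n/2-1\ge 1/2$, is indeed what makes the \emph{lower} bound $c\,z^{-(n/2-1)}e^{-z}\le K_{n/2-1}(z)$ survive the large-$z$ regime (where $K\sim\sqrt{\pi/(2z)}\,e^{-z}$), since one needs $z^{-(n/2-1)}\lesssim z^{-1/2}$; the upper bound with the relaxed exponential $e^{-z/2}$ holds at large $z$ for any $n$ because $z^{(n-3)/2}e^{-z/2}$ is bounded. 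The condition $n\ge 3$ is also what guarantees that $K_{n/2-1}(z)\sim\tfrac12\Gamma(n/2-1)(z/2)^{-(n/2-1)}$ as $z\to 0^+$ has a genuine power blow-up rather than the logarithmic singularity that appears at $\sigma=0$.
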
 

The main result of this section is the following theorem.

\begin{theorem}\label{fundamental-solution}
Let $(a,b)\in \mathcal{D}(\lambda, \kappa)$. Then there exists a unique function $G_{a,b}$ satisfying $G_{a,b}(\cdot ,\xi )\in C(\mathbb{R}^n\setminus\{\xi\})$, $\xi \in \mathbb{R}^n$, $G_{a,b}(x, \cdot )\in C(\mathbb{R}^n\setminus\{x\})$, $x \in \mathbb{R}^n$, and
\\
(i) $L_{a,b} G_{a,b}(\cdot ,\xi )=0$ in $\mathscr{D}'(\mathbb{R}^n\setminus\{\xi\})$, $\xi \in \mathbb{R}^n$,
\\
(ii) for any $f\in C_0^\infty (\mathbb{R}^n )$, 
\[
u(x)=\int_{\mathbb{R}^n}G_{a,b}(x,\xi)f(\xi )d\xi
\]
belongs to $H^2(\mathbb{R}^n)$ and it is the unique solution of $L_{a,b} u=f$,
\\
(iii) there exist two constants $c=c(n,\lambda)>1$ and $C=C(n,\lambda,\kappa)>1$  so that
\bean \label{maineqq}
C^{-1}\frac{ e^{-2\sqrt{c \kappa}|x-\xi|}}{|x-\xi|^{n-2}}\le G_{a,b}(x,\xi) 
\le C \frac{e^{-\frac{|x-\xi|}{\sqrt{c\kappa}}}}{|x-\xi |^{n-2}}.
\eean
\end{theorem}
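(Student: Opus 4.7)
The plan is to define the fundamental solution by time-integrating the heat kernel of $A_{a,b}$,
\[
G_{a,b}(x,\xi) := \int_0^\infty E_{a,b}(x,t;\xi,0)\,dt = \int_0^\infty k_{a,b}(t,x,\xi)\,dt,
\]
so that $G_{a,b}$ becomes the Schwartz kernel of the resolvent $(-A_{a,b})^{-1}$. First I would check convergence and off-diagonal regularity: the Gaussian bound \eqref{ugb} dominates the integrand by $e^{-\delta t}\mathcal{E}_c(x-\xi,t)$, which is integrable on $(0,\infty)$ for every fixed $x\neq\xi$ since the factor $e^{-c|x-\xi|^2/t}$ tames the $t=0^+$ singularity and $e^{-\delta t}$ controls the tail. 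The H\"older estimates \eqref{holder1}--\eqref{holder2} propagate through the integral and deliver joint continuity of $G_{a,b}$ off the diagonal, and symmetry $G_{a,b}(x,\xi)=G_{a,b}(\xi,x)$ follows from the formal self-adjointness of $L_{a,b}$.

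For (ii), the coercivity $\langle L_{a,b}u,u\rangle_{L^2}\ge \kappa^{-1}\|u\|_{L^2}^2$ means that $A_{a,b}$ is negative self-adjoint with spectral gap of order $\kappa^{-1}$, so $\|e^{tA_{a,b}}\|_{L^2\to L^2}\le e^{-t/\kappa}$. By Fubini,
\[
\int_{\mathbb{R}^n} G_{a,b}(x,\xi) f(\xi)\,d\xi \;=\; \int_0^\infty (e^{tA_{a,b}}f)(x)\,dt \;=\; (-A_{a,b})^{-1}f(x),
\]
which is the unique variational solution of $L_{a,b}u=f$ and, thanks to $a\in C^{1,1}$ and $f\in L^2$, lies in $H^2(\mathbb{R}^n)$ by standard elliptic regularity. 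Property (i) then drops out by duality: self-adjointness combined with (ii) gives $\int G_{a,b}(x,\xi) L_{a,b}\varphi(x)\,dx=\varphi(\xi)$ for every $\varphi\in C_0^\infty(\mathbb{R}^n)$ and every $\xi$, i.e., $L_{a,b}G_{a,b}(\cdot,\xi)=\delta_\xi$ distributionally, which reduces to the homogeneous equation on $\mathbb{R}^n\setminus\{\xi\}$.

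The crux is (iii). For the upper bound, \eqref{ugb} and the identity \eqref{ee} give
\[
G_{a,b}(x,\xi) \;\le\; \int_0^\infty e^{-\delta t}\mathcal{E}_c(x-\xi,t)\,dt \;=\; C_n\,\mathcal{G}_{c/4,\,\delta}(x-\xi),
\]
and the upper half of \eqref{tse} in Lemma \ref{lemma-tse} turns this into $C|x-\xi|^{2-n}\exp(-\sqrt{\delta/c}\,|x-\xi|)$, matching the stated exponent $|x-\xi|/\sqrt{c\kappa}$ once $\delta$ is chosen of size $\kappa^{-1}$ (which is exactly what the coercivity above yields). For the lower bound, the comparison principle of Lemma \ref{comparaison-lemma} gives $E_{a,b}\ge E_{a,\kappa}$, and the gauge transformation $w=e^{\kappa t}u$ reduces the constant-absorption case to pure diffusion, so
\[
E_{a,b}(x,t;\xi,0) \;\ge\; E_{a,\kappa}(x,t;\xi,0) \;=\; e^{-\kappa t}E_{a,0}(x,t;\xi,0) \;\ge\; e^{-\kappa t}\mathcal{E}_{c^{-1}}(x-\xi,t)
\]
by Theorem \ref{theorem-FS}. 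Integrating in $t$ and using \eqref{ee} once more produces $G_{a,b}(x,\xi)\ge C_n'\,\mathcal{G}_{c^{-1}/4,\,\kappa}(x-\xi)$, and the lower half of \eqref{tse} delivers exactly $C^{-1}|x-\xi|^{2-n}e^{-2\sqrt{c\kappa}|x-\xi|}$.

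Uniqueness collapses to (ii): if $G$ and $\tilde G$ both satisfy the conclusions then $\int(G(x,\xi)-\tilde G(x,\xi))f(\xi)\,d\xi$ vanishes for every $f\in C_0^\infty$ by uniqueness of $H^2$ solutions of $L_{a,b}u=f$, and continuity off the diagonal forces $G=\tilde G$ everywhere outside the diagonal. The chief technical obstacle I anticipate is the book-keeping of constants: tracking them through two successive time integrations and through the Bessel-function bounds of Lemma \ref{lemma-tse} so that the exponents in (iii) come out in precisely the stated form, and in particular extracting from the arguments of \cite{AT} that the decay rate $\delta$ in \eqref{ugb} can be chosen commensurate with the lower bound $b\ge\kappa^{-1}$.
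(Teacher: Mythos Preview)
Your proposal is essentially correct and follows the same architecture as the paper: define $G_{a,b}$ as the time integral of the heat kernel, identify it with the Schwartz kernel of $(-A_{a,b})^{-1}$, read off continuity from \eqref{holder1}--\eqref{holder2}, and obtain the two-sided bound (iii) by squeezing between constant-coefficient fundamental solutions via Lemma~\ref{lemma-tse}.

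There is one genuine difference in how the \emph{upper} half of (iii) is obtained. You plan to integrate the heat-kernel bound \eqref{ugb} directly and then argue that the decay rate $\delta$ supplied by \cite{AT} can be taken of order $\kappa^{-1}$; you correctly flag this as the chief obstacle. The paper sidesteps this entirely by using the comparison principle of Lemma~\ref{comparaison-lemma} \emph{in both directions}: exactly as $E_{a,b}\ge E_{a,\kappa}=e^{-\kappa t}E_{a,0}$ yields the lower bound, the companion inequality $E_{a,b}\le E_{a,\kappa^{-1}}=e^{-\kappa^{-1}t}E_{a,0}$, combined with the Gaussian upper bound on $E_{a,0}$ from Theorem~\ref{theorem-FS} (whose constant $c$ depends only on $n$ and $\lambda$), produces after integration $G_{a,b}\le C\,G_{c/4,\kappa^{-1}}$ with the stated exponent on the nose. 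In the paper, the bound \eqref{ugb} is used only for off-diagonal continuity and the Fubini justification, not for (iii); so the constant-tracking through \cite{AT} that worried you never arises. For (i), the paper approximates $\delta_\xi$ by smooth functions and passes to the weak $L^2_{\rm loc}$ limit rather than invoking self-adjointness, but your duality argument is equally valid.
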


\begin{proof}
Pick $s\ge 1$ arbitrary and let $f\in C_0^\infty (\mathbb{R}^n )$. Applying H\"older's inequality, we find
\[
\int_{\mathbb{R}^n}k_{a,b}(t,x,\xi)|f(\xi)|d\xi \le \|k_{a,b}(t,x,\cdot )\|_{L^s(\mathbb{R}^n)}\|f\|_{L^{s'}(\mathbb{R}^n)},
\]
where $s'$ is the conjugate exponent of $s$.

But, according to \eqref{ugb},
\[
\|k_{a,b}(t,x,\cdot )\|_{L^s(\mathbb{R}^n)}^s\le \left(\frac{c}{t^{n/2}}\right)^s\int_{\mathbb{R}^n}e^{-\frac{s|x-\xi |^2}{ct}}d\xi .
\]
Next, making the change of variable $\xi =(\sqrt{ct/s})\eta +x$, we get 
\[
\|k_{a,b}(t,x,\cdot )\|_{L^s(\mathbb{R}^n)}^s\le \left(\frac{c}{t^{n/2}}\right)^s\left(\frac{ct}{s}\right)^{n/2}\int_{\mathbb{R}^n}e^{-|\eta |^2}d\eta  .
\]
Hence
\[
\|k_{a,b}(t,x,\cdot )\|_{L^s(\mathbb{R}^n)}\le t^{n(1/s-1)/2}C_s,
\] 
with 
\[
C_s=c\left(\frac{c}{s}\right)^{n/2}\left(\int_{\mathbb{R}^n}e^{-|\eta |^2}d\eta\right)^{1/s}.
\]
We get, by choosing $1\le s< \frac{n}{n-2}<\tilde{s}$,
\begin{align*}
\int_0^{+\infty}&\int_{\mathbb{R}^n}k_{a,b}(t,x,\xi)|f(\xi)|d\xi dt
\\
&=\int_0^1\int_{\mathbb{R}^n}k_{a,b}(t,x,\xi)|f(\xi)|d\xi dt+\int_1^{+\infty}\int_{\mathbb{R}^n}k_{a,b}(t,x,\xi)|f(\xi)|d\xi dt
\\
&\le C_s\|f\|_{L^{s'}(\mathbb{R}^n)}\int_0^1t^{\frac{n}{2}(1/s-1)}dt+C_{\tilde{s}}\|f\|_{L^{\tilde{s}'}(\mathbb{R}^n)}\int_1^{+\infty}t^{\frac{n}{2}(1/\tilde{s}-1)}dt.
\end{align*}
In light of Fubini's theorem we obtain
\begin{equation}\label{permutation}
\int_0^{+\infty}\int_{\mathbb{R}^n}k_{a,b}(t,x,\xi)f(\xi)d\xi dt= \int_{\mathbb{R}^n}\left(\int_0^{+\infty}k_{a,b}(t,x,\xi)dt\right) f(\xi)d\xi .
\end{equation}

Define $G_{a,b}$ as follows
\[
G_{a,b}(x,\xi )=\int_0^{+\infty}k_{a,b}(t,x,\xi)dt.
\]
Then \eqref{permutation} takes the form
\begin{equation}\label{identity1}
\int_0^{+\infty}\int_{\mathbb{R}^n}k_{a,b}(t,x,\xi)f(\xi)d\xi dt=\int_{\mathbb{R}^n}G_{a,b}(x,\xi) f(\xi)d\xi .
\end{equation}
Noting that $A_{a,b}$ is invertible,  we obtain
\begin{align*}
-A_{a,b}^{-1}f(x)&=\left(\int_0^{+\infty}e^{tA_{a,b}}fdt\right)(x)
\\
&=\int_0^{+\infty}\int_{\mathbb{R}^n}k_{a,b}(t,x,\xi)f(\xi)d\xi dt,\quad  x\in \mathbb{R}^n.
\end{align*}
This and \eqref{identity1} entail
\[
-A_{a,b}^{-1}f(x)=\int_{\mathbb{R}^n}G_{a,b}(x,\xi) f(\xi)d\xi, \quad x\in \mathbb{R}^n.
\]
In other words, $u$ defined by 
\[
u(x)=\int_{\mathbb{R}^n}G_{a,b}(x,\xi) f(\xi)d\xi,\quad x\in \mathbb{R}^n,
\]
belongs to $H^2(\mathbb{R}^n)$ and satisfies $L_{a,b}u=f$.

Since, for $x\ne \xi$,
\[
\int_0^{+\infty}\frac{1}{t^{n/2}}e^{-\frac{|x-\xi|^2}{ct}}dt=\left(c^{n/2-1}\int_0^{+\infty}\tau^{n/2-2}e^{-\tau}d\tau\right)\frac{1}{|x-\xi|^{n-2}},
\]
we get in light of \eqref{holder1}
\[
|G_{a,b}(x+h,\xi )-G_{a,b}(x,\xi )|\le \frac{C}{|x-\xi |^{n-2+\eta}}|h|^\eta,\quad x\ne \xi,\; |h|\le |x-\xi|,
\]
where $C=C(n,\lambda,\kappa)$ is a constant. In particular, $G_{a,b}(\cdot ,\xi )\in C(\mathbb{R}^n\setminus\{\xi\})$. Similarly, using \eqref{holder2} instead of \eqref{holder1}, we obtain $G_{a,b}(x,\cdot )\in C(\mathbb{R}^n\setminus\{x\})$. More specifically we have
\begin{equation}\label{holder3}
|G_{a,b}(x,\xi +h)-G_{a,b}(x,\xi )|\le \frac{C}{|x-\xi |^{n-2+\eta}}|h|^\eta,\quad x\ne \xi,\; |h|\le |x-\xi|.
\end{equation}

Let $\xi \in \mathbb{R}^n$ and $\omega \Subset \mathbb{R}^n\setminus\{\xi\}$, and pick $g\in C_0^\infty (\omega)$. Then set
\[
w_{a,b}(y)=\int_\omega G_{a,b}(x,y )g(x)dx,\quad y \in B(\xi ,\mbox{dist}(\xi ,\overline{\omega})/2).
\]
It follows from \eqref{holder3} that, for $y \in B(\xi ,\mbox{dist}(\xi ,\overline{\omega}))$ and $|h|< \mbox{dist}(y ,\overline{\omega})$, we have
\[ 
|w_{a,b}(y +h)-w_{a,b}(y )|\le \frac{C}{\mbox{dist}(y ,\overline{\omega})^{n-2+\eta}}|h|^\eta.
\]
Therefore $w_{a,b} \in C(B(\xi ,\mbox{dist}(\xi ,\overline{\omega})/2)$.

Let $\mathcal{M}(\mathbb{R}^n)$ be the space of bounded measures on $\mathbb{R}^n$. Pick a sequence $(f_k)$ of a positive functions of $C_0^\infty(\mathbb{R}^n)$ converging in $\mathcal{M}(\mathbb{R}^n)$ to $\delta_\xi$ and let $u_k=-A_{a,b}^{-1}f_k$. In that case, according to Fubini's theorem, we have
\begin{align*}
\int_\omega u_k(x)g(x)dx&=\int_\omega\int_{\mathbb{R}^n}G_{a,b}(x,y)g(x) f_k(y )dydx
\\
&=\int_{\mathbb{R}^n}w_{a,b}(y )f_k(y )dy \longrightarrow w_{a,b}(\xi )=\int_\omega G_{a,b}(x,\xi )g(x)dx,
\end{align*}
where we used that  $\mbox{supp}f_k\subset B(\xi ,\mbox{dist}(\xi ,\overline{\omega})/2)$, provided that $k$ is sufficiently large. That is we proved that $u_k$ converges to $G_{a,b}(\cdot ,\xi)$ weakly in $L^2_{\mathrm loc}(\mathbb{R}^n\setminus\{\xi\})$  (think to the fact that $C_0^\infty (\omega)$ is dense in $L^2(\omega)$).

Now, as $L_{a,b} u_k=f_k$, we find $L_{a,b}G_{a,b}(\cdot ,\xi )=0$ in $\mathbb{R}^n\setminus\{\xi\}$ in the distributional sense.

The uniqueness  of $G_{a,b}$ follows from that of $u$ and, as $\kappa^{-1}\le b\le \kappa$, we deduce from Lemma \ref{comparaison-lemma} that
\[
E_{a, \kappa}(x, t, \xi, 0)\le E_{a,b}(x, t, \xi, 0)\le E_{a, \kappa^{-1}}(x, t, \xi, 0).
\]
But  a simple  change of variable shows that
\begin{equation} \label{ee65.1}
E_{a, \kappa^{-1}}(x, t, \xi, 0) = e^{-\kappa^{-1}t} E_{a, 0}(x, t, \xi, 0)
\end{equation}
and
\begin{equation} \label{ee65.2}
E_{a, \kappa}(x, t, \xi, 0) = e^{-\kappa t} E_{a, 0}(x, t, \xi, 0).
\end{equation}

Therefore, from  Theorem \ref{theorem-FS} and identity \eqref{ee}, there exists
a constant $c=c(n,\lambda)>1$   so that
\begin{align*}
e^{-\kappa t}\frac{(\pi c^{-1})^{n/2+1}}{\pi} E_{c^{-1}/4, 0}(x, t, \xi, 0)  \le E_{a,b}&(x, t, \xi, 0)
\\
&\le e^{-\kappa^{-1} t} \frac{(\pi c)^{n/2+1}}{\pi} 
E_{c/4, 0}(x, t, \xi, 0),
\end{align*}
which, combined  with identities \eqref{ee65.1}  and \eqref{ee65.2}, gives
\begin{align*}
 \frac{(\pi c^{-1})^{n/2+1}}{\pi}E_{c^{-1}/4, \kappa}(x, t, \xi, 0)  \le E_{a,b}(x, t, \xi, &0)
 \\
 &\le \frac{(\pi c)^{n/2+1}}{\pi}E_{c/4, \kappa^{-1}}(x, t, \xi, 0).
\end{align*}
From the uniqueness of $G_{a,b}$, we obtain by integrating over $(0, +\infty)$, with respect to $t$, each member of the above  inequalities  
\[
 \frac{(\pi c^{-1})^{n/2+1}}{\pi}G_{c^{-1}/4, \kappa}(x,\xi ) \le G_{a,b}(x,\xi ) \le \frac{(\pi c)^{n/2+1}}{\pi}G_{c/4, \kappa^{-1}}(x,\xi ).
\] 
These two-sided inequalities together with \eqref{tse} yield in a straightforward manner \eqref{maineqq}. 
\end{proof}

The function $G_{a,b}$ given by the previous theorem is usually called a fundamental solution of the operator 
$L_{a,b}$.

\subsection{Regularity of fundamental solutions}\label{sub2.2}

Let $\xi \in \mathbb{R}^n$ and $\mathcal{O}\Subset \mathcal{O}'\Subset \mathbb{R}^n\setminus\{\xi\}$ with $\mathcal{O}'$ of class $C^{1,1}$. As $G_{a,b}(\cdot ,\xi )\in C(\partial \mathcal{O}')$, we get from \cite[Theorem 6.18, page 106]{GT} (interior H\"older regularity) that $G_{a,b}(\cdot ,\xi )$  belongs to $C^{2,\alpha}(\overline{\mathcal{O}})$.

\begin{proposition}\label{propositionlr1}
There exist $C=C(n,\lambda ,\kappa ,\alpha)$ and $\nu =\nu (\alpha)>2$ so that, for any $\xi\in \mathbb{R}^n$ and $\mathcal{O}\Subset \mathbb{R}^n\setminus\{\xi\}$, we have
\begin{equation}\label{lr1}
\|G_{a,b}(\cdot ,\xi )\|_{C^{2,\alpha}(\overline{\mathcal{O}})}\le C\Lambda (\mathbf{d}+\varrho)^\nu \max\left(\varrho^{-(2+\alpha)},1\right)\varrho^{-n+2}.
\end{equation}
Here $\varrho = \mbox{dist}\left(\xi,\overline{\mathcal{O}}\right)$, $\mathbf{d}=\mbox{diam}(\mathcal{O})$ and
\[
\Lambda (h)=[1+2h+2h^2+h^3]\lambda,\quad h>0.
\]
\end{proposition}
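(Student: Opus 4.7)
The plan is to derive \eqref{lr1} from the pointwise upper bound in Theorem \ref{fundamental-solution}(iii) together with an interior Schauder estimate applied to the homogeneous equation $L_{a,b}G_{a,b}(\cdot,\xi)=0$ in $\mathbb{R}^n\setminus\{\xi\}$. First, I will fix an intermediate open set $\mathcal{O}'=\{x\in\mathbb{R}^n : \mathrm{dist}(x,\mathcal{O})<\varrho/3\}$, so that $\mathrm{dist}(\xi,\overline{\mathcal{O}'})\ge 2\varrho/3$, $\mathrm{diam}(\mathcal{O}')\le \mathbf{d}+\varrho$, and $\mathrm{dist}(\overline{\mathcal{O}},\partial\mathcal{O}')=\varrho/3$. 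Applying \eqref{maineqq} at every $x\in\overline{\mathcal{O}'}$, where $|x-\xi|\ge 2\varrho/3$, then immediately yields
\[
\|G_{a,b}(\cdot,\xi)\|_{L^\infty(\mathcal{O}')}\le C_0\,\varrho^{-(n-2)},
\]
with $C_0=C_0(n,\lambda,\kappa)$.

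Next, I will rewrite the equation in non-divergence form as $-a\Delta u-\nabla a\cdot\nabla u+bu=0$ and apply the classical interior Schauder estimate (in the form of, e.g., \cite[Theorem~6.2]{GT}). On a domain of diameter $D$, the $C^{1,1}$ bound on $a$ and $C^{0,1}$ bound on $b$ yield, by elementary interpolation between Lipschitz and $L^\infty$ bounds, a Hölder norm estimate of the form
\[
\|a\|_{C^{1,\alpha}}+\|b\|_{C^{0,\alpha}}\le C\,\Lambda(D),
\]
where the various cross products of $D$, $D^\alpha$, $D^{1+\alpha}$, $D^2$, $D^{2+\alpha}$ arising from the conversion of Lipschitz seminorms into $C^{0,\alpha}$ seminorms on a set of diameter $D$ are precisely what is packaged into the definition of $\Lambda(h)$; here one takes $D=\mathbf{d}+\varrho$. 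I will then cover $\overline{\mathcal{O}}$ by a finite family of balls of radius $r=\min(\varrho/6,1)$ contained in $\mathcal{O}'$, rescale each such ball to the unit ball and invoke the $C^{2,\alpha}$ Schauder bound on the rescaled equation. Since the Schauder constant depends polynomially (of some degree $\varkappa=\varkappa(\alpha)>2$) on $\|a\|_{C^{1,\alpha}}+\|b\|_{C^{0,\alpha}}$, this gives, on each ball,
\[
\|G_{a,b}(\cdot,\xi)\|_{C^{2,\alpha}}\le C\,\Lambda(\mathbf{d}+\varrho)^{\varkappa}\,r^{-(2+\alpha)}\,\|G_{a,b}(\cdot,\xi)\|_{L^\infty}.
\]
Patching through the finite cover and substituting the sup-norm estimate from the first step then produces \eqref{lr1}: the factor $\max(\varrho^{-(2+\alpha)},1)$ simply records the two regimes $r=\varrho/6$ and $r=1$, while $\varrho^{-(n-2)}$ is the sup-norm bound.

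The main obstacle will be the bookkeeping: one must make explicit how the $C^{0,\alpha}$ and $C^{1,\alpha}$ norms of the coefficients on the intermediate domain of diameter $\mathbf{d}+\varrho$ enter the Schauder constant, both through the rescaling step and through the conversion of the global Lipschitz bounds into Hölder norms on a bounded set. No conceptual difficulty beyond this quantitative tracking of constants and scales is expected, but the precise form of $\Lambda$ and the exact value of $\varkappa$ emerge only from carrying out each of these steps in an explicit manner.
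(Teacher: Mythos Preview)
Your proposal is correct and follows essentially the same approach as the paper: enlarge $\mathcal{O}$ to an intermediate domain at distance $\sim\varrho$ from $\xi$, apply interior Schauder estimates for $L_{a,b}$ with explicit tracking of the coefficient norms (producing the factors $\Lambda(\mathbf{d}+\varrho)^{\varkappa}$ and $\max(\varrho^{-(2+\alpha)},1)$), and then feed in the pointwise bound \eqref{maineqq} to control the sup norm by $C\varrho^{-(n-2)}$. The only cosmetic difference is that the paper packages the Schauder step into a separate Lemma~\ref{lemmaA1}, proved via the weighted H\"older norms $|\cdot|^{\ast}_{k,\alpha;\mathcal{Q}}$ of \cite[Theorem~6.2]{GT} rather than via your ball-covering and rescaling; both routes yield the same $\delta^{-(2+\alpha)}$ dependence and the same polynomial power $\varkappa(\alpha)$ in the coefficient norm.
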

The proof of this proposition is based the following lemma consisting in an adaptation of the usual interior Schauder estimates. The proof of this technical lemma will be given in Appendix A.

\begin{lemma}\label{lemmaA1}
There exists two constants $C=C(n,\alpha)$ and $\nu =\nu (\alpha )>1$ with the property that, for any bounded subset $\mathcal{Q}$ of $\mathbb{R}^n$, $\delta >0$ so that $\mathcal{Q}_\delta =\{x\in \mathcal{Q};\; \mbox{dist}(x,\partial \mathcal{Q})>\delta\}\ne \emptyset$, $w\in C^{2,\alpha}(\mathcal{Q})\cap C\left(\overline{\mathcal{Q}}\right)$ satisfying $L_{a,b}w=0$ in $\mathcal{Q}$ and $\mathcal{Q}'\subset \mathcal{Q}_\delta$, we have
\begin{equation}\label{A6}
\|w\|_{C^{2,\alpha}\left(\overline{\mathcal{Q}'}\right)} \le C\max\left(\delta^{-(2+\alpha)},1\right)\Lambda (\mathbf{d})^\nu \|w\|_{C\left(\overline{\mathcal{Q}}\right)},
\end{equation}
where $\Lambda$ is as in Proposition \ref{propositionlr1} and $\mathbf{d}=\mbox{diam}(\mathcal{Q})$.
\end{lemma}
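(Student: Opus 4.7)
The statement is a classical interior Schauder estimate for $L_{a,b}$, adapted to keep track of the constants' dependence on $\lambda$, $\kappa$, the diameter $\mathbf{d}=\mathrm{diam}(\mathcal{Q})$, and the distance $\delta$. The plan is: (i) recast the equation in non-divergence form; (ii) cover $\overline{\mathcal{Q}'}$ by small balls lying well inside $\mathcal{Q}$; (iii) apply a scaled version of the standard interior Schauder estimate on each ball; and (iv) sum up, carefully tracking all constants.

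Since $a \in C^{1,1}(\mathbb{R}^n)$ with $a \ge 1/\lambda$, dividing $-\mathrm{div}(a\nabla w) + b w = 0$ by $a$ rewrites the equation as $\Delta w + B \cdot \nabla w - c\,w = 0$ with $B := \nabla a / a$ and $c := b/a$. Both $B$ and $c$ belong to $C^{0,1}(\mathbb{R}^n)$ with norms polynomial in $\lambda, \kappa$. The elementary interpolation
\[
[f]_{C^{0,\alpha}(\overline{\mathcal{Q}})} \le \mathbf{d}^{1-\alpha}\,[f]_{C^{0,1}},
\]
together with the $L^\infty$ part, bounds the $C^{0,\alpha}(\overline{\mathcal{Q}})$ norms of the non-divergence-form coefficients (and the $C^{1,\alpha}$ norm of $a$) by an expression of the form $\Lambda(\mathbf{d})$. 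The particular form $[1 + \mathbf{d}(1+\mathbf{d})(1+\mathbf{d}^\alpha)]\lambda$ is chosen exactly so as to dominate the various products of such norms that will enter the Schauder constant in both regimes $\mathbf{d}\le 1$ and $\mathbf{d}\ge 1$.

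Next, cover $\overline{\mathcal{Q}'}$ by finitely many balls $B(x_i, r)$ with $r := \min(\delta/2, 1)$, so that each $B(x_i, 2r) \subset \mathcal{Q}$. On each such ball, set $\tilde w(y) := w(x_i + r y)$ for $y \in B(0,1)$; then $\tilde w$ satisfies $\Delta_y \tilde w + r\,\tilde B \cdot \nabla_y \tilde w - r^2\,\tilde c\, \tilde w = 0$ with $\tilde B(y) := B(x_i + ry)$ and $\tilde c(y) := c(x_i + ry)$, whose $C^{0,\alpha}(\overline{B(0,1)})$ norms (together with the factors $r, r^2 \le 1$) are bounded by $\Lambda(\mathbf{d})$. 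The classical interior Schauder estimate applied to $\tilde w$ on $B(0, 1/2)$ then yields
\[
\|\tilde w\|_{C^{2,\alpha}(\overline{B(0,1/2)})} \le C\, \Lambda(\mathbf{d})^{\varkappa} \, \|\tilde w\|_{C(\overline{B(0,1)})},
\]
where $\varkappa = \varkappa(\alpha) > 1$ reflects the polynomial dependence of the Schauder constant on the coefficients' Hölder norms. Undoing the dilation multiplies the $C^{2,\alpha}$ seminorm by $r^{-(2+\alpha)}$ (with strictly smaller $r^{-j}$ factors for lower-order pieces, absorbed since $r \le 1$), while the sup norm is preserved, giving
\[
\|w\|_{C^{2,\alpha}(\overline{B(x_i, r/2)})} \le C\, r^{-(2+\alpha)}\, \Lambda(\mathbf{d})^{\varkappa} \, \|w\|_{C(\overline{\mathcal{Q}})}.
\]
Since $r = \min(\delta/2, 1)$ one has $r^{-(2+\alpha)} \asymp \max(\delta^{-(2+\alpha)}, 1)$, and taking the supremum over the finite cover of $\overline{\mathcal{Q}'}$ yields \eqref{A6}.

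The main obstacle is the bookkeeping: one must confirm that the Schauder constant's dependence on the coefficients' $C^{0,\alpha}$ norms is polynomial of some explicit degree $\varkappa(\alpha)$, so that the final bound fits exactly the form $\max(\delta^{-(2+\alpha)},1)\,\Lambda(\mathbf{d})^\varkappa$. A secondary but conceptually important point is that both $\Lambda$ and the factor $\max(\delta^{-(2+\alpha)},1)$ are crafted to handle the small and large regimes of $\mathbf{d}$ and $\delta$ uniformly, which is essential for the subsequent application of Lemma \ref{lemmaA1} to the fundamental solution in Proposition \ref{propositionlr1}.
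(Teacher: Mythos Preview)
Your proof is correct but proceeds differently from the paper. The paper works with the weighted interior H\"older norms $|w|_{k,\gamma;\mathcal{Q}}^\ast$ of Gilbarg--Trudinger (carrying the weights $d_x^k$, $d_x=\mathrm{dist}(x,\partial\mathcal{Q})$): it first bounds the weighted coefficient norms $|a|_{0,\alpha}^{(0)}+|\partial_j a|_{0,\alpha}^{(1)}+|b|_{0,\alpha}^{(2)}$ by $\Lambda(\mathbf{d})$, then re-examines the proof of \cite[Theorem~6.2]{GT} together with the interpolation inequalities \cite[Lemma~6.32]{GT} to obtain
\[
[w]_{2,\alpha;\mathcal{Q}}^\ast \le C\,\Lambda(\mathbf{d})\bigl(\mu^{-\kappa}|w|_{0;\mathcal{Q}}+\mu^\alpha [w]_{2,\alpha;\mathcal{Q}}^\ast\bigr),\qquad 0<\mu\le 1/2,
\]
and chooses $\mu=(2C\Lambda(\mathbf{d}))^{-1/\alpha}$ to absorb the last term, yielding $|w|_{2,\alpha;\mathcal{Q}}^\ast\le C\Lambda(\mathbf{d})^\varkappa|w|_{0;\mathcal{Q}}$; the unweighted estimate on $\mathcal{Q}'$ then drops out since $d_x\ge\delta$ there. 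Your rescaling-and-covering argument is more hands-on and avoids the weighted-norm formalism, at the cost of one routine step you left implicit (patching the H\"older seminorm for pairs $x,y$ lying in different balls, handled by splitting into the cases $|x-y|\gtrless r/4$). The paper's route has the advantage that the exponent $\varkappa$ and the polynomial dependence on $\Lambda(\mathbf{d})$ emerge explicitly from the absorption step, whereas you must invoke this dependence of the Schauder constant as a known fact.
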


\begin{proof}[Proof of Proposition \ref{propositionlr1}]
We get, by applying Lemma \ref{lemmaA1} with $\mathcal{Q}'=\mathcal{O}$, $\delta =\varrho/2$ and $\mathcal{Q}=\left\{x\in \mathbb{R}^n;\; \mbox{dist}\left(x, \overline{\mathcal{O}}\right)<\varrho/2\right\}$, 
\[
\|G_{a,b}(\cdot ,\xi )\|_{C^{2,\alpha}(\overline{\mathcal{O}})}\le C\Lambda (\mathbf{d}+\varrho)^\nu \max\left(\delta^{-(2+\alpha)},1\right)\|G_{a,b}(\cdot ,\xi )\|_{C\left(\overline{\mathcal{Q}}\right)}.
\]
This and \eqref{maineqq} yield
\begin{equation}\label{A7.0}
\|G_{a,b}(\cdot ,\xi )\|_{C^{2,\alpha}(\overline{\mathcal{O}})}\le C\Lambda (\mathbf{d}+\varrho)^\nu \max\left(\delta^{-(2+\alpha)},1\right)\varrho^{-n+2}e^{-\varrho/\sqrt{c\kappa}},
\end{equation}
with $C=C(n,\lambda ,\kappa ,\alpha)$ and $c=c(n,\lambda)$. It is then clear that \eqref{A7.0} implies \eqref{lr1}.
\end{proof}

The preceding proposition together with Lemma \ref{lemmaB2} enable us to state the following corollary.

\begin{corollary}\label{corollaryq0}
There exist $C=C(n,\lambda ,\kappa ,\alpha,\theta)$ and $\nu =\nu (\alpha)>1$ so that, for any $\xi\in \mathbb{R}^n$ and $\mathcal{O}\Subset \mathbb{R}^n\setminus\{\xi\}$, we have
\begin{align}
&\|G_{a,b}(\cdot ,\xi )\|_{H^{2+\theta}(\mathcal{O})}\label{lr1.0}
\\
&\qquad \le C\Lambda (\mathbf{d}+\varrho)^\nu\max\left(\mathbf{d}^{n/2},\mathbf{d}^{n/2+\alpha -\theta}\right) \max\left(\varrho^{-(2+\alpha)},1\right)\varrho^{-n+2},\nonumber
\end{align}
where $\varrho = \mathrm{dist}\left(\xi,\overline{\mathcal{O}}\right)$, $\mathbf{d}=\mathrm{diam}(\mathcal{O})$.
\end{corollary}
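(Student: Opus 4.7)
The plan is simply to combine the Hölder estimate from Proposition \ref{propositionlr1} with the auxiliary embedding Lemma \ref{lemmaB2}, the only subtle point being that all constants must track the diameter $\mathbf{d}$ of $\mathcal{O}$ explicitly. Since $\theta<\alpha$, one has a continuous embedding $C^{2,\alpha}(\overline{\mathcal{O}})\hookrightarrow H^{2+\theta}(\mathcal{O})$ on any bounded domain, and Lemma \ref{lemmaB2} should provide the scaled version
\[
\|w\|_{H^{2+\theta}(\mathcal{O})}\le C\max\bigl(\mathbf{d}^{n/2},\mathbf{d}^{n/2+\alpha-\theta}\bigr)\,\|w\|_{C^{2,\alpha}(\overline{\mathcal{O}})}.
\]
Applied to $w=G_{a,b}(\cdot,\xi)$ and combined with \eqref{lr1}, this directly yields \eqref{lr1.0} after absorbing $\theta$ into the constant $C=C(n,\lambda,\kappa,\alpha,\theta)$.

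The two $\mathbf{d}$-powers are natural and should be read as follows. For the integer-order part, i.e.\ the $L^2$-norms of $w$, $\nabla w$ and $D^2w$, one simply uses $\|\cdot\|_{L^2(\mathcal{O})}\le|\mathcal{O}|^{1/2}\|\cdot\|_{L^\infty(\mathcal{O})}\le C\mathbf{d}^{n/2}\|\cdot\|_{L^\infty(\mathcal{O})}$, and $\|w\|_{L^\infty(\mathcal{O})}$, $\|\nabla w\|_{L^\infty(\mathcal{O})}$, $\|D^2w\|_{L^\infty(\mathcal{O})}$ are all controlled by $\|w\|_{C^{2,\alpha}(\overline{\mathcal{O}})}$. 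This produces the $\mathbf{d}^{n/2}$ factor. For the fractional Gagliardo seminorm of $D^2w$, the computation
\[
[D^2 w]_{H^\theta(\mathcal{O})}^2=\iint_{\mathcal{O}\times\mathcal{O}}\frac{|D^2w(x)-D^2w(y)|^2}{|x-y|^{n+2\theta}}\,dx\,dy\le [D^2w]_\alpha^2\iint_{\mathcal{O}\times\mathcal{O}}|x-y|^{2(\alpha-\theta)-n}\,dx\,dy
\]
yields, after integration in polar coordinates (permitted since $\theta<\alpha$), a bound of order $\mathbf{d}^{n+2(\alpha-\theta)}$, hence the $\mathbf{d}^{n/2+\alpha-\theta}$ factor on the seminorm. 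Taking the maximum accommodates both the small-$\mathbf{d}$ and large-$\mathbf{d}$ regimes.

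Having this embedding at hand, the proof reduces to a one-line substitution: insert the bound \eqref{lr1} into the right-hand side of the scaled embedding inequality. The $\Lambda(\mathbf{d}+\varrho)^\varkappa$ and $\max(\varrho^{-(2+\alpha)},1)\varrho^{-n+2}$ factors from \eqref{lr1} survive unchanged and are multiplied by the diameter factor $\max(\mathbf{d}^{n/2},\mathbf{d}^{n/2+\alpha-\theta})$ coming from Lemma \ref{lemmaB2}. This yields exactly \eqref{lr1.0}, with the exponent $\varkappa$ inherited from Proposition \ref{propositionlr1} (possibly enlarged by $1$ to absorb a bounded correction, as the statement allows $\varkappa=\varkappa(\alpha)>1$).

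I do not foresee any real obstacle in this step, since both ingredients are already available: Proposition \ref{propositionlr1} has been established, and Lemma \ref{lemmaB2} is a standard scaled embedding whose proof amounts to the polar-coordinate computation above. The only piece that requires mild care is making sure the constant in Lemma \ref{lemmaB2} has the stated dependence on $\mathbf{d}$, which is precisely what the present corollary needs in order to feed into the subsequent three-ball and frequency-function arguments of Section~3.
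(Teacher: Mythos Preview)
Your proposal is correct and matches the paper's own approach exactly: the corollary is stated as an immediate consequence of Proposition~\ref{propositionlr1} combined with the scaled embedding Lemma~\ref{lemmaB2}, and your explanation of the $\mathbf{d}^{n/2}$ and $\mathbf{d}^{n/2+\alpha-\theta}$ factors is precisely the content of the proof of Lemma~\ref{lemmaB2} given in Appendix~\ref{appendixA}.
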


\begin{corollary}\label{corollaryq1}
There exist $C=C(n,\lambda ,\kappa ,\alpha)$ and $c=c(n,\lambda,\kappa,\alpha )$  so that, for any $\xi_1,\xi_2\in \mathbb{R}^n$ and $\mathcal{O}\Subset \mathbb{R}^n \setminus\{\xi_1,\xi_2\}$, we have
\begin{equation}\label{A11.0}
\left\|\frac{G_{a,b}(\cdot ,\xi_2)}{G_{a,b}(\cdot ,\xi_1)}\right\|_{C^{2,\alpha}(\overline{\mathcal{O}})}\le Ce^{c(\mathbf{d}+\varrho_+)}\left(1+ \max\left(\varrho_-^{-(2+\alpha)},1\right)\varrho_-^{-n+2}\right)^4,
\end{equation}
where $\varrho_-=\min \left(\mathrm{dist}\left(\xi_1,\mathcal{O}\right),\mathrm{dist}\left(\xi_2,\mathcal{O}\right)\right)$ and $\varrho_+=\max \left(\mathrm{dist}\left(\xi_1,\mathcal{O}\right),\mathrm{dist}\left(\xi_2,\mathcal{O}\right)\right)$.
\end{corollary}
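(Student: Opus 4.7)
The plan is to combine the pointwise lower bound for $G_1:=G_{a,b}(\cdot,\xi_1)$ from Theorem~\ref{fundamental-solution} with the $C^{2,\alpha}$ upper bound for both $G_1$ and $G_2:=G_{a,b}(\cdot,\xi_2)$ from Proposition~\ref{propositionlr1}, and then to invoke the standard fact that, in $C^{2,\alpha}(\overline{\mathcal{O}})$, division by a function bounded away from zero is polynomially controlled in the norms.

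First I will apply Proposition~\ref{propositionlr1} with $\xi=\xi_1$ and with $\xi=\xi_2$. Using $\mathrm{dist}(\xi_i,\overline{\mathcal{O}}) \ge \varrho_-$ together with $\mathbf{d}+\mathrm{dist}(\xi_i,\overline{\mathcal{O}}) \le \mathbf{d}+\varrho_+$ for $i=1,2$, both estimates reduce to the common upper bound
\[
\|G_i\|_{C^{2,\alpha}(\overline{\mathcal{O}})} \le C\Lambda(\mathbf{d}+\varrho_+)^\varkappa \max\bigl(\varrho_-^{-(2+\alpha)},1\bigr)\varrho_-^{-n+2}.
\]
Since $\Lambda$ is polynomial in its argument, the factor $\Lambda(\mathbf{d}+\varrho_+)^\varkappa$ is absorbed into an exponential $e^{c(\mathbf{d}+\varrho_+)}$. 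Combining this with the pointwise lower bound of Theorem~\ref{fundamental-solution} and the fact that $|x-\xi_1| \le \mathbf{d}+\varrho_+$ on $\overline{\mathcal{O}}$, I will obtain $\inf_{\overline{\mathcal{O}}} G_1 \ge m$ with $m^{-1}\le Ce^{c(\mathbf{d}+\varrho_+)}$.

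Next I will invoke the Banach-algebra-type estimate: if $g \in C^{2,\alpha}(\overline{\mathcal{O}})$ satisfies $g \ge m > 0$, then by repeated application of the Leibniz and quotient rules up to the second derivative of $1/g$,
\[
\|1/g\|_{C^{2,\alpha}(\overline{\mathcal{O}})} \le C\sum_{k=0}^{3} m^{-(k+1)}\|g\|_{C^{2,\alpha}(\overline{\mathcal{O}})}^k,
\]
and the product inequality $\|fh\|_{C^{2,\alpha}} \le C\|f\|_{C^{2,\alpha}}\|h\|_{C^{2,\alpha}}$ applied to $f=G_2$ and $h=1/G_1$ then yields $\|G_2/G_1\|_{C^{2,\alpha}(\overline{\mathcal{O}})} \le C\|G_2\|_{C^{2,\alpha}}\|1/G_1\|_{C^{2,\alpha}}$. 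Substituting the bounds from the previous paragraph and collecting all exponential prefactors into a single $e^{c(\mathbf{d}+\varrho_+)}$ produces exactly \eqref{A11.0}.

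The main obstacle will be the bookkeeping in the quotient estimate: the $C^\alpha$ seminorm of $\partial^2(1/G_1)$ produces a term of the form $(\nabla G_1)^2/G_1^3$ whose H\"older seminorm carries up to four powers of $G_1^{-1}$ together with three copies of $G_1$ or its derivatives in $C^{2,\alpha}$, and it is precisely this worst-case term that accounts for the exponent $4$ on $(1+\max(\varrho_-^{-(2+\alpha)},1)\varrho_-^{-n+2})$ in the final estimate.
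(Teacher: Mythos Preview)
Your plan is correct and follows essentially the same route as the paper: Proposition~\ref{propositionlr1} for the $C^{2,\alpha}$ bounds on $G_1,G_2$, the lower bound from Theorem~\ref{fundamental-solution} for $\inf G_1$, Lemma~\ref{lemmaB1} (exactly your reciprocal estimate) for $\|1/G_1\|_{C^{2,\alpha}}$, and then the product inequality. The only point you gloss over, which the paper makes explicit, is that the constant $C$ in the product inequality $\|fh\|_{C^{2,\alpha}(\overline{\mathcal{O}})}\le C\|f\|_{C^{2,\alpha}}\|h\|_{C^{2,\alpha}}$ a priori depends on $\mathcal{O}$; the paper observes (via the interpolation inequality of \cite[Lemma~6.35]{GT}) that this constant is controlled by the volume of a ball of radius $2\mathbf{d}$ containing $\mathcal{O}$, and can therefore be absorbed into $e^{c(\mathbf{d}+\varrho_+)}$.
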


\begin{proof}
In this proof $C=C(n,\lambda ,\kappa ,\alpha)$, $c=c(n,\lambda,\kappa,\alpha)$ and $\nu =\nu (\alpha)>2$ are generic constants.

From Proposition \ref{propositionlr1}, we have
\begin{equation}\label{A9.0}
\|G_{a,b}(\cdot ,\xi_j )\|_{C^{2,\alpha}(\overline{\mathcal{O}})}\le C\Lambda (\mathbf{d}+\varrho_+)^\nu \max\left(\varrho_-^{-(2+\alpha)},1\right)\varrho_-^{-n+2},\quad j=1,2.
\end{equation}
Let $C_0\ge 1$ end $c_0\ge 1$ be the constants in \eqref{maineqq} and fix $0<\delta_0\le 1$. Then the first inequality in \eqref{maineqq} gives
\[
\frac{1}{G_{a,b}(\cdot ,\xi_1)}\le C_0\left(\mathbf{d}+\varrho_+\right)^{n-2}e^{2\sqrt{c_0\kappa}(\mathbf{d}+\varrho_+)}.
\]
This inequality together with Lemma \ref{lemmaB1} in Appendix \ref{appendixA} yield
\begin{equation}\label{A10.0}
\left\|\frac{1}{G_{a,b}(\cdot ,\xi_1)}\right\|_{C^{2,\alpha}(\overline{\mathcal{O}})}\le Ce^{c(\mathbf{d}+\varrho_+)}\left(1+\|G_{a,b}(\cdot ,\xi_1 )\|_{C^{2,\alpha}(\overline{\mathcal{O}})}\right)^3.
\end{equation}
Then in light of \eqref{A9.0} and \eqref{A10.0}, we get in a straightforward manner %from the interpolation inequality in \cite[Lemma 6.35, page 135]{GT} 
\[
\left\|\frac{G_{a,b}(\cdot ,\xi_2)}{G_{a,b}(\cdot ,\xi_1)}\right\|_{C^{2,\alpha}(\overline{\mathcal{O}})}\le Ce^{c(\mathbf{d}+\varrho_+)}\left(1+(1+\mathbf{d})^\nu \max\left(\varrho_-^{-(2+\alpha)},1\right)\varrho_-^{-n+2}\right)^4, 
\]
%for some constant $K_{\mathcal{O}}$, 
and hence
\[
\left\|\frac{G_{a,b}(\cdot ,\xi_2)}{G_{a,b}(\cdot ,\xi_1)}\right\|_{C^{2,\alpha}(\overline{\mathcal{O}})}\le Ce^{c(\mathbf{d}+\varrho_+)}\left(1+ \max\left(\varrho_-^{-(2+\alpha)},1\right)\varrho_-^{-n+2}\right)^4. 
\]
This is the expected inequality.
%The expected inequality follows by noting that $K_{\mathcal{O}}$ can be dominated  by a universal constant multiplied by $|B|$, for some ball $B$ of radius $2\mathbf{d}$ so that $\mathcal{O}\Subset B$. The reason is that the interpolation constant for an arbitrary ball of radius $R$ is equal to $R^n$ multiplied by the interpolation constant of the unit ball.  
\end{proof}

This corollary combined with Lemma \ref{lemmaB2} yields the following result.
\begin{corollary}\label{corollaryq2}
There exist $C=C(n,\lambda ,\kappa ,\alpha, \theta)$ and $c=c(n,\lambda,\kappa,\alpha,\theta )$  so that, for any $\xi_1,\xi_2\in \mathbb{R}^n$ and $\mathcal{O}\Subset \mathbb{R}^n \setminus\{\xi_1,\xi_2\}$, we have
\begin{equation}\label{A11.0}
\left\|\frac{G_{a,b}(\cdot ,\xi_2)}{G_{a,b}(\cdot ,\xi_1)}\right\|_{H^{2+\theta}(\mathcal{O})}\le Ce^{c(\mathbf{d}+\varrho_+)}\left(1+ \max\left(\varrho_-^{-(2+\alpha)},1\right)\varrho_-^{-n+2}\right)^4.
\end{equation}
Here $\varrho_\pm$ is the same as in Corollary \ref{corollaryq1}.
\end{corollary}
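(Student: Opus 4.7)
The plan is essentially to invoke Corollary \ref{corollaryq1}, which already gives a $C^{2,\alpha}(\overline{\mathcal{O}})$ estimate of the quotient with exactly the desired right-hand side, and then upgrade this H\"older bound to a fractional Sobolev bound by means of Lemma \ref{lemmaB2}. Since $0<\theta<\alpha$, the spaces are nested in the correct direction: $C^{2,\alpha}(\overline{\mathcal{O}})$ continuously embeds into $H^{2+\theta}(\mathcal{O})$ on a bounded domain of class $C^{1,1}$, so Lemma \ref{lemmaB2} should supply precisely the inequality needed to pass from one norm to the other.

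Concretely, I would start by writing down the conclusion of Corollary \ref{corollaryq1}:
\[
\left\|\frac{G_{a,b}(\cdot ,\xi_2)}{G_{a,b}(\cdot ,\xi_1)}\right\|_{C^{2,\alpha}(\overline{\mathcal{O}})}\le Ce^{c(\mathbf{d}+\varrho_+)}\left(1+ \max\left(\varrho_-^{-(2+\alpha)},1\right)\varrho_-^{-n+2}\right)^4,
\]
with $C=C(n,\lambda,\kappa,\alpha)$ and $c=c(n,\lambda,\kappa,\alpha)$. Then I would apply Lemma \ref{lemmaB2} (the same technical ingredient that was used to pass from Proposition \ref{propositionlr1} to Corollary \ref{corollaryq0}), which on a bounded Lipschitz or $C^{1,1}$ domain controls the $H^{2+\theta}$ norm by the $C^{2,\alpha}$ norm with a constant of the form $C'\max(\mathbf{d}^{n/2},\mathbf{d}^{n/2+\alpha-\theta})$.

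The resulting estimate has an extra polynomial-in-$\mathbf{d}$ prefactor compared to the H\"older version. Since this prefactor is at most polynomial in $\mathbf{d}$, it can be absorbed into the exponential factor $e^{c(\mathbf{d}+\varrho_+)}$ by enlarging the constant $c$: indeed, for any polynomial $P$ there exists $c'>0$ with $P(\mathbf{d})\le e^{c'\mathbf{d}}$ uniformly for $\mathbf{d}\ge 0$ up to a multiplicative constant. Updating $C$ and $c$ accordingly (and noting that $c$ is now permitted to depend on $\theta$, as stated) delivers the inequality claimed in the corollary.

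I do not foresee a genuine obstacle here; the entire argument is mechanical once Lemma \ref{lemmaB2} is available. The only bookkeeping to be careful about is that the constants in Corollary \ref{corollaryq1} are independent of $\mathcal{O}$ (they absorb the geometric interpolation constant into a factor proportional to the volume of an enclosing ball of radius $2\mathbf{d}$, which is again polynomial in $\mathbf{d}$ and hence harmless), so the same will be true after the Sobolev upgrade.
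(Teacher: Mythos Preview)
Your proposal is correct and matches the paper's own argument exactly: the paper simply states that Corollary~\ref{corollaryq1} combined with Lemma~\ref{lemmaB2} yields Corollary~\ref{corollaryq2}, which is precisely the route you outline. Your remark about absorbing the polynomial factor $\max(\mathbf{d}^{n/2},\mathbf{d}^{n/2+\alpha-\theta})$ into the exponential $e^{c(\mathbf{d}+\varrho_+)}$ (at the cost of letting $c$ depend on $\theta$) is the only bookkeeping step left implicit in the paper.
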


\subsection{Gradient estimate of the quotient of two fundamental solutions}

The following result uses  the singularity of the Green function near the location  of the  point source.  

\begin{lemma}\label{lemmaq1} 
There exist $x^\ast\in  B(\xi_2,|\xi_1-\xi_2|/2)\setminus\{\xi_2\}$, $C=(n,\lambda,\kappa, |\xi_1-\xi_2|)>0$ and $\rho=\rho(n,\lambda,\kappa, |\xi_1-\xi_2|)>0$  so that $\overline{B}(x^\ast,\rho)\subset B(\xi_2,|\xi_1-\xi_2|/2)\setminus\{\xi_2\}$ and 
\bea
C\le \left\| \nabla \left( \frac{G_{a,b}(\cdot ,\xi_2)}{G_{a,b}(\cdot ,\xi_1)}\right)\right\|_{L^2(B(x^\ast,\rho))}.
\eea
\end{lemma}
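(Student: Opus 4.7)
The plan is to exploit the fact that $Q := G_{a,b}(\cdot,\xi_2)/G_{a,b}(\cdot,\xi_1)$ blows up at $\xi_2$ but remains bounded on any sphere around $\xi_2$ contained in $B(\xi_2,|\xi_1-\xi_2|/2)$. For $x\in\overline{B}(\xi_2,|\xi_1-\xi_2|/2)$ one has $|x-\xi_1|\in[|\xi_1-\xi_2|/2,\,3|\xi_1-\xi_2|/2]$, so the two-sided estimate of Theorem~\ref{fundamental-solution} sandwiches $G_{a,b}(\cdot,\xi_1)$ between two positive constants depending only on $(n,\lambda,\kappa,|\xi_1-\xi_2|)$. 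Combining this with the same estimate applied to $G_{a,b}(\cdot,\xi_2)$ produces $c_1,c_2>0$, depending only on the data, with
\[
c_1|x-\xi_2|^{-(n-2)}\le Q(x)\le c_2|x-\xi_2|^{-(n-2)},\qquad x\in B(\xi_2,|\xi_1-\xi_2|/2)\setminus\{\xi_2\}.
\]

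Next I would choose radii $0<r_1<r_2<|\xi_1-\xi_2|/2$ depending only on the data, for instance $r_2=|\xi_1-\xi_2|/4$ and $r_1$ so small that $c_1r_1^{-(n-2)}\ge 2c_2r_2^{-(n-2)}$. Along any radial ray $t\mapsto\xi_2+t\omega$, $\omega\in S^{n-1}$, the variation
\[
Q(\xi_2+r_1\omega)-Q(\xi_2+r_2\omega)\ge c_1r_1^{-(n-2)}-c_2r_2^{-(n-2)}\ge c_2r_2^{-(n-2)}=:\Delta_0
\]
is bounded below uniformly in the data. Fixing any $\omega_0\in S^{n-1}$ and applying the one-dimensional mean value theorem to $t\mapsto Q(\xi_2+t\omega_0)$ on $[r_1,r_2]$ produces a point $x^*=\xi_2+t_0\omega_0$, with $t_0\in(r_1,r_2)$, at which
\[
|\nabla Q(x^*)|\ge|\partial_t Q(x^*)|\ge\Delta_0/(r_2-r_1)=:\varepsilon_0>0.
\]

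The last step is to pass from this pointwise gradient bound to an $L^2$ lower bound on a ball around $x^*$. I would invoke Corollary~\ref{corollaryq1} on the compact sub-annulus $\mathcal{A}=\{r_1/2\le|x-\xi_2|\le(r_2+|\xi_1-\xi_2|/2)/2\}$, which contains $x^*$ and lies at positive distance from $\{\xi_1,\xi_2\}$; this yields $M>0$, depending only on the data, with $\|Q\|_{C^{2,\alpha}(\mathcal{A})}\le M$, and in particular $[\nabla Q]_{C^{0,\alpha}(\mathcal{A})}\le M$. Choosing $\rho>0$ small enough that both $M\rho^\alpha\le\varepsilon_0/2$ and $\overline{B}(x^*,\rho)\subset\mathcal{A}$ (which are achievable with $\rho$ depending only on the data) gives $|\nabla Q|\ge\varepsilon_0/2$ throughout $B(x^*,\rho)$, and therefore $\|\nabla Q\|_{L^2(B(x^*,\rho))}\ge c_n\rho^{n/2}\varepsilon_0/2$ with $c_n$ depending only on $n$. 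The one genuinely delicate point is uniformity: each of $c_1,c_2,r_1,r_2,\Delta_0,\varepsilon_0,M,\rho$ must depend only on $(n,\lambda,\kappa,|\xi_1-\xi_2|)$ (and on the fixed Hölder exponent $\alpha$) and not on the particular $(a,b)\in\mathcal{D}(\lambda,\kappa)$; this is precisely what the uniform estimates in Theorem~\ref{fundamental-solution} and Corollary~\ref{corollaryq1} are designed to provide.
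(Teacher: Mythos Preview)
Your argument is correct, and it reaches the same conclusion as the paper, but by a different mechanism. The paper also starts from the two-sided bound $c_1|x-\xi_2|^{-(n-2)}\le w(x)\le c_2|x-\xi_2|^{-(n-2)}$, but then, instead of the pointwise mean value theorem on one ray, it writes $\varphi(\tilde t,\theta)-\varphi(t,\theta)=\int_t^{\tilde t}\nabla w(\xi_2+s\theta)\cdot\theta\,ds$, applies Cauchy--Schwarz, and integrates over $\theta\in\mathbb{S}^{n-1}$ to obtain directly an $L^2$ lower bound on $\nabla w$ over an annulus $\mathscr{C}_{t_0}=\{t_0<|x-\xi_2|<\tilde t\}$. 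A finite covering of $\overline{\mathscr{C}_{t_0}}$ by balls of fixed radius $\rho$ and a pigeonhole step then single out $x^\ast$. Your route trades this integral/covering argument for the one-dimensional mean value theorem plus the H\"older bound on $\nabla Q$ coming from Corollary~\ref{corollaryq1}, which lets you propagate the pointwise gradient bound to a small ball. Both are legitimate; the paper's version is slightly more self-contained because it needs only $C^1$-regularity (to justify the fundamental theorem of calculus along rays) and the two-sided bound from Theorem~\ref{fundamental-solution}, whereas yours imports the quantitative $C^{2,\alpha}$ estimate of Corollary~\ref{corollaryq1}. A minor bookkeeping point: the lemma asserts constants depending only on $(n,\lambda,\kappa,|\xi_1-\xi_2|)$, but your constants pick up a dependence on the H\"older exponent through Corollary~\ref{corollaryq1}; this is harmless if you simply fix, say, $\alpha=1/2$ when invoking that corollary. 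Note also that in both arguments the point $x^\ast$ may depend on $(a,b)$, but it always lies in a fixed annulus with $\overline{B}(x^\ast,\rho)$ contained in $B(\xi_2,|\xi_1-\xi_2|/2)\setminus\{\xi_2\}$, which is what the subsequent applications require.
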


\begin{proof}
We set for notational convenience $w=G_{a,b}(\cdot ,\xi_2)/G_{a,b}(\cdot ,\xi_1)$. In light of Theorem \ref{fundamental-solution}, we obtain by straightforward computations the following two-sided inequality
\begin{equation}\label{ge1}
\frac{C^{-1}}{|x-\xi_2|^{n-2}}\le w(x)\le \frac{C}{|x-\xi_2|^{n-2}},\quad x\in B(\xi_2,|\xi_1-\xi_2|/2)\setminus\{\xi_2\}.
\end{equation}
Here and until the end of this proof $C=C(n,\lambda,\kappa, |\xi_1-\xi_2|)$ is a generic constant.

Set $\tilde{t}=|\xi_1-\xi_2|/4$ and define
\bea
\varphi(t,\theta) = w(\xi_2+t \theta),\quad (t,\theta )\in (0,\tilde{t}]\times \mathbb{S}^{n-1}. 
\eea
According to Corollary \ref{corollaryq1}, $\varphi \in C_{\mathrm loc}^{2,\alpha}((0,\tilde{t}]\times \mathbb{S}^{n-1})$ and hence
\bea
\varphi (\tilde{t},\theta ) - \varphi(t,\theta ) = \int_t^{\tilde{t}} \nabla w(\xi_2+s \theta)\cdot \theta ds,
\eea
which in turn gives
\begin{align*}
|\varphi (\tilde{t},\theta) - \varphi(t,\theta)|^2 &\leq (\tilde{t}-t)\int_t^{\tilde{t}} \left|\nabla w(\xi_2+s\theta)\right|^2ds 
\\
&\le \tilde{t}\int_t^{\tilde{t}} \left|\nabla w(\xi_2+s\theta)\right|^2ds
\\
&\le \tilde{t}\int_t^{\tilde{t}} \frac{s^{n-1}}{t^{n-1}}\left|\nabla w(\xi_2+s\theta)\right|^2ds, \quad (t,\theta )\in (0,\tilde{t}]\times \mathbb{S}^{n-1}.
\end{align*}
Whence, where $t\in (0,\tilde{t}]$,
\begin{equation} \label{r1}
t^{n-1}\int_{\mathbb{S}^{n-1}}|\varphi(\tilde{t},\theta) - \varphi(t,\theta)|^2d\theta \le \tilde{t}  
\int_{\mathscr C_t} \left|\nabla w(x)\right|^2dx.
\end{equation}
Here
\bea
\mathscr C_t = \left\{ x\in \mathbb R^n; \;  t<|x-\xi_2| <\tilde{t}\right\}.
\eea
On the other hand inequalities \eqref{ge1} imply, where $(t,\theta )\in (0,\tilde{t}]\times \mathbb{S}^{n-1}$,
\begin{equation*}\label{ge2}
\frac{C^{-1}}{t^{n-2}}
\le \varphi(t, \theta) \le \frac{C} {t^{n-2}}.
\end{equation*}
Let us then choose $t_0\le \tilde{t}$ sufficiently small in such a way that 
\[
 \frac{C^{-1}}{t^{n-2}}- \frac{C} {\tilde{t}^{n-2}} > 0,\quad t\in (0,t_0].
\] 
Therefore, for $ (t,\theta)\in (0,t_0]\times \mathbb{S}^{n-1}$,  we have
\begin{equation}\label{r2}
\left( \frac{C^{-1}}{t^{n-2}}- \frac{C} {\tilde{t}^{n-2}}\right)^2\le 
 |\varphi(\tilde{t},\theta) - \varphi(t,\theta)|^2 .
\end{equation}
We then obtain by combining inequalities  \eqref{r1} and \eqref{r2} 
\[
|\mathbb{S}^{n-1}|\left( \frac{C^{-1}}{t^{n-2}}- \frac{C} {\tilde{t}^{n-2}}\right)^2
 \le \tilde{t}\int_{\mathscr C_t} \left|\nabla w(x)\right|^2dx, \quad t\in (0,t_0].
\]
We have in particular
\bea
C\le \int_{\mathscr C_{t_0}} \left|\nabla w(x)\right|^2dx.
\eea

Let $\rho = t_0/4$. Then it is straightforward to check that, for any $x\in \overline{\mathscr{C}_{t_0}}$,
\[
\overline{B}(x,\rho)\subset \{ y\in \mathbb{R}^n;\; 3t_0/4\le |y-\xi_2|\le 5\tilde{t}/4\}\subset B(\xi_2,|\xi_1-\xi_2|/2)\setminus\{\xi_2\}.
\]
Since $\overline {\mathscr C_{t_0}}$ is compact, we find a positive integer $N=N(\lambda,\kappa,|\xi_1-\xi_2|)$ and $x_j \in \overline {\mathscr C_{t_0}}$, $j=1,\cdots, N$,  so that 
\bea
\overline {\mathscr C_{t_0}} \subset \bigcup_{j=1}^N B(x_j, \rho).
\eea
Hence 
\bea
C\leq  
\int_{\cup_{j=1}^N B(x_j, \rho)} \left|\nabla w(x)\right|^2dx. 
\eea
Pick then $x^\ast \in \{x_j, \;  1\le j\le N\}$ in such a way that 
\[
\int_{B(x^\ast, \rho)} \left|\nabla w(x)\right|^2dx = \max_{1\le j\le N} \int_{B(x_j, \rho)} \left|\nabla w(x)\right|^2dx.
\]
Therefore 
\bea
C\le  \int_{ B(x^\ast, \rho)} \left|\nabla w(x)\right|^2dx. 
\eea
This finishes the proof.
\end{proof}

\section{Uniform lower bound for the gradient}\label{section3}

Let $\mathcal{O}$ be a Lipschitz bounded domain of $\mathbb{R}^n$ and $\sigma \in C^{0,1}(\overline{\mathcal O})$ satisfying 
\bean \label{sigmabounds1}
\varkappa^{-1} \le \sigma \quad \mbox{and}\quad \|\sigma\|_{C^{0,1}(\overline{\mathcal O})} \le \varkappa,
\eean
for some fixed constant $\varkappa>1$.

We  prove in this section a polynomial lower bound  of the local $L^2$-norm of the
gradient of solutions of 
\[
L_\sigma u =\mbox{div}(\sigma \nabla u)= 0\quad \mbox{in}\; \mathcal{O}. 
\]
In a first step we establish, via a three-ball inequality for the gradient, a uniform lower bound of negative exponential type. We use then in a second  step an argument based on  the so-called frequency function in order to improve this lower bound.

\subsection{Preliminary lower bound}

We need hereafter the following three-ball inequality for the gradient.

 \begin{theorem}\label{theoremlb1}
Let $0<k<\ell<m$ be real. There exist two constants $C=C(n,\varkappa,k,\ell,m)>0$ and $0<\gamma=\gamma(n,\varkappa,k,\ell,m) <1$  so that, for any  $v\in H^1(\mathcal{O})$ satisfying $L_\sigma v=0$, $y\in \mathcal{O}$ and $0<r< \mbox{dist}(y,\partial \mathcal{O})/m$, we have
\[
C\|\nabla v\|_{L^2(B(y,\ell r))}\le \|\nabla v\|_{L^2(B(y,kr))}^\gamma \|\nabla v\|_{L^2(B(y,m r))}^{1-\gamma}.
\]
\end{theorem}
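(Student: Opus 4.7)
My plan is to deduce this gradient three-ball inequality from the well-known three-ball inequality for $H^1$ solutions of $L_\sigma u=0$, bridging between solution norms and gradient norms via the Caccioppoli and Poincar\'e inequalities.

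I would start from the classical three-ball inequality for solutions: for any $0<\rho_1<\rho_2<\rho_3$ with $\overline{B}(y,\rho_3 r)\subset\mathcal{O}$ and any $u\in H^1$ satisfying $L_\sigma u=0$,
\begin{equation*}
\|u\|_{L^2(B(y,\rho_2 r))}\le C_0\,\|u\|_{L^2(B(y,\rho_1 r))}^{\gamma_0}\,\|u\|_{L^2(B(y,\rho_3 r))}^{1-\gamma_0},
\end{equation*}
with $\gamma_0\in(0,1)$ and $C_0>0$ depending only on $n$, $\varkappa$ and the ratios $\rho_1/\rho_3$ and $\rho_2/\rho_3$. This is a standard consequence of a Carleman estimate (or, equivalently, of the Garofalo--Lin frequency function) for divergence-form operators with Lipschitz principal part, and I would quote it from the literature. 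The key observation is that since $L_\sigma v=0$ implies $L_\sigma(v-c)=0$ for every constant $c$, the three-ball inequality applies equally well to $v-c$.

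Next I introduce auxiliary radii $k_0<k<\ell<\ell_1<m$ (for instance $k_0=k/2$ and $\ell_1=(\ell+m)/2$), set $c=\bar v_{k_0}$ (the mean of $v$ over $B(y,k_0 r)$), and write $B_\rho=B(y,\rho r)$. Applying the solution three-ball inequality to $v-c$ on the triple $(k_0,\ell_1,m)$ yields
\begin{equation*}
\|v-c\|_{L^2(B_{\ell_1})}\le C\,\|v-c\|_{L^2(B_{k_0})}^{\gamma_0}\,\|v-c\|_{L^2(B_m)}^{1-\gamma_0}.
\end{equation*}
On the left, Caccioppoli applied to the solution $v-c$ on the nested pair $(B_\ell,B_{\ell_1})$ upgrades to $\|\nabla v\|_{L^2(B_\ell)}$ up to a factor $C\bigl[(\ell_1-\ell)r\bigr]^{-1}$. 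On the right, Poincar\'e on $B_{k_0}$ together with the inclusion $B_{k_0}\subset B_k$ gives $\|v-c\|_{L^2(B_{k_0})}\le Ckr\,\|\nabla v\|_{L^2(B_k)}$, while a two-mean comparison $|\bar v_m-\bar v_{k_0}|\le C|B_{k_0}|^{-1/2}\,\|v-\bar v_m\|_{L^2(B_m)}$ combined with Poincar\'e on $B_m$ yields $\|v-c\|_{L^2(B_m)}\le Cmr\,\|\nabla v\|_{L^2(B_m)}$. Substituting these bounds and noting that the powers of $r$ cancel since $\gamma_0+(1-\gamma_0)=1$ gives the claimed inequality with $\gamma=\gamma_0$.

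The only real subtlety is the bookkeeping of intermediate radii and the verification that all constants degrade only polynomially in the prescribed ratios $k/m$ and $\ell/m$, so that the final exponent $\gamma$ stays strictly between $0$ and $1$. The genuinely deep ingredient, the three-ball inequality for solutions with Lipschitz coefficients, is used as a black box; everything else reduces to Caccioppoli, Poincar\'e and triangle-inequality manipulations.
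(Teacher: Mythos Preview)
The paper does not actually prove this theorem: immediately after the statement it writes ``A proof of this theorem can be found in \cite{BC} or \cite{Ch}'' and uses the result as a black box. So there is no in-paper argument to compare against.

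Your reduction is correct and is in fact one of the standard ways to obtain the gradient three-ball inequality from the $L^2$ three-ball inequality for solutions. The key points all check out: (i) since $L_\sigma$ has no zeroth-order term, $L_\sigma(v-c)=0$ for any constant $c$, so the solution three-ball applies to $v-c$; (ii) Caccioppoli on $(B_\ell,B_{\ell_1})$ gives $\|\nabla v\|_{L^2(B_\ell)}\le C[(\ell_1-\ell)r]^{-1}\|v-c\|_{L^2(B_{\ell_1})}$; (iii) Poincar\'e on $B_{k_0}$ with $c=\bar v_{k_0}$ controls the small-ball factor by $Ck_0 r\,\|\nabla v\|_{L^2(B_{k_0})}\le Ckr\,\|\nabla v\|_{L^2(B_k)}$; (iv) the mean comparison $|\bar v_{k_0}-\bar v_m|\le |B_{k_0}|^{-1/2}\|v-\bar v_m\|_{L^2(B_m)}$ together with Poincar\'e on $B_m$ controls the large-ball factor by $C(m/k_0)^{n/2}mr\,\|\nabla v\|_{L^2(B_m)}$; and (v) the factors $r^{-1}\cdot r^{\gamma_0}\cdot r^{1-\gamma_0}$ indeed cancel, so the final constant depends only on $n$, $\varkappa$, and the ratios $k/m$, $\ell/m$, as required. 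The exponent you obtain is exactly the $\gamma_0$ coming from the quoted solution three-ball inequality, hence lies strictly in $(0,1)$.

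One minor remark on presentation: you might make explicit that the constant in step (iv) carries the factor $(m/k_0)^{n/2}$ from the volume ratio, since that is where the dependence on $k/m$ enters most strongly; but this is bookkeeping, not a gap.
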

A proof of this theorem can be found  in \cite{BC} or  \cite{Ch}.

Define the geometric distance $d_g^D$ on the bounded domain $D$ of $\mathbb{R}^n$ by
\[
d_g^D(x,y)=\inf\left \{ \ell (\psi ) ;\; \psi :[0,1]\rightarrow D \; \mbox{Lipschitz path joining}\; x \; \mbox{to}\; y\right\},
\] 
where
\[
\ell (\psi )= \int_0^1|\dot{\psi}(t)|dt
\]
is the length of $\psi$. 

Note that according to Rademacher's theorem any Lipschitz continuous function $\psi :[0,1]\rightarrow D$ is almost everywhere differentiable with $|\dot{\psi}(t)|\le k$ a.e. $t\in [0,1]$, where $k$ is the Lipschitz constant of $\psi$. 

\begin{lemma}\label{Glemma} 
Let $D$ be a bounded Lipschitz domain of $\mathbb{R}^n$. Then $d_g^D\in L^\infty (D \times D )$ and there exists a constant $\mathfrak{c}_D>0$ so that
\bean \label{distance}
|x-y|\le d^D_g(x,y)\le \mathfrak{c}_D|x-y|, \quad  x, y \in D.
\eean
\end{lemma}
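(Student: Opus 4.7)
My plan has two parts: a trivial lower bound via the $L^1$ triangle inequality, and a local-to-global argument for the upper bound, from which the $L^\infty$ assertion is immediate. For the lower bound, any Lipschitz path $\psi:[0,1]\to D$ with $\psi(0)=x$ and $\psi(1)=y$ satisfies
\[
|x-y|=\left|\int_0^1\dot{\psi}(t)\,dt\right|\le\int_0^1|\dot{\psi}(t)|\,dt=\ell(\psi),
\]
so passing to the infimum over such $\psi$ yields $|x-y|\le d_g^D(x,y)$.

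For the upper bound I would first prove a local version: for every $z\in\overline{D}$ there exist a neighborhood $U_z$ and a constant $C_z>0$ such that $d_g^D(x,y)\le C_z|x-y|$ for all $x,y\in U_z\cap D$. At an interior point $z\in D$ this is automatic, since some small ball about $z$ lies in $D$ and $x,y$ can be joined by the straight segment with $C_z=1$. At a boundary point $z\in\partial D$, the Lipschitz regularity of $D$ supplies coordinates in which
\[
D\cap B(z,r_z)=\{(x',x_n)\in B(z,r_z):x_n>\phi(x')\}
\]
for a Lipschitz function $\phi$ of constant $L_z$. Choosing $U_z=B(z,\rho_z)$ with $\rho_z$ small enough relative to $r_z/(1+L_z)$, and given $x=(x',x_n),\,y=(y',y_n)\in D\cap U_z$, I would connect $x$ to $y$ by a three-leg polygonal path: vertically up from $x$ to height $\max(x_n,y_n)+L_z|x-y|$, horizontally to the vertical line through $y$, and then down to $y$. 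The Lipschitz estimate $\phi(z')\le\phi(x')+L_z|x'-y'|$ guarantees the horizontal leg stays strictly above the graph of $\phi$, so the path lies in $D\cap B(z,r_z)$; its total length is at most $(2L_z+3)|x-y|$.

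Next, compactness of $\overline{D}$ yields a finite subcover $U_{z_1},\ldots,U_{z_N}$ and a Lebesgue number $r_0>0$, producing $d_g^D(x,y)\le C_0|x-y|$ whenever $|x-y|<r_0$, with $C_0:=\max_j C_{z_j}$. For pairs with $|x-y|\ge r_0$ I would use connectedness of $D$: chaining local estimates through a fixed finite polygonal walk between centers of the $U_{z_j}$ provides a uniform bound $M:=\sup_{x,y\in D}d_g^D(x,y)<\infty$, hence $d_g^D(x,y)\le M\le (M/r_0)|x-y|$ in this regime. Setting $\mathfrak{c}_D:=\max(C_0,M/r_0)$ gives the upper bound of the statement, and the $L^\infty$ assertion follows from $d_g^D(x,y)\le\mathfrak{c}_D\,\mathrm{diam}(D)$.

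The main technical obstacle is the local boundary construction, specifically the verification that the three-leg path remains in $D$ inside the Lipschitz chart. The verification rests on the observation that $\max(x_n,y_n)>\max(\phi(x'),\phi(y'))$ (from $x_n>\phi(x')$ and $y_n>\phi(y')$), which combined with the choice of elevation $L_z|x-y|\ge L_z|x'-y'|$ forces the horizontal leg to sit above $\max(\phi(x'),\phi(y'))+L_z|x'-y'|\ge\phi(z')$ for every $z'$ on the segment from $x'$ to $y'$; simultaneously, taking $\rho_z$ small relative to $r_z/(1+L_z)$ keeps the entire lifted path inside $B(z,r_z)$ where the graph representation of $\partial D$ is valid.
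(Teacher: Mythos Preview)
Your argument is correct. The paper does not actually supply a proof of this lemma; it merely cites \cite[Lemma~A3]{TW}, so there is no in-paper argument to compare against. Your local-to-global scheme---a straight segment at interior points, a three-leg vertical/horizontal/vertical path in a boundary Lipschitz chart, a Lebesgue-number argument for nearby pairs, and a chaining argument through the finite cover for distant pairs---is the standard route and all the verifications you indicate go through. One cosmetic point: the ``centers'' $z_j$ may lie on $\partial D$, so instead of a polygonal walk between centers you should pick auxiliary points $p_j\in U_{z_j}\cap D$ and use that the intersection graph on $\{U_{z_j}\}$ is connected (a consequence of the connectedness of $D$) to produce the uniform bound $M$; this is clearly what you intend and it works.
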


We refer to \cite[Lemma A3]{TW} for a proof.

In this subsection we use the following notations
\[
\mathcal{O}^\delta =\{ x\in \mathcal{O};\; \mbox{dist}(x,\partial \mathcal O)>\delta\}
\]
and
\[
\chi (\mathcal{O})=\sup\{ \delta >0;\; \mathcal{O}^\delta\ne\emptyset\}.
\]
Define 
\begin{align} 
\mathscr{S}(\mathcal{O}, x_0,M,\eta ,\delta)=
&\left\{ u\in H^1(\mathcal{O}); \; L_\sigma u=0\; \textrm{in}\; \mathcal{O}, \right.\label{S}
 \\
 &\hskip 2cm \left. \|\nabla u\|_{L^2(\mathcal{O})}\le M,\;  \|\nabla u\|_{L^2(B(x_0,\delta ))}\ge \eta \right\},\nonumber
\end{align}
with $\delta \in (0, \chi (\mathcal{O})/3)$, $x_0\in \mathcal{O}^{3\delta}$, $\eta>0$ and $M\ge 1$ satisfying $\eta <M$.

\begin{lemma}\label{lemmalb1}
There exist two constants $c=c(n,\varkappa)\ge 1$ and $0<\gamma=\gamma(n,\varkappa)<1$  so that, for any $u\in\mathscr{S}(\mathcal{O}, x_0, M,\eta ,\delta)$ and $x\in  \mathcal{O}^{3\delta}$, we have
\begin{equation}\label{2.1.1}
e^{-[\ln(cM/\eta)/\gamma] e^{[2n|\ln \gamma|] \mathfrak{c}|x-x_0|/\delta }}\le \|\nabla u\|_{L^2(B(x,\delta ))},
\end{equation}
with $\mathfrak{c}=\mathfrak{c}_{\mathcal{O}}$ is as in Lemma \ref{Glemma}.
\end{lemma}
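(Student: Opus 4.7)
The plan is to propagate the lower bound $\|\nabla u\|_{L^2(B(x_0,\delta))}\ge \eta$ from $x_0$ to $x$ by iterating the three-ball inequality of Theorem \ref{theoremlb1} along a chain of balls lying in $\mathcal{O}^{3\delta}$. As a first step I would fix a triple $0<k<\ell<m$, for concreteness $k=1/2$, $\ell=1$, $m=3$, and apply Theorem \ref{theoremlb1} at every $y\in \mathcal{O}^{3\delta}$ with radius $r=\delta$. Since $B(y,3\delta)\subset \mathcal{O}$ the outer norm is controlled by the global bound $M$, so absorbing the factor $M^{1-\gamma}$ turns the three-ball inequality into the one-step estimate
\[
\|\nabla u\|_{L^2(B(y,\delta))}\le C_1 M^{1-\gamma}\,\|\nabla u\|_{L^2(B(y,\delta/2))}^{\gamma},
\]
with constants $C_1=C_1(n,\varkappa)$ and $\gamma=\gamma(n,\varkappa)\in(0,1)$ coming from Theorem \ref{theoremlb1}.

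Next I would invoke Lemma \ref{Glemma} to join $x_0$ and $x$ by a Lipschitz path in $\mathcal{O}$ of length at most $\mathfrak{c}|x-x_0|$, and extract from it a chain $x_0=y_0,y_1,\ldots,y_N=x$ of points of $\mathcal{O}^{3\delta}$ satisfying $|y_{j}-y_{j-1}|\le \delta/2$; the chain should be arranged so that $N$ is bounded by $2n\mathfrak{c}|x-x_0|/\delta$, with the dimensional factor $2n$ absorbing the geometric overhead needed to force the centers to lie inside $\mathcal{O}^{3\delta}$. The condition $|y_{j-1}-y_j|\le \delta/2$ gives the inclusion $B(y_{j-1},\delta/2)\subset B(y_j,\delta)$, so applying the one-step inequality at $y_{j-1}$ and using monotonicity in the radius yields, with $\alpha_j:=\|\nabla u\|_{L^2(B(y_j,\delta))}$,
\[
\alpha_{j-1}\le C_1 M^{1-\gamma}\alpha_j^{\gamma},\qquad j=1,\ldots,N,\qquad \alpha_0\ge \eta.
\]

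Finally, setting $\beta_j=-\ln\alpha_j$ and $A=\ln(C_1 M^{1-\gamma})$, the recursion reads $\beta_j\le (1/\gamma)\beta_{j-1}+A/\gamma$, and the standard shift $\beta_j+A/(1-\gamma)\le (1/\gamma)\bigl(\beta_{j-1}+A/(1-\gamma)\bigr)$ telescopes to
\[
\beta_N\le (1/\gamma)^N\bigl(|\ln\eta|+A/(1-\gamma)\bigr)\le \frac{\ln(cM/\eta)}{\gamma}\,e^{N|\ln\gamma|},
\]
for a suitable $c=c(n,\varkappa)\ge 1$. Plugging in the bound $N\le 2n\mathfrak{c}|x-x_0|/\delta$ and exponentiating produces exactly the double exponential lower bound \eqref{2.1.1}.

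The step I expect to be the main obstacle is precisely the chain construction: Lemma \ref{Glemma} controls the geodesic distance inside the Lipschitz domain $\mathcal{O}$, while the three-ball inequality must be applied at centers sitting inside $\mathcal{O}^{3\delta}$, which in general is neither Lipschitz nor even connected when $\delta$ approaches the thickness of $\mathcal{O}$. Deforming a Lipschitz path from Lemma \ref{Glemma} into $\mathcal{O}^{3\delta}$ while keeping the number $N$ of balls controlled solely by $\mathfrak{c}_{\mathcal{O}}|x-x_0|/\delta$, without any further $\delta$-dependent blow-up, is the delicate point.
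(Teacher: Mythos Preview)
Your approach is essentially the paper's: a chain of overlapping balls along a Lipschitz path, iteration of the three-ball inequality, and the bound $N\le 2n\mathfrak{c}|x-x_0|/\delta$ on the number of steps. The paper works with radii $\delta,2\delta,3\delta$ (consecutive centers at distance $\delta$, so $B(y_{k+1},\delta)\subset B(y_k,2\delta)$) rather than your $\delta/2,\delta,3\delta$, and it orients the chain from $x$ to $x_0$, but these choices are immaterial. The factor $2n$ in the paper comes not from pushing centers into $\mathcal{O}^{3\delta}$ but from a factor $2$ in $\ell(\psi)\le 2d_g(x_0,x)$ together with a component-wise estimate $|\psi(t_{k+1})-\psi(t_k)|\le n\max_i|\psi_i(t_{k+1})-\psi_i(t_k)|$.

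Regarding the obstacle you flag: the paper simply takes $\psi:[0,1]\to\mathcal{O}$ and applies the three-ball inequality at the chain points $y_j=\psi(t_j)$ without checking that $y_j\in\mathcal{O}^{3\delta}$; it does not deform the path into $\mathcal{O}^{3\delta}$ at all. So the issue you identify as the delicate step is in fact left implicit in the paper as well, and your proposal already matches the level of rigor of the original argument.
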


\begin{proof}
Pick $u\in \mathscr{S}(\mathcal{O},x_0, M,\eta ,\delta)$. Let $x\in \mathcal{O}^{3\delta}$ and $\psi :[0,1]\rightarrow \mathcal{O}$ be a Lipschitz path joining $x=\psi(0)$ to $x_0=\psi(1)$, so that $\ell (\psi)\le  2d_g (x_0,x)$. Here and henceforth, for simplicity convenience, we use $d_g(x_0,x)$  instead of $d_g^{\mathcal{O}}(x_0,x)$.

Let $t_0=0$ and $t_{k+1}=\inf \{t\in [t_k,1];\; \psi (t)\not\in B(\psi (t_k),\delta )\}$, $k\geq 0$. We claim that there exists an integer $N\ge 1$ verifying  $\psi (1)\in B(\psi(t_N),\delta )$. If not, we would have $\psi (1)\not\in B(\psi (t_k),\delta )$ for any $k\ge 0$. As the sequence $(t_k)$ is non decreasing and bounded from above by $1$, it converges to $\hat{t}\le 1$. In particular, there exists an integer $k_0\geq 1$ so that $\psi (t_k)\in B\left(\psi (\hat{t}),\delta/2\right)$, $k\ge k_0$. But this contradicts the fact that $\left|\psi (t_{k+1})-\psi (t_k)\right| \ge \delta$, $k\ge 0$.

Let us check that $N\le N_0$, where $N_0=N_0(n, |x-x_0|, \mathfrak{c}, \delta)$. Pick $1\le j\le n$ so that 
\[
\max_{1\leq i\leq n} \left|\psi _i(t_{k+1})-\psi _i(t_k)\right| =\left|\psi _j(t_{k+1})-\psi _j(t_k)\right|,
\]
where $\psi_i$ is the $i$th component of $\psi$.
Then
\[
\delta \le n\left|\psi _j (t_{k+1})-\psi _j(t_k)\right|=n\left| \int_{t_k}^{t_{k+1}}\dot{\psi}_j(t)dt\right|\le  n\int_{t_k}^{t_{k+1}}|\dot{\psi}(t)|dt  .
\]
Consequently, where $t_{N+1}=1$, 
\[
(N+1)\delta \le n\sum_{k=0}^N\int_{t_k}^{t_{k+1}}|\dot{\psi}(t)|dt=n\ell (\psi)\le 2nd_g (x_0,x)\le 2n\mathfrak{c}|x-x_0|.
\]
Therefore
\[
N\le N_0=\left[ \frac{2n\mathfrak{c}|x-x_0|}{\delta}\right].
\]

Let $y_0=x$ and $y_k=\psi (t_k)$, $1\le k\le N$.  If $|z-y_{k+1}|<\delta$, then $|z-y_k|\le |z-y_{k+1}|+|y_{k+1}-y_k|<2\delta$. In other words $B(y_{k+1},\delta )\subset B(y_k,2\delta)$. We get from Theorem \ref{theoremlb1}
\begin{equation}\label{est1}
\|\nabla u\|_{L^2(B(y_j,2\delta ))}\leq C\|\nabla u\|_{L^2(B(y_j,3\delta ))}^{1-\gamma}\|\nabla u\|_{L^2(B(y_j,\delta ))}^\gamma,\quad 0\le j\le N,
\end{equation}
for some constants $C=C(n,\varkappa)>0$ and $0<\gamma=\gamma(n,\varkappa) <1$.

Set $I_j=\|\nabla u\|_{L^2(B(y_j,\delta ))}$, $0\le j\le N$ and $I_{N+1}=\|\nabla u\|_{L^2(B(x_0,\delta ))}$. Since $B(y_{j+1},\delta )\subset B(y_j,2\delta )$, $1\le j\le N-1$, estimate \eqref{est1} implies
\begin{equation}\label{est2}
I_{j+1}\le C M^{1-\gamma}I_j^\gamma,\;\; 0\le j\le N.
\end{equation}
Let $C_1=C^{1+\gamma+\ldots +\gamma^{N+1}}$ and $\beta =\gamma^{N+1}$. Then, by a simple induction argument, estimate \eqref{est2} yields
\begin{equation}\label{est3}
I_{N+1}\leq C_1M^{1-\beta}I_0^\beta .
\end{equation}
Without loss of generality, we assume in the sequel that $C\ge 1$ in \eqref{est2}.
Using that $N\leq N_0$, we have 
\begin{align*}
&\beta \ge \beta _0=\gamma^{N_0+1},
\\
&C_1\le C^{\frac{1}{1-\gamma}},
\\
&\left(\frac{I_0}{M}\right)^\beta\le \left(\frac{I_0}{M}\right)^{\beta_0}.
\end{align*}
These estimates in \eqref{est3} give
\[
\frac{I_{N+1}}{M}\leq C^{\frac{1}{1-\gamma}}\left(\frac{I_0}{M}\right)^{\gamma^{N_0+1}},
\]
from which we deduce that
\[
\|\nabla u\|_{L^2(B(x_0,\delta ))}\le C^{\frac{1}{1-\gamma}}M^{1-\gamma^{N_0+1}}\|\nabla u\|_{L^2(B(x,\delta ))}^{\gamma^{N_0+1}}.
\]
But $M\ge 1$. Whence
\[
\eta \le \|\nabla u\|_{L^2(B(x_0,\delta ))}\le C^{\frac{1}{1-\gamma}}M\|\nabla u\|_{L^2(B(x,\delta ))}^{\gamma^{N_0+1}}.
\]
The expected inequality follows readily from this last estimate.
\end{proof}

\subsection{An estimate for the frequency function}

Some tools in the present section are borrowed from \cite{GL1, GL2, Ku}. Let $u\in H^1(\mathcal{O})$ and $\sigma \in C^{0,1}(\overline{\mathcal O})$ satisfying  the bounds
\eqref{sigmabounds1}. We recall that the usual frequency function, relative to the operator $L_\sigma $, associated to $u$ is defined by
\[
N(u)(x_0,r)= \frac{rD(u)(x_0,r)}{H(u)(x_0,r)},
\]
provided that $B(x_0,r)\Subset \mathcal{O}$, with
\begin{align*}
&D(u)(x_0,r)=\int_{B(x_0,r)}\sigma (x)|\nabla u(x)|^2dx,
\\
&H(u)(x_0,r)=\int_{\partial B(x_0,r)}\sigma (x) u^2(x)dS(x).
\end{align*}
Define also
\[
K(u)(x_0,r)= \int_{B(x_0,r)}\sigma(x)u^2(x)dx.
\]
Prior to studying the properties of the frequency function, we prove some preliminary results. Fix $u\in H^2(\mathcal{O})$ so that $L_\sigma u =0$ in $\mathcal{O}$ and, for simplicity convenience, we drop in the sequel the dependence on $u$ of $N$, $D$, $H$ and $K$.

\begin{lemma}\label{lemma-a1}
For $x_0\in \mathcal{O}^\delta$ and $0<r<\delta$, we have
\begin{align}
&\partial _rH(x_0,r)=\frac{n-1}{r}H(x_0,r)+\tilde{H}(x_0,r) +2D(x_0,r), \label{a1}
\\
&\partial _rD(x_0,r)=\frac{n-2}{r}D(x_0,r)+\frac{1}{r}\tilde{D}(x_0,r)+2\hat{H}(x_0,r). \label{a2}
\end{align}
Here 
\begin{align*}
&\tilde{H}(x_0,r)=\int_{\partial B(x_0,r)}u^2\nabla \sigma(x)\cdot \nu (x)dS(x),
\\
&\hat{H}(x_0,r)=\int_{\partial B(x_0,r)}\sigma(x)(\partial _\nu u(x))^2dS(x),
\\
&\tilde{D}(x_0,r)=\int_{B(x_0,r)}|\nabla u(x)|^2\nabla \sigma(x)\cdot (x-x_0)dx.
\end{align*}
\end{lemma}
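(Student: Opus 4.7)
The two identities are ``differentiation of integrals over balls'' type results. For \eqref{a1} I would parametrize the sphere, pick up the Jacobian $r^{n-1}$ to produce the $\frac{n-1}{r}H$ term, expand $\nabla(\sigma u^{2})$, and use the PDE $L_{\sigma}u=0$ via the divergence theorem to convert the boundary integral $\int_{\partial B}2\sigma u\partial_{\nu}u\,dS$ into $2D$. For \eqref{a2} I would differentiate $D$ trivially by the layer-cake formula to obtain $\int_{\partial B(x_{0},r)}\sigma|\nabla u|^{2}\,dS$, and then apply a Rellich--Pohozaev identity with the radial vector field $X=x-x_{0}$ to express this surface integral in terms of $D$, $\tilde D$, and $\hat H$.

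\smallskip

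\textbf{Proof of \eqref{a1}.} Write $H(x_{0},r)=r^{n-1}\int_{\mathbb{S}^{n-1}}\sigma(x_{0}+r\omega)u^{2}(x_{0}+r\omega)\,d\omega$. Differentiating in $r$ gives
\[
\partial_{r}H=\tfrac{n-1}{r}H(x_{0},r)+\int_{\partial B(x_{0},r)}\nu\cdot\nabla(\sigma u^{2})\,dS,
\]
since $\omega=\nu$ on $\partial B(x_{0},r)$. Expanding $\nabla(\sigma u^{2})=u^{2}\nabla\sigma+2\sigma u\nabla u$ yields $\tilde H$ plus $2\int_{\partial B}\sigma u\,\partial_{\nu}u\,dS$. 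The latter, by the divergence theorem and $L_{\sigma}u=\operatorname{div}(\sigma\nabla u)=0$, equals
\[
\int_{B(x_{0},r)}\operatorname{div}(\sigma u\nabla u)\,dx=\int_{B(x_{0},r)}\bigl(u\,L_{\sigma}u+\sigma|\nabla u|^{2}\bigr)\,dx=D(x_{0},r),
\]
which gives the claimed formula.

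\smallskip

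\textbf{Proof of \eqref{a2}.} The co-area formula yields $\partial_{r}D(x_{0},r)=\int_{\partial B(x_{0},r)}\sigma|\nabla u|^{2}\,dS$, so the task is to express the right-hand side as $\tfrac{n-2}{r}D+\tilde D+2\hat H$. With $X=x-x_{0}$ one checks the algebraic identity
\[
\operatorname{div}\bigl(\sigma|\nabla u|^{2}X\bigr)-2\operatorname{div}\bigl(\sigma(X\cdot\nabla u)\nabla u\bigr)=(n-2)\sigma|\nabla u|^{2}+(X\cdot\nabla\sigma)|\nabla u|^{2}-2(X\cdot\nabla u)\,L_{\sigma}u,
\]
obtained by expanding both divergences in index notation and noticing the cancellation of the cross term $2\sigma X_{j}u_{i}u_{ij}$. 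Since $L_{\sigma}u=0$, integrating over $B(x_{0},r)$ and applying the divergence theorem — using that on $\partial B(x_{0},r)$ we have $X\cdot\nu=r$ and $X\cdot\nabla u=r\partial_{\nu}u$ — collapses the boundary terms to a multiple of $\partial_{r}D$ and of $\hat H$, delivering the required identity.

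\smallskip

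\textbf{Anticipated difficulties.} Neither computation is deep, but the Rellich--Pohozaev calculation in \eqref{a2} has to be carried out carefully: the coefficient $\sigma$ produces several terms when one commutes derivatives, and they only cancel once the equation $\sigma\Delta u=-\nabla\sigma\cdot\nabla u$ is used. The regularity hypothesis $u\in H^{2}(\mathcal{O})$ and $\sigma\in C^{0,1}$ is exactly what is needed to justify the integration by parts; everything else is bookkeeping.
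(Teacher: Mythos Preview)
Your proposal is correct and follows essentially the same route as the paper: for \eqref{a1} both arguments parametrize the sphere, differentiate, and convert the flux term $\int_{\partial B}2\sigma u\,\partial_\nu u\,dS$ into $2D$ via the divergence theorem and $L_\sigma u=0$; for \eqref{a2} the paper carries out your Rellich--Pohozaev computation in two explicit integration-by-parts steps (first rewriting $\partial_r D=\frac{1}{r}\int_B\operatorname{div}(\sigma|\nabla u|^2(x-x_0))\,dx$, then integrating $\int_B\sigma(x-x_0)\cdot\nabla(|\nabla u|^2)\,dx$ by parts and using $\sigma\Delta u=-\nabla\sigma\cdot\nabla u$) rather than packaging it as a single divergence identity, but the content is identical. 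Note that both computations actually yield $\frac{1}{r}\tilde D(x_0,r)$ in \eqref{a2} rather than $\tilde D(x_0,r)$ --- the missing factor $1/r$ is a typo in the statement, and the paper's own proof ends with the $1/r$ present.
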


\begin{proof}
Pick $x_0\in \mathcal{O} ^\delta$ and $0<r<\delta$. A simple change of variable yields
\[
H(x_0,r)=\int_{\partial B(0,1)}\sigma(x_0+ry)u^2(x_0+ry)r^{n-1}dS(y).
\]
Hence
\begin{align*}
\partial_rH(x_0,r)&=\frac{n-1}{r}H(x_0,r)+\int_{\partial B(0,1)}\nabla (\sigma u^2)(x_0+ry)\cdot yr^{n-1}dS(y)
\\
&= \frac{n-1}{r}H(x_0,r)+\int_{\partial B(0,1)}u^2(x_0+ry)\nabla \sigma(x_0+ry)\cdot yr^{n-1}dS(y)
\\
&\hskip 3cm +\int_{\partial B(0,1)}\sigma(x_0+ry)\nabla (u^2)(x_0+ry)\cdot yr^{n-1}dS(y)
\\
& = \frac{n-1}{r}H(x_0,r)+\int_{\partial B(x_0,r)}u^2(x)\nabla \sigma (x)\cdot \nu (x)dS(x)
\\
&\hskip 3cm + \int_{\partial B(x_0,r)}\sigma(x)\nabla (u^2)(x)\cdot \nu (x)dS(x)
\\
&=\frac{n-1}{r}H(x_0,r)+\tilde{H}(x_0,r)+\int_{\partial B(x_0,r)}\sigma(x) \nabla (u^2)(x)\cdot \nu (x)dS(x).
\end{align*}
Identity \eqref{a1} will follow if we prove
\begin{equation}\label{eq1}
2D(x_0,r)=\int_{\partial B(x_0,r)}\sigma(x) \nabla (u^2)(x)\cdot \nu (x)dS(x).
\end{equation}
To this end,  we observe that $\mbox{div}(\sigma \nabla u)=0$ implies
\[
\mathrm{div}(\sigma\nabla (u^2))=2u\mathrm{div}(\sigma \nabla u)+2\sigma |\nabla u|^2=2\sigma |\nabla u|^2.
\]
We then get by applying the divergence theorem 
\begin{align}
2D(x_0,r)&=\int_{B(x_0,r)}\mathrm{div}(\sigma(x) \nabla (u^2)(x))dx \label{a6}
\\
&=\int_{\partial B(x_0,r)}\sigma(x) \nabla (u^2)(x)\cdot \nu (x)dS(x).\nonumber
\end{align}
This proves \eqref{eq1}.

By a change of variable we have
\[
D(x_0,r)=\int_0^r\int_{\partial B(0,1)}\sigma(x_0+ty)|\nabla u(x_0+ty)|^2 t^{n-1}dS(y)dt.
\]
Hence
\begin{align*}
\partial _rD(x_0,r)&=\int_{\partial B(0,1)}\sigma(x_0+ry)|\nabla u(x_0+ry)|^2r^{n-1}dS(y)
\\
&= \int_{\partial B(x_0,r)}\sigma(x)|\nabla u(x)|^2dS(x) 
\\
&= \frac{1}{r}\int_{\partial B(x_0,r)}\sigma(x)|\nabla u(x)|^2(x-x_0)\cdot \nu (x)dS(x).
\end{align*}
An application of the divergence theorem then gives
\[
\partial _rD(x_0,r)=\frac{1}{r}\int_{B(x_0,r)}\mathrm{div}(\sigma(x)|\nabla u(x)|^2(x-x_0))dx.
\]
Therefore
\begin{align*}
\partial _rD(x_0,r)=\frac{1}{r}\int_{B(x_0,r)}&|\nabla u(x)|^2\mathrm{div}(\sigma(x)(x-x_0))dx
\\
&+\frac{1}{r}\int_{B(x_0,r)}\sigma(x)(x-x_0)\cdot \nabla (|\nabla u(x)|^2)dx
\end{align*}
implying
\begin{align}
\partial _rD(x_0,r)=\frac{n}{r}D(x_0,r)&+\frac{1}{r}\tilde{D}(x_0,r) \label{a3}
\\
&+\frac{1}{r}\int_{B(x_0,r)}\sigma(x)(x-x_0)\cdot \nabla (|\nabla u(x)|^2)dx.\nonumber
\end{align}

On the other hand,
\begin{align*}
\int_{B(x_0,r)}& \sigma(x)(x_j-x_{0,j}) \partial_j(\partial _i u(x))^2dx
\\
&=2\int_{B(x_0,r)}\sigma(x)(x_j-x_{0,j}) \partial_{ij}^2 u\partial _i u(x)dx
\\
&= -2\int_{B(x_0,r)}\partial _i\left[\partial _i u(x)\sigma(x)(x_j-x_{0,j})\right]\partial _j u(x)dx
\\ 
&\hskip 2cm +2\int_{\partial B(x_0,r)} \sigma(x)\partial _i u(x)(x_j-x_{0,j})\partial_j u(x)\nu _i(x)dS(x)
\\
&= -2\int_{B(x_0,r)} \partial _{ii}^2 u(x)\sigma(x)(x_j-x_{0,j})\partial _j u(x)dx
\\
&\hskip 1cm -2\int_{B(x_0,r)} \partial _i u(x) \partial _ju(x)\partial _i\left[\sigma(x)(x_j-x_{0,j})\right]dx
\\ 
&\hskip 2cm +2\int_{\partial B(x_0,r)}\sigma(x)\partial _i u(x)(x_j-x_{0,j})\partial_j u(x)\nu _i(x)dS(x).
\end{align*}
Thus, taking into account that $\sigma \Delta u=-\nabla \sigma \cdot \nabla u$, 
\begin{align*}
\int_{B(x_0,r)}\sigma(x)(x-x_0)\cdot \nabla (|\nabla u(x)|^2)dx&=-2\int_{B(x_0,r)} \sigma(x)|\nabla u(x)|^2dx
\\
& +2r\int_{\partial B(x_0,r)}\sigma(x)(\partial _\nu u(x))^2dS(x).
\end{align*}
This identity in \eqref{a3} yields
\[
\partial _rD(x_0,r)=\frac{n-2}{r}D(x_0,r)+\frac{1}{r}\tilde{D}(x_0,r)  +2\hat{H}(x_0,r).
\]
That is we proved \eqref{a2}.
\end{proof}

\begin{lemma}\label{lemma-a3}
We have
\[
K(x_0,r)\le re^{r\varkappa^2}H(x_0,r),\quad x_0\in \mathcal{O} ^\delta ,\; 0<r<\delta. 
\]
\end{lemma}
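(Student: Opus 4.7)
The proof will reduce $K(x_0,r)$ to a one-dimensional integral of $H(x_0,\cdot)$ by polar coordinates and then bound $H$ at inner radii by $H$ at radius $r$ using a monotonicity formula extracted from \eqref{a1}. Specifically, polar coordinates centered at $x_0$ give
\begin{equation*}
K(x_0,r) = \int_0^r \!\!\int_{\partial B(x_0,s)} \sigma u^2 \, dS \, ds = \int_0^r H(x_0,s) \, ds,
\end{equation*}
so everything reduces to a pointwise upper bound on $H(x_0,s)$ in terms of $H(x_0,r)$ valid for $0<s\le r<\delta$.

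To produce such a bound, I would substitute the consequences of \eqref{sigmabounds1} into identity \eqref{a1}. The lower bound on $\sigma$ forces $D(x_0,s)\ge 0$; combined with the bound on $|\nabla\sigma|$ it also gives
\begin{equation*}
|\tilde H(x_0,s)| \le \varkappa \int_{\partial B(x_0,s)} u^2 \, dS \le \varkappa^2 H(x_0,s),
\end{equation*}
where the last step uses $\sigma \ge \varkappa^{-1}$ to pass from $u^2$ to $\sigma u^2$. Plugging these into \eqref{a1} yields the one-sided differential inequality
\begin{equation*}
\partial_s H(x_0,s) \ge \Bigl(\frac{n-1}{s} - \varkappa^2\Bigr) H(x_0,s), \qquad 0<s<\delta,
\end{equation*}
which is exactly the statement that the function $s \mapsto s^{-(n-1)} e^{\varkappa^2 s} H(x_0,s)$ is non-decreasing on $(0,\delta)$. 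Applying this monotonicity between $s$ and $r$, for $0<s\le r<\delta$, yields
\begin{equation*}
H(x_0,s) \le (s/r)^{n-1} e^{\varkappa^2(r-s)} H(x_0,r) \le (s/r)^{n-1} e^{\varkappa^2 \delta} H(x_0,r).
\end{equation*}

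Integrating this pointwise bound against $ds$ on $(0,r)$, using $\int_0^r (s/r)^{n-1} ds = r/n$ and then bounding $r$ by $\delta$ in the remaining algebraic factor, produces an inequality of the type announced in the statement. The only delicate step is the absorption of $\tilde H$: this term is not sign-definite since $\nabla \sigma$ may point in any direction, and the lower bound $\sigma \ge \varkappa^{-1}$ of \eqref{sigmabounds1} is precisely what allows the conversion $\int u^2 \, dS \le \varkappa H$. Everything else amounts to a one-variable ODE manipulation followed by a one-dimensional integration, so no further obstacle is expected.
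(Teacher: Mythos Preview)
Your argument is correct and follows the same overall strategy as the paper: derive a differential inequality for $H$, turn it into a monotonicity statement, and integrate. Two differences are worth noting. First, you retain the term $\frac{n-1}{s}H$ from \eqref{a1}, which yields the sharper monotonicity of $s^{-(n-1)}e^{\varkappa^2 s}H(x_0,s)$; the paper discards this term and records only that $e^{\varkappa^2 r}H(x_0,r)$ is non-decreasing. Second, and more importantly, your polar-coordinate identity $K(x_0,r)=\int_0^r H(x_0,s)\,ds$ is the correct one, whereas the paper writes $K(x_0,r)=\int_0^r H(x_0,t)\,t^{n-1}\,dt$, which carries a spurious $t^{n-1}$ (the surface measure is already built into $H$).

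The net effect is that your computation produces $K(x_0,r)\le \frac{r}{n}e^{\varkappa^2\delta}H(x_0,r)\le \frac{\delta}{n}e^{\varkappa^2\delta}H(x_0,r)$, not $\frac{\delta^n}{n}e^{\varkappa^2\delta}H(x_0,r)$. This is not a defect of your argument: the constant $\delta^n/n$ in the stated lemma is an artifact of the paper's coarea slip, and in fact the inequality as literally stated fails for small $\delta$ (take $\sigma\equiv 1$, $u\equiv 1$, $r$ close to $\delta$, so $K/H=r/n\approx\delta/n$, which exceeds $\frac{\delta^n}{n}e^{\delta\varkappa^2}$ once $\delta$ is small). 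Your bound with $\delta/n$ is the correct one, and it is all that is needed in the subsequent application (Lemma~\ref{lemma-a4}), where only a lower bound on $H$ in terms of $K$ is used and the precise power of $\delta$ is irrelevant to the final estimate.
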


\begin{proof}
%Since
%\[
%H(x_0,r)=\frac{1}{r}\int_{\partial B(x_0,r)}\sigma(x)u^2(x)(x-x_0)\cdot \nu (x)dS(x),
%\]
%we find by applying the divergence theorem
%\begin{equation}\label{(1)}
%H(x_0,r)=\frac{1}{r}\int_{B(x_0,r)}\mathrm{div}\left(\sigma(x)u^2(x)(x-x_0)\right)dx.
%\end{equation}
%Hence
%\begin{align*}
%\partial_rH(x_0,r)&=-\frac{1}{r}H(x_0,r)+\frac{1}{r}\int_{\partial B(x_0,r)}\mathrm{div}\left(\sigma(x)u^2(x)(x-x_0)\right)dS(x)
%\\
%&=\frac{n-1}{r}H(r)+\int_{\partial B(x_0,r)}\partial _\nu \sigma(x)u^2(x)dS(x)
%\\
%&\hskip 3cm +2\int_{\partial B(x_0,r)}\sigma(x)\partial _\nu u(x)u(x)dS(x).
%\end{align*}
%But
%\begin{align*}
%\int_{\partial B(x_0,r)}\sigma(x)\partial _\nu &u(x)u(x)dS(x)
%\\
%&=\int_{B(x_0,r)}\mathrm{div}(\sigma(x)\nabla u(x))u+\int_{B(x_0,r)}\sigma(x)|\nabla u|^2dx
%\\
%&=\int_{B(x_0,r)}\sigma(x)|\nabla u(x)|^2dx=D(x_0,r).
%\end{align*}
%Therefore
Taking into account that $H(x_0,r)\ge 0$ and $D(x_0,r)\ge 0$, we obtain from identity \eqref{a1}
\begin{align*}
\partial_rH(x_0,r)%&=\frac{n-1}{r}H(x_0,r)+2D(x_0,r)+\int_{\partial B(x_0,r)}\partial _\nu \sigma(x)u^2(x)dS(x)
%\\
&\ge \int_{\partial B(x_0,r)}\partial _\nu \sigma(x)u^2(x)dS(x)
\\
&\ge \int_{\partial B(x_0,r)}\frac{\partial _\nu \sigma(x)}{\sigma(x)}\sigma(x)u^2(x)dS(x)\ge -\varkappa^2 H(x_0,r).
\end{align*}
%where we used that $H(x_0,r)\ge 0$ and $D(x_0,r)\ge 0$.
Consequently $r\rightarrow e^{r\varkappa^2}H(x_0,r)$ is non decreasing and then
\begin{align*}
\int_0^r H(x_0,t) dt&\le\int_0^r e^{t \varkappa^2}H(x_0,t) dt
\\
&\le \int_0^r e^{r\varkappa^2}H(x_0,r) dt\le re^{r\varkappa^2}H(x_0,r).
\end{align*}
As
\[
K(x_0,r)=\int_0^r  H(x_0,t) dt,
\]
we end up getting
\[
K(x_0,r)\le re^{r\varkappa^2}H(x_0,r).
\]
This completes the proof.
\end{proof}

Now straightforward computations yield, for $x_0\in \mathcal{O}^\delta$ and $0<r<\delta$,
\begin{equation}\label{a4}
\frac{\partial_rN(x_0,r)}{N(x_0,r)}=\frac{1}{r}+\frac{\partial_rD(x_0,r)}{D(x_0,r)}-\frac{\partial_rH(x_0,r)}{H(x_0,r)}.
\end{equation}

\begin{lemma}\label{lemma-bff}
For $x_0\in \mathcal{O}^\delta$ and $0<r<\delta$, we have
\[
N(x_0,r) \le e^{2\varkappa^2 \delta}N(x_0,\delta ).
\]
%with $\mu = \varkappa^2 \left(1+\chi(\mathcal{O})\right)$.

\end{lemma}

\begin{proof}
We have from formulas \eqref{a1} and \eqref{a2} and identity \eqref{a4}
\begin{align}
\frac{\partial_rN(x_0,r)}{N(x_0,r)}&=\frac{\tilde{D}(x_0,r)}{rD(x_0,r)}-\frac{\tilde{H}(x_0,r)}{H(x_0,r)}+ 2\frac{\hat{H}(x_0,r)}{D(x_0,r)}-2\frac{D(x_0,r)}{H(x_0,r)} \label{a5}
\\
&=\frac{\tilde{D}(x_0,r)}{rD(x_0,r)}-\frac{\tilde{H}(x_0,r)}{H(x_0,r)}+2 \frac{\hat{H}(x_0,r)H(x_0,r)-D(x_0,r)^2}{D(x_0,r)H(x_0,r)}.\nonumber
\end{align}
But from \eqref{a6} we have
\[
D(x_0,r)=\int_{\partial B(x_0,r)}\sigma(x) u(x)\partial _\nu u(x)dS(x).
\]
Then we find by applying Cauchy-Schwarz's inequality
\[
D(x_0,r)^2\le \left(\int_{\partial B(x_0,r)}\sigma(x) u^2(x)dS(x)\right)\left( \int_{\partial B(x_0,r)}\sigma(x) (\partial _\nu u)^2(x)dS(x) \right).
\]
That is
\begin{equation}\label{(1.1)}
D^2(x_0,r)\le H(x_0,r)\hat{H}(x_0,r).
\end{equation}
This and \eqref{a5} lead
\begin{equation}\label{a7}
\frac{\partial_rN(x_0,r)}{N(x_0,r)}\ge \frac{\tilde{D}(x_0,r)}{rD(x_0,r)}-\frac{\tilde{H}(x_0,r)}{H(x_0,r)}.
\end{equation}
On the other hand
\begin{equation}\label{a8}
\left|\tilde{H}(x_0,r)\right|\le \varkappa \|\nabla a \|_\infty H(x_0,r)\le  \varkappa^2 H(x_0,r),
\end{equation}
and similarly
\begin{equation}\label{a9}
\left|\tilde{D}(x_0,r)\right|\le  \varkappa^2 r D(x_0,r).
\end{equation}
In light of \eqref{a7}, \eqref{a8} and \eqref{a9}, we derive
\[
\frac{\partial_rN(x_0,r)}{N(x_0,r)}\ge -2\varkappa^2,
\]
that is to say
\[
\partial _r( e^{2\varkappa^2 r}N(x_0,r))\ge 0.
\]
Consequently
\[
N(x_0,r)\le e^{2\varkappa^2 (\delta -r)}N(x_0,\delta )\le e^{2\varkappa^2\delta}N(x_0,\delta),
\]
as expected.
\end{proof}

\subsection{Polynomial lower bound}

\begin{lemma}\label{lemma-a4}
There exist a universal constant $\varpi$ and two constants $c=c(n,\varkappa)>0$ and $0<\gamma =\gamma (n,\varkappa)<1$ so that if
\[
\mathcal{C}_0(h)=M\varpi\varkappa^4(1+\mathbf{d})\delta^{-1}e^{3\varkappa^2\delta+[2\ln(cM/\eta)/\gamma] e^{[6n|\ln \gamma|] \mathfrak{c}h }},\quad h>0,
\]
then
\bea
\|N(u)(x, \cdot)\|_{L^\infty (0,\delta)}\le \mathcal{C}_0(|x-x_0|/\delta ),
\eea
for any $u\in \mathscr{S}(\mathcal{O}, x_0, M,\eta ,\delta/3)$, where $\mathfrak{c}=\mathfrak{c}_{\mathcal{O}}$ is as in Lemma \ref{Glemma}.
\end{lemma}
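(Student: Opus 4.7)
The plan is to first use Lemma \ref{lemma-bff}, which yields $N(u)(x,r) \le e^{\mu\delta}N(u)(x,\delta)$ for all $r\in(0,\delta)$, to reduce the task to producing an upper bound for the single quantity $N(u)(x,\delta) = \delta\, D(u)(x,\delta)/H(u)(x,\delta)$. The numerator is controlled trivially: $D(u)(x,\delta)\le\varkappa M^2$ as a consequence of $\|\nabla u\|_{L^2(\mathcal{O})}\le M$. Hence the heart of the argument is a quantitative lower bound on the denominator $H(u)(x,\delta)$.

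I propose to obtain this lower bound in four steps. First, applying Lemma \ref{lemmalb1} with the ball radius $\delta/3$ (admissible because $u\in\mathscr{S}(\mathcal{O},x_0,M,\eta,\delta/3)$ and $x\in\mathcal{O}^\delta = \mathcal{O}^{3(\delta/3)}$) yields
\[
\|\nabla u\|_{L^2(B(x,\delta/3))} \ge \exp\Bigl(-[\ln(cM/\eta)/\gamma_1]\,e^{[6n|\ln\gamma_1|\mathfrak{c}]|x-x_0|/\delta}\Bigr),
\]
where $\gamma_1$ denotes the constant of Lemma \ref{lemmalb1}. Second, the standard Caccioppoli inequality for the equation $L_\sigma u=0$ (obtained by testing against $\varphi^2 u$ with $\varphi$ a cutoff supported in $B(x,2\delta/3)$ and equal to $1$ on $B(x,\delta/3)$) promotes this into $\|u\|_{L^2(B(x,2\delta/3))}^2 \ge c(n,\varkappa)\,\delta^2\|\nabla u\|_{L^2(B(x,\delta/3))}^2$. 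Third, since $K(u)(x,2\delta/3)\ge\varkappa^{-1}\|u\|^2_{L^2(B(x,2\delta/3))}$, Lemma \ref{lemma-a3} applied at radius $2\delta/3$ converts this into a lower bound for $H(u)(x,2\delta/3)$. Fourth, the monotonicity of $r\mapsto e^{\varkappa^2 r}r^{-(n-1)}H(u)(x,r)$---a by-product of the identity in Lemma \ref{lemma-a1} combined with the pointwise bounds $|\tilde H|\le\varkappa^2 H$ and $D\ge 0$---transfers the bound from radius $2\delta/3$ to radius $\delta$. Chaining these four steps with $D(u)(x,\delta)\le\varkappa M^2$ then produces
\[
N(u)(x,\delta) \le C(n,\varkappa)\,M^2\,\delta^{n-1}e^{C'\delta}\exp\Bigl([2\ln(cM/\eta)/\gamma_1]\,e^{[6n|\ln\gamma_1|\mathfrak{c}]|x-x_0|/\delta}\Bigr).
\]

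The main difficulty is then the cosmetic step of recasting this as $\mathcal{C}_0(|x-x_0|/\delta)$, whose prefactor is $M$ rather than $M^2$. The key observation is that the inner double-exponential in $\mathcal{C}_0$ evaluated at $h=0$ already equals $(cM/\eta)^{2/\gamma}$, a polynomial in $M$ of degree $2/\gamma > 2$. Choosing $\gamma<\gamma_1$ small enough (for instance $\gamma=\gamma_1/2$) and $c$ correspondingly large, both depending only on $n$ and $\varkappa$, creates sufficient polynomial headroom in $M$ inside this double-exponential to absorb the surplus factor $M$ (after reducing without loss of generality to the case $\eta\le 1$, which only enlarges $\mathcal{C}_0$) together with $\delta^{n-1}$ and $e^{C'\delta}$, provided also that $c$ in the prefactor $c(1+\mathbf{d})\delta$ is picked to dominate $\mu+C'$, with $\mu$ the constant of Lemma \ref{lemma-bff}. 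Reinserting Lemma \ref{lemma-bff} then delivers the claimed $L^\infty$ bound for $N(u)(x,\cdot)$ on $(0,\delta)$.
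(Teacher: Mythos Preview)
Your argument is correct and follows essentially the same route as the paper: lower bound $H(u)(x,\delta)$ via Lemma~\ref{lemmalb1}, Caccioppoli, and Lemma~\ref{lemma-a3}, then combine with the trivial bound on $D(u)(x,\delta)$ and Lemma~\ref{lemma-bff}. The only minor difference is that the paper runs Caccioppoli directly from $B(x,\delta/3)$ to $B(x,\delta)$ and applies Lemma~\ref{lemma-a3} at the endpoint radius $\delta$, whereas you stop at $2\delta/3$ and add a short monotonicity step to pass to $\delta$; this is harmless and arguably cleaner since it keeps Lemma~\ref{lemma-a3} strictly inside its stated range. Your explicit discussion of absorbing the surplus factor $M$ into the double-exponential by shrinking $\gamma$ is also more careful than the paper, which tacitly writes $\|\nabla u\|_{L^2(\mathcal{O})}$ in place of $\|\nabla u\|_{L^2(\mathcal{O})}^2$ at that step.
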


\begin{proof}
Pick $x\in \mathcal O^{\delta}$. Then from Lemma \ref{lemmalb1}
\[
\|\nabla u\|_{L^2(B(x,\delta /3) )} \ge  e^{-[\ln(cM/\eta)/\gamma] e^{[6n|\ln \gamma|] \mathfrak{c}|x-x_0|/\delta }},
\]
for some constant $c=c(n,\varkappa)$ and $0<\gamma =\gamma (n,\varkappa))<1$.

On the other hand, we establish in a quite classical manner the following Caccioppoli's inequality
\[
\|\nabla u\|_{L^2(B(x,\delta /3) )}^2\le \frac{\varpi \varkappa^2 (1+\mathbf{d})}{\delta^2}\| u\|_{L^2(B(x,\delta ) )}^2,
\]
where $\varpi$ is a universal constant. Therefore
\begin{equation}\label{a12.0}
\| u\|_{L^2(B(x,\delta ) )}^2\ge \tilde{\mathcal{C}}_0(|x-x_0|/\delta),
\end{equation}
where
\begin{equation}\label{a12.1}
\tilde{\mathcal{C}}_0(h)=\frac{\delta^2}{\varpi \varkappa^2 (1+\mathbf{d})}e^{-[2\ln(cM/\eta)/\gamma] e^{[6n|\ln \gamma |\mathfrak{c}]h}},\quad h>0.
\end{equation}

Since $ K(u)(x,\delta )\ge \varkappa ^{-1}\| u\|_{L^2(B(x,\delta ) )}^2$, we find
\begin{equation}\label{a12}
K(u)(x,\delta )\ge   \frac{\delta^2}{\varpi \varkappa^3 (1+\mathbf{d})}e^{-[2\ln(cM/\eta)/\gamma] e^{[6n|\ln \gamma|] \mathfrak{c}|x-x_0|/\delta }}.
\end{equation}

In light of Lemma \ref{lemma-a3}, we derive from \eqref{a12}
\begin{equation}\label{a13}
H(u)(x,\delta )\ge \frac{\delta e^{-\varkappa ^2\delta}}{\varpi \varkappa^3 (1+\mathbf{d})}e^{-[2\ln(cM/\eta)/\gamma] e^{[6n|\ln \gamma|] \mathfrak{c}|x-x_0|/\delta }}.
\end{equation}

In light of Lemma \ref{lemma-bff}, we get
\[
N(x,r)\le \varkappa e^{2\varkappa^2\delta}\frac{ \| \nabla u\|_{L^2(\mathcal{O})}}{H(u)(x,\delta)},\quad 0<r < \delta,
\]

This inequality and \eqref{a13} give, where $c=c(n,\varkappa)$ is a constant, 
\[
N(x,r)\le M\varpi\varkappa^4(1+\mathbf{d})\delta^{-1}e^{3\varkappa^2\delta+[2\ln(cM/\eta)/\gamma] e^{[6n|\ln \gamma|] \mathfrak{c}|x-x_0|/\delta }}
,\quad 0<r < \delta,
\]
which is the expected inequality.
\end{proof}

\begin{proposition}\label{proposition-a2}
Let $\mathcal{C}_0$ be as in Lemma \ref{lemma-a4}, $\tilde{\mathcal{C}}_0$ as in \eqref{a12.1} and set
\begin{align}
&\mathcal{C}_1(h)=2\mathcal{C}_0(h)+n,\quad h>0,\label{a13.0}
\\
&\tilde{\mathcal{C}}_2(h)=\varkappa^{-2}e^{-\varkappa^2\delta}\tilde{\mathcal{C}}_0(h),\quad h>0.\label{a13.1}
\end{align} 
If $u\in \mathscr{S}(\mathcal{O}, x_0, M,\eta ,\delta/3)$ then
\[
\tilde{\mathcal{C}}_2(|x-x_0|/\delta)\left(\frac{r}{\delta}\right)^{\mathcal{C}_1(|x-x_0|/\delta)}\le \|u\|^2_{L^2(B(x, r))},\quad x\in \mathcal{O}^\delta ,\; 0<r <\delta .
\]
\end{proposition}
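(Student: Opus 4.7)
The plan is to feed the frequency bound from Lemma~\ref{lemma-a4}, namely $\|N(u)(x,\cdot)\|_{L^\infty(0,\delta)}\le \mathcal{C}_0(|x-x_0|/\delta)$, into the first-order differential identity for $H(u)(x,\cdot)$ derived in Lemma~\ref{lemma-a1}, and then to pass from the sphere integral $H$ to the ball integral $K$ by elementary radial integration. The lower bound on $H(u)(x,\delta)$ coming from inequality~\eqref{a13} will serve as the initial value.

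First, starting from formula~\eqref{a1}, I rewrite it in logarithmic form by using $D(u)(x,r)/H(u)(x,r)=N(u)(x,r)/r$ and the pointwise control $|\tilde H(u)(x,r)|\le \varkappa\|\nabla\sigma\|_{\infty}H(u)(x,r)\le \varkappa^2 H(u)(x,r)$, obtaining
\[
\frac{\partial_r H(u)(x,r)}{H(u)(x,r)}\le \frac{n-1+2N(u)(x,r)}{r}+\varkappa^2,\quad 0<r<\delta.
\]
Substituting the uniform bound $N(u)(x,r)\le \mathcal{C}_0(|x-x_0|/\delta)$ from Lemma~\ref{lemma-a4} and integrating from $r$ to $\delta$ yield
\[
H(u)(x,r)\ge H(u)(x,\delta)\,e^{-\varkappa^2\delta}\left(\frac{r}{\delta}\right)^{n-1+2\mathcal{C}_0(|x-x_0|/\delta)}.
\]

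Next, I apply inequality~\eqref{a13} to replace $H(u)(x,\delta)$ by its explicit positive lower bound in terms of the class parameters $M,\eta,\delta,\mathbf{d}$ and $|x-x_0|/\delta$. Then I pass from $H$ to the volume quantity: since
\[
K(u)(x,r)=\int_0^r H(u)(x,s)\,ds\quad\text{and}\quad \|u\|^2_{L^2(B(x,r))}\ge \varkappa^{-1}K(u)(x,r),
\]
integrating the polynomial lower bound for $H$ over $s\in(0,r)$ produces a factor $\delta/(n+2\mathcal{C}_0)$ and the exponent $n+2\mathcal{C}_0(|x-x_0|/\delta)$ in $r/\delta$. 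Gathering the resulting constants into $\tilde{\mathcal{C}}_2$ defined in~\eqref{a13.1} and into $\mathcal{C}_1=\mathcal{C}_0+n-1$ defined in~\eqref{a13.0} (the factor $2$ in the exponent and the factor $n$ versus $n-1$ are absorbed by enlarging the constant $c$ appearing inside $\mathcal{C}_0$, since $r/\delta<1$) gives the announced inequality.

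The main obstacle is the integration step for $H$: one must be careful that the bound on $\tilde H/H$ is genuinely uniform in $r$, so that the exponential factor $e^{-\varkappa^2\delta}$ survives rather than blowing up, and that the upper bound $N(u)(x,r)\le \mathcal{C}_0(|x-x_0|/\delta)$ is valid on the whole interval $(0,\delta)$ and not merely at $r=\delta$ (this is precisely the content of Lemma~\ref{lemma-a4} thanks to the assumption $u\in\mathscr{S}(\mathcal O, x_0, M,\eta,\delta/3)$). Everything else is bookkeeping of explicit constants in the class $\mathscr{S}(\mathcal{O},x_0,M,\eta,\delta/3)$.
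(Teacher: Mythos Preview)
Your approach is essentially the same as the paper's: both feed the bound $N\le\mathcal C_0$ from Lemma~\ref{lemma-a4} into the differential identity~\eqref{a1} for $H$, integrate, and then pass from $H$ to the $L^2$-norm on balls. The only technical difference is the order of operations: the paper integrates~\eqref{a1} between the scaled radii $sr$ and $s\delta$ and then integrates the resulting inequality $H(x,s\delta)\le e^{\varkappa^2\delta}(\delta/r)^{\mathcal C_0+n-1}H(x,sr)$ over $s\in(0,1)$, obtaining directly a comparison $\|u\|^2_{L^2(B(x,\delta))}\lesssim(\delta/r)^{\mathcal C_0+n-1}\|u\|^2_{L^2(B(x,r))}$ to be combined with~\eqref{a12.0}; you instead bound $H(x,r)$ from below by $H(x,\delta)$, invoke~\eqref{a13}, and then integrate $H(x,s)$ over $s\in(0,r)$. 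Your remark about absorbing the factor $2$ into the generic constant of $\mathcal C_0$ is legitimate in this context (and indeed the paper's own displayed inequality $\partial_r\ln(H/r^{n-1})\le\varkappa^2+N/r$ has dropped that same factor of $2$), since only the form of the dependence on $|x-x_0|/\delta$ is used downstream.
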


\begin{proof}
Observing that, where $H=H(u)$,
\[
\partial _r\left(\ln \frac{H(x,r)}{r^{n-1}}\right)=\frac{\partial _rH(x,r)}{H(x,r)}-\frac{n-1}{r},
\]
we get from Lemma \ref{lemma-a4}, \eqref{a1} and the fact that $|\tilde{H}(x,r)|\le \varkappa^2 H(x,r)$,
\[
\partial _r\left(\ln \frac{H(x,r)}{r^{n-1}}\right)\le   \varkappa^2+\frac{2N(x,r)}{r}\le   \varkappa^2+
\frac{2\mathcal{C}_0(|x-x_0|/\delta)}{r},\quad 0<r<\delta,
\]
Thus
\[
\int_{sr}^{s\delta} \partial _t\left(\ln \frac{H(x,t)}{t^{n-1}}\right)dt=\ln \frac{H(x,s\delta)r^{n-1}}{H(x,sr)\delta ^{n-1}} 
\le  \varkappa^2 (\delta -r)s+ 2\mathcal{C}_0(|x-x_0|/\delta)\ln \frac{\delta}{r},
\]
for $0<s<1$ and  $0<r<\delta$. Hence
\[
H(x,s\delta)\le  e^{\varkappa^2\delta}\left(\frac{\delta}{r}\right)^{\mathcal{C}_1(|x-x_0|/\delta)-1}H(x,sr),
\]
and then
\begin{align*}
\|u\|_{L^2(B(x,\delta))}^2&\le \varkappa \delta\int_0^1H(x,s\delta)ds
\\
&\le \varkappa \delta e^{\varkappa^2\delta}\left(\frac{\delta}{r}\right)^{\mathcal{C}_1(|x-x_0|/\delta)-1}\int_0^1H(x,rs )ds
\\
&\le \varkappa ^2e^{\varkappa^2\delta}\left(\frac{\delta}{r}\right)^{\mathcal{C}_1(|x-x_0|/\delta)} \|u\|_{L^2(B(x,r ))}^2.
\end{align*}
Combined with \eqref{a12.0} this estimate yields in a straightforward manner
\[
\varkappa^{-2}e^{-\varkappa^2\delta}\tilde{\mathcal{C}}_0(|x-x_0|/\delta)\left(\frac{r}{\delta}\right)^{\mathcal{C}_1(|x-x_0|/\delta)}\le \|u\|^2_{L^2(B(x, r))}.
\]
This is the expected inequality.
\end{proof}

For a bounded domain $D$, we denote the first non zero eigenvalue of the Laplace-Neumann operator on $D$ by $\mu _2(D)$. Since $\mu _2(B(x_0,r))=\mu_2(B(0,1))/r^2$, we get by applying Poincar\'e-Wirtinger's inequality
\begin{align}
\|w-\{w\}\|_{L^2(B(x,r))}^2 &\le \frac{1}{\mu _2(B(x,r))}\|\nabla w\|_{L^2(B(x,r))}^2 \label{a14}
\\
&\le \frac{r^2}{\mu _2(B(0,1))}\|\nabla w\|_{L^2(B(x,r))}^2,\nonumber
\end{align}
for any $w\in H^1(B(x,r))$, where $\{w\}=\frac{1}{|B(x,r)|}\int_{B(x,r)}w(x)dx$.
 
Noting that $\mathscr{S}(\mathcal{O}, x_0, M, \eta ,\delta/3)$ is invariant under the transformation $u\rightarrow u-\{u\}$, we can state the following consequence of Proposition \ref{proposition-a2}

\begin{corollary}\label{corollary-a1}
With the notations of Proposition \ref{proposition-a2}, if $u\in \mathscr{S}(\mathcal{O}, x_0, M,\eta ,\delta/3)$ then
\[
\mathcal{C}_2(|x-x_0|/\delta)\left(\frac{r}{\delta}\right)^{\mathcal{C}_1(|x-x_0|/\delta)}\le \|\nabla u\|^2_{L^2(B(x, r))},\quad x\in \mathcal{O}^\delta ,\; 0<r <\delta ,
\]
with 
\begin{equation}\label{a13.2}
\mathcal{C}_2(h)=\mu _2(B(0,1))\delta^{-2}\tilde{\mathcal{C}}_2(h),\quad h>0,
\end{equation}
with $\tilde{\mathcal{C}}_2$ as in Proposition \ref{proposition-a2}.
\end{corollary}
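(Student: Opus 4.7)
The proof should be a short application of Proposition \ref{proposition-a2} combined with the Poincar\'e--Wirtinger inequality \eqref{a14}. The key observation is the one already pointed out in the paragraph preceding the statement: the class $\mathscr{S}(\mathcal{O},x_0,M,\eta,\delta/3)$ is invariant under the substitution $u\mapsto u-\{u\}_{B(x,r)}$, since constants lie in the kernel of $L_\sigma$ and gradients are unaffected by adding a constant.

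The plan is as follows. Pick $u\in \mathscr{S}(\mathcal{O},x_0,M,\eta,\delta/3)$ and, given $x\in \mathcal{O}^\delta$ and $0<r<\delta$, set
\[
w=u-\{u\}_{B(x,r)},\qquad \{u\}_{B(x,r)}=\frac{1}{|B(x,r)|}\int_{B(x,r)}u(y)\,dy.
\]
Since $\nabla w=\nabla u$ and $L_\sigma w=L_\sigma u=0$, the function $w$ also belongs to $\mathscr{S}(\mathcal{O},x_0,M,\eta,\delta/3)$. Applying Proposition \ref{proposition-a2} to $w$ yields
\[
\tilde{\mathcal{C}}_2(|x-x_0|/\delta)\left(\frac{r}{\delta}\right)^{\mathcal{C}_1(|x-x_0|/\delta)}\le \|w\|^2_{L^2(B(x,r))},
\]
where I have already rewritten the exponent $\mathcal{C}_0(|x-x_0|/\delta)+n-1$ as $\mathcal{C}_1(|x-x_0|/\delta)$ by definition \eqref{a13.0}.

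Next, since $\{w\}_{B(x,r)}=0$, the Poincar\'e--Wirtinger estimate \eqref{a14} applied to $w$ reads
\[
\|w\|^2_{L^2(B(x,r))}\le \frac{r^2}{\mu_2(B(0,1))}\|\nabla w\|^2_{L^2(B(x,r))}=\frac{r^2}{\mu_2(B(0,1))}\|\nabla u\|^2_{L^2(B(x,r))}.
\]
Combining the two previous inequalities gives
\[
\frac{\mu_2(B(0,1))}{r^2}\tilde{\mathcal{C}}_2(|x-x_0|/\delta)\left(\frac{r}{\delta}\right)^{\mathcal{C}_1(|x-x_0|/\delta)}\le \|\nabla u\|^2_{L^2(B(x,r))}.
\]
Finally, since $0<r<\delta$ we have $r^{-2}\ge \delta^{-2}$, so the left-hand side is bounded below by
\[
\mu_2(B(0,1))\delta^{-2}\tilde{\mathcal{C}}_2(|x-x_0|/\delta)\left(\frac{r}{\delta}\right)^{\mathcal{C}_1(|x-x_0|/\delta)}=\mathcal{C}_2(|x-x_0|/\delta)\left(\frac{r}{\delta}\right)^{\mathcal{C}_1(|x-x_0|/\delta)},
\]
by the definition \eqref{a13.2} of $\mathcal{C}_2$. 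This delivers the desired inequality.

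There is no real obstacle here; the whole content is contained in Proposition \ref{proposition-a2}. The only things to verify carefully are the invariance of $\mathscr{S}$ under the subtraction of the local mean (which is immediate), and that the factor $r^{-2}$ produced by Poincar\'e--Wirtinger can be absorbed into the constant by replacing it with the uniform factor $\delta^{-2}$, which is exactly how the constant $\mathcal{C}_2$ was normalized in \eqref{a13.2}.
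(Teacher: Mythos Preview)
Your proof is correct and matches the paper's approach exactly. The paper does not spell out the argument but only records the observation that $\mathscr{S}(\mathcal{O},x_0,M,\eta,\delta/3)$ is invariant under $u\mapsto u-\{u\}$ and calls the corollary a consequence of Proposition~\ref{proposition-a2} together with \eqref{a14}; your write-up is precisely the intended derivation, including the replacement of $r^{-2}$ by $\delta^{-2}$ to obtain the stated constant $\mathcal{C}_2$.
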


It is important to remark that the argument we used to obtain Corollary \ref{corollary-a1} from Proposition \ref{proposition-a2} is no longer valid if we substitute $L_\sigma$ by $L_\sigma$ plus a multiplication operator by a function $\sigma_0$.

The following consequence of the preceding corollary will be useful in the proof of Theorem \ref{theorem1}.

\begin{lemma}\label{lemma-a2}
Let $\omega \Subset \mathcal{O}$  and set $\delta=\mbox{dist}(\omega ,\partial \mathcal{O})$. Let $u\in \mathscr{S}(\mathcal{O}, x_0, M,\eta ,\delta/3)$ and $f\in C^{0,\alpha}(\overline{\mathcal{O}})$. Then we have
\begin{equation}\label{E2}
\|f\|_{L^\infty (\omega )}\le \hat{\mathcal{C}}_3 \|f\|_{C^{0,\alpha}(\overline{\mathcal{O}})}^{1-\hat{\mu}} \|f|\nabla u|^2\|_{L^1(\mathcal{O} )}^{\hat{\mu}},
\end{equation}
with 
\begin{align*}
&\hat{\mu} = \frac{\alpha}{\max_{x\in \overline{\mathcal{O}}}\mathcal{C}_1(|x-x_0|/\delta)+\alpha},
\\
&\hat{\mathcal{C}}_3=\max\left( 2\delta^{\alpha}(\max\left(1,(\hat{\mathcal{C}}_2\delta^\alpha)^{-1}\right), \max\left(1,M^2\right)(\hat{\mathcal{C}}_2\delta^\alpha)^{-1}\right),
\end{align*}
where $\hat{\mathcal{C}}_2=\max_{x\in \overline{\mathcal{O}}}\mathcal{C}_2(|x-x_0|/\delta)$ with $\mathcal{C}_2$ is as in Corollary \ref{corollary-a1}.
\end{lemma}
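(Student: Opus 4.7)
The strategy is to localise near the point where $|f|$ attains its maximum on $\overline{\omega}$, freeze $f$ up to a factor $1/2$ via its H\"older regularity, and plug in the polynomial gradient bound of Corollary \ref{corollary-a1}. First, I would pick $x_\star\in \overline{\omega}$ with $|f(x_\star)|=\|f\|_{L^\infty(\omega)}$, which exists since $f\in C^{0,\alpha}(\overline{\mathcal O})$. Because $\omega\Subset\mathcal O$ and $\delta=\mathrm{dist}(\omega,\partial\mathcal O)$, the point $x_\star$ lies in $\overline{\mathcal O^\delta}$, so Corollary \ref{corollary-a1} applies there for every $r<\delta$. The H\"older estimate $|f(y)|\ge |f(x_\star)|-[f]_\alpha|y-x_\star|^\alpha$ then singles out the radius
\[
r_0=\bigl(|f(x_\star)|/(2[f]_\alpha)\bigr)^{1/\alpha},
\]
on whose ball $B(x_\star,r_0)$ one has $|f|\ge |f(x_\star)|/2$; the degenerate case $[f]_\alpha=0$ is trivial.

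Then I would split into two cases. If $r_0\le \delta$, Corollary \ref{corollary-a1} applied at $(x_\star,r_0)$, after replacing the $x$-dependent constants $\mathcal{C}_1(|x_\star-x_0|/\delta)$ and $\mathcal{C}_2(|x_\star-x_0|/\delta)$ by uniform worst-case values (namely $\hat{\mathcal{C}}_1:=\max_{\overline{\mathcal O}}\mathcal{C}_1(|\cdot-x_0|/\delta)$ and the matching infimum of $\mathcal{C}_2$ over $\overline{\mathcal O}$, which we still denote $\hat{\mathcal{C}}_2$), yields
\[
\|f|\nabla u|^2\|_{L^1(\mathcal O)}\ge \int_{B(x_\star,r_0)}|f||\nabla u|^2\ge \frac{|f(x_\star)|}{2}\hat{\mathcal{C}}_2\left(\frac{|f(x_\star)|}{2[f]_\alpha \delta^\alpha}\right)^{\hat{\mathcal{C}}_1/\alpha}.
\]
Isolating $|f(x_\star)|$, using $[f]_\alpha \le \|f\|_{C^{0,\alpha}(\overline{\mathcal O})}$ and the elementary inequality $a^{-\hat\mu}\le \max(1,a^{-1})$ for $a>0$ and $0<\hat\mu<1$, produces the first argument of the maximum defining $\hat{\mathcal{C}}_3$, with exponent $\hat\mu=\alpha/(\hat{\mathcal{C}}_1+\alpha)$.

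If on the other hand $r_0>\delta$, then $[f]_\alpha\delta^\alpha < |f(x_\star)|/2$ propagates the pointwise bound $|f|\ge |f(x_\star)|/2$ to all of $B(x_\star,\delta)$. Letting $r\uparrow\delta$ in Corollary \ref{corollary-a1} gives $\|\nabla u\|^2_{L^2(B(x_\star,\delta))}\ge \hat{\mathcal{C}}_2$, hence
\[
\|f|\nabla u|^2\|_{L^1(\mathcal O)}\ge \tfrac12 |f(x_\star)|\hat{\mathcal{C}}_2.
\]
To cast this linear bound in the desired product form I would split $\|f|\nabla u|^2\|_{L^1}=\|f|\nabla u|^2\|_{L^1}^{1-\hat\mu}\|f|\nabla u|^2\|_{L^1}^{\hat\mu}$ and dominate the first factor trivially by $(\|f\|_\infty M^2)^{1-\hat\mu}\le \max(1,M^2)\|f\|_{C^{0,\alpha}(\overline{\mathcal O})}^{1-\hat\mu}$, using $\|\nabla u\|_{L^2(\mathcal O)}\le M$ from the definition of $\mathscr{S}(\mathcal O,x_0,M,\eta,\delta/3)$. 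This yields the second argument of the maximum defining $\hat{\mathcal{C}}_3$, and combining the two cases finishes the proof.

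The main obstacle is the bookkeeping: the constants $\mathcal{C}_1(|x_\star-x_0|/\delta)$ and $\mathcal{C}_2(|x_\star-x_0|/\delta)$ produced by Corollary \ref{corollary-a1} depend on the location of $x_\star$, so in order to obtain a \emph{single} H\"older exponent $\hat\mu$ valid for all admissible $x_\star\in \overline{\omega}$, one must pass to uniform constants over $\overline{\mathcal O}$; this is why the maxima in the definitions of $\hat\mu$ and $\hat{\mathcal{C}}_3$ appear in the statement. Beyond this uniformisation, only elementary manipulations of exponents are required to recombine the two cases into the single bound of the lemma.
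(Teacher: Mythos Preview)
Your argument is correct and rests on the same two ingredients as the paper's proof: the H\"older regularity of $f$ and the polynomial lower bound on $\|\nabla u\|_{L^2(B(x,r))}$ from Corollary~\ref{corollary-a1}. The only difference is in the execution. The paper normalises $\|f\|_{C^{0,\alpha}}=1$, uses the \emph{additive} H\"older inequality $|f(x)|\le |f(y)|+r^\alpha$, integrates against $|\nabla u|^2$ over $B(x,r)$ and divides, arriving at
\[
|f(x)|\le \mathcal{C}_2^{-1}\Bigl(\tfrac{\delta}{r}\Bigr)^{\mathcal{C}_1}\|f|\nabla u|^2\|_{L^1(\mathcal O)}+r^\alpha,\qquad 0<r<\delta,
\]
and then optimises in $r$ (the ``very classical argument''). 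You instead freeze $|f|$ multiplicatively on $B(x_\star,r_0)$ with $r_0=(|f(x_\star)|/2[f]_\alpha)^{1/\alpha}$ and case-split on whether $r_0\le\delta$; your $r_0$ is essentially the minimiser that the paper's optimisation would produce, and your Case~2 absorbs the boundary term that the paper's optimisation handles by restricting to $s<1$. Both routes give the same exponent $\hat\mu=\alpha/(\hat{\mathcal C}_1+\alpha)$ and constants of the same shape. Your remark that $\hat{\mathcal C}_2$ must be the infimum (not the maximum) of $\mathcal{C}_2(|x-x_0|/\delta)$ is well taken; the paper's own proof uses $\mathcal{C}_2^{-1}$ and then passes to the sup of that quantity, consistent with your reading.
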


\begin{proof}
By homogeneity it is enough to consider those functions $f\in C^{0,\alpha}(\overline{\mathcal{O}})$ satisfying $\|f\|_{C^{0,\alpha}(\overline{\mathcal{O}})}=1$. Let $\mathcal{C}_1$ and $\mathcal{C}_2$ be respectively as in \eqref{a13.0} and \eqref{a13.2}. Let $u\in \mathscr{S}(\mathcal{O}, x_0, M,\eta ,\delta/3)$ and $f\in C^{0,\alpha}(\overline{\mathcal{O}})$ satisfying $\|f\|_{C^{0,\alpha}(\overline{\mathcal{O}})}=1$. Pick then ${x}\in \overline{\omega}$. From Corollary \ref{corollary-a1}, we have 
\begin{equation}\label{ii1}
\mathcal{C}_2(|x-x_0|/\delta )\left(\frac{r}{\delta}\right)^{\mathcal{C}_1(|x-x_0|/\delta )}\le \|\nabla u\|_{L^2(B(x, r ))}^2,\quad  0<r<\delta .
\end{equation}
On the other hand, it is straightforward to check that
\[
|f({x})|\le |f(y)|+r^\alpha ,\quad y\in B({x},r).
\]
Whence
\begin{align*}
|f(x)|\int_{B({x},r)}|\nabla u(y)|^2 dy\le \int_{B({x},r)}&|f(y)||\nabla u(y)|^2dy
\\
&+r^\alpha\int_{B({x},r)}|\nabla u(y)|^2dy.
\end{align*}
That is we have
\[
|f(x)| \|\nabla u\|^2_{L^2(B({x},r)}\le \|f|\nabla u|^2\|_{L^1(B({x},r ))}+r^\alpha \|\nabla u\|^2_{L^2(B({x},r))}.
\]
Since $u$ is non constant, by the unique continuation property, we have $\|\nabla u\|^2_{L^2(B({x},r))}\ne 0$, $0<r< \delta$. Therefore
\[
|f(x)|\le \frac{\|f|\nabla u|^2\|_{L^1(B({x},r ))}}{\|\nabla u\|^2_{L^2(B({x},r))}}+r^\alpha ,\quad 0<r< \delta .
\]
This and \eqref{ii1} entail
\[
|f(x)| \le \mathcal{C}_2(|x-x_0|/\delta )^{-1}\left(\frac{\delta}{r}\right)^{\mathcal{C}_1(|x-x_0|)}\|f|\nabla u|^2\|_{L^1(B(x,r ))}+r^\alpha ,\quad 0<r< \delta .
\]
Hence
%\begin{equation}\label{equat1}
\[
|f(x)| \le  \mathcal{C}_2(|x-x_0|/\delta )^{-1}\left(\frac{1}{s}\right)^{\mathcal{C}_1(|x-x_0|)}\|f|\nabla u|^2\|_{L^1(\mathcal{O})}+\delta ^\alpha s^\alpha ,\quad 0<s< 1 .
\]
In consequence
\[
\|f\|_{L^\infty(\omega)} \le \hat{\mathcal{C}}_2\left(\frac{1}{s}\right)^{\hat{\alpha}}\|f|\nabla u|^2\|_{L^1(\mathcal{O})}+\delta ^\alpha s^\alpha ,\quad 0<s< 1 ,
\]
where $\hat{\alpha}=\max_{x\in \overline{\mathcal{O}}}\mathcal{C}_1(|x-x_0|/\delta)$. The expected inequality follows by minimizing the right hand side of the last inequality, with respect to $s$.
\end{proof}

\section{Proof Theorem \ref{theorem1}}\label{section4}

Pick $(a,b), (\tilde{a},\tilde{b})\in \mathcal{D}(\lambda, \kappa)$ and let $u_j=G_{a,b}(\cdot ,\xi_j)$ and $\tilde{u}_j=G_{\tilde{a},\tilde{b}}(\cdot ,\xi_j)$, $j=1,2$. By simple computations we can check that  $w=u_2/u_1$ is the solution of the equation
\[
\mbox{div}(\sigma \nabla w)=0\quad \mbox{in}\; \mathbb{R}^n\setminus\{\xi_1,\xi_2\},
\]
with 
\[
\sigma =au_1^2=\frac{av_1^2}{b^2}.
\]
Similarly, $\tilde{w}=\tilde{u}_2/\tilde{u}_1$ is the solution of the equation
\[
\mbox{div}(\tilde{\sigma} \nabla \tilde{w})=0\quad \mbox{in}\; \mathbb{R}^n\setminus\{\xi_1,\xi_2\},
\]
with 
\[
\tilde{\sigma} =\tilde{a}\tilde{u}_1^2=\frac{\tilde{a}\tilde{v}_1^2}{\tilde{b}^2}.
\]
We know from Lemma \ref{lemmaq1} that there exist $x^\ast\in  B(\xi_2,|\xi_1-\xi_2|/2)\setminus\{\xi_2\}$, $\eta_0=\eta_0(n,\lambda,\kappa, |\xi_1-\xi_2|)>0$ and $\rho=\rho(n,\lambda,\kappa, |\xi_1-\xi_2|)>0$  so that $\overline{B}(x^\ast,\rho)\subset B(\xi_2,|\xi_1-\xi_2|/2)\setminus\{\xi_2\}$ and 
\begin{equation}\label{m1}
\eta_0\le \| \nabla w\|_{L^2(B(x^\ast,\rho))}.
\end{equation}
Fix then a bounded domain $\mathcal{Q}$ of $\mathbb{R}^n\setminus\{\xi_1,\xi_2\}$ is such a way that $\Omega \cup B(x^\ast,\rho)\Subset \mathcal{Q}$, and set
\[
\delta =\mbox{dist}(\Omega \cup B(x^\ast,\rho),\partial \mathcal{Q}).
\]
In the rest of this proof $\mathbf{d}=\mbox{diam}(\mathcal{Q})$. According to Corollary \ref{corollaryq2} 
\begin{equation}\label{m2}
\|\nabla w\|_{L^2(\mathcal{Q})}\le M=Ce^{c(\mathbf{d}+\varrho_+)}\left(1+ \max\left(\varrho_-^{-(2+\alpha)},1\right)\varrho_-^{-n+2}\right)^4,
\end{equation}
with $C=C(n,\lambda ,\kappa ,\alpha, \theta)$ and $c=c(n,\lambda,\kappa,\alpha,\theta)$, $\varrho_-=\min \left(\mathrm{dist}\left(\xi_1,\mathcal{Q}\right),\mbox{dist}\left(\xi_2,\mathcal{Q}\right)\right)$ and $\varrho_+=\max \left(\mathrm{dist}\left(\xi_1,\mathcal{Q}\right),\mathrm{dist}\left(\xi_2,\mathcal{Q}\right)\right)$.

Now, since 
\[
\|\sigma\|_{C^{0,1}(\overline{Q})}\le \|a\|_{C^{0,1}(\overline{Q})}\|u_1\|_{C^{0,1}(\overline{Q})}^2,
\]
we get, similarly to the end of the proof of Corollary \ref{corollaryq2}, from \cite[Lemma 6.35, page 135]{GT}
\[
\|\sigma\|_{C^{0,1}(\overline{Q})}\le C\|a\|_{C^{0,1}(\overline{Q})}\|u_1\|_{C^{2,\alpha}(\overline{Q})}^2, 
\]
where $C=C(n,\lambda ,\kappa, \mathbf{d} ,\xi_1,\xi_2)>0$ is a constant. This inequality together with Proposition \ref {propositionlr1} yield
\begin{equation}\label{m3.0}
\|\sigma\|_{C^{0,1}(\overline{Q})}\le C,
\end{equation}
for some constant $C=C(n,\lambda ,\kappa, \mathbf{d} ,\xi_1,\xi_2)>0$.

On the other hand, we have from \eqref{maineqq}
\begin{equation}\label{m3.1}
C^{-1}\min_{x\in \overline{\mathcal{Q}}}\frac{ e^{-2\sqrt{c \kappa}|x-\xi_1|}}{|x-\xi_1|^{n-2}}\le u_1,\quad \mbox{in}\; \overline{\mathcal{Q}},
\end{equation}
with constants $c=c(n,\lambda)>0$ and $C=C(n,\lambda,\kappa)>0$.

We get by combining \eqref{m3.0} and \eqref{m3.1} that there exists $\varkappa=\varkappa(n,\lambda ,\kappa ,\alpha,\Omega ,\xi_1,\xi_2)>1$ so that
\[
\varkappa ^{-1}\le \sigma \quad \mbox{and}\quad \|\sigma\|_{C^{0,1}(\overline{Q})}\le \varkappa .
\]
Next, if $\rho \le \delta /3$ then \eqref{m1} implies obviously 
\begin{equation}\label{m3}
\eta_0 \le \|\nabla w\|_{L^2(B(x_0,\delta/3))},
\end{equation}
with $\eta_0$ as in \eqref{m1}. When $\rho>\delta/3$ we can use the three-ball inequality in Theorem \ref{theoremlb1} in order to get 
\[
\tilde{C}\|\nabla w\|_{L^2(B(x^\ast,\rho))}\le \|\nabla w\|_{L^2(B(x_0,\delta/3))}^s\|\nabla w\|_{L^2(B(x^\ast,\rho+\delta/3))}^{1-s},
\]
where $\tilde{C}=\tilde{C}(n,\lambda ,\kappa ,\Omega ,\xi_1,\xi_2)$ and $0<s=s(n,\lambda ,\kappa ,\Omega ,\xi_1,\xi_2)<1$ are constants. Whence
\begin{equation}\label{m3.2}
(\tilde{C}\eta_0)^{1/s}M^{(s-1)/s}\le \|\nabla w\|_{L^2(B(x_0,\delta/3))}.
\end{equation}
In light of \eqref{m2}, \eqref{m3} and \eqref{m3.2}, we can infer that, for some constant $\eta =\eta (n,\lambda ,\kappa ,\Omega ,\xi_1,\xi_2)>0$,   $w\in \mathscr{S}(\mathcal{Q},x^\ast, M,\eta ,\delta /3)$, where $M$ is as in \eqref{m2} and $\mathscr{S}(\mathcal{Q},x^\ast, M,\eta ,\delta /3)$ is defined in  \eqref{S}.

\begin{lemma}\label{lemma6.1}
We have
\begin{equation}\label{6.2}
C\|(\sigma -\tilde{\sigma})|\nabla w |^2 \|_{L^1 (\Omega )}\le \|w -\tilde{w}\|_{L^2(\Omega )}^{\theta /(2+\theta)}
+\|\sigma -\tilde{\sigma} \|_{L^\infty (\Gamma )},
\end{equation}
where $C=C(n,\lambda ,\kappa ,\Omega ,\alpha,\theta,\xi_1,\xi_2)>0$ is a constant.
\end{lemma}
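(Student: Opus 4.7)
The plan is to derive an Alessandrini-type integral identity relating $\int_\Omega (\sigma-\tilde\sigma)|\nabla w|^2\,dx$ to $v:=w-\tilde w$ and its normal trace on $\Gamma:=\partial\Omega$, and then to close the estimate by interpolation between $L^2(\Omega)$ and $H^{2+\theta}(\Omega)$, the latter being a uniformly bounded ambient norm thanks to Corollary \ref{corollaryq2}. Since $\xi_1,\xi_2\notin\overline\Omega$, both $w$ and $\tilde w$ are classical solutions in $\Omega$ of $\mathrm{div}(\sigma\nabla w)=0$ and $\mathrm{div}(\tilde\sigma\nabla\tilde w)=0$, hence
\[
\mathrm{div}(\sigma\nabla v)=\mathrm{div}\bigl((\tilde\sigma-\sigma)\nabla\tilde w\bigr)\quad\text{in }\Omega.
\]
Testing this equation against $w$, integrating by parts on the $C^{1,1}$ domain $\Omega$ and using the pointwise identity $\nabla w\cdot\nabla\tilde w=|\nabla w|^2-\nabla w\cdot\nabla v$, I would obtain the identity
\[
\int_\Omega(\sigma-\tilde\sigma)|\nabla w|^2\,dx=-\int_\Omega\tilde\sigma\,\nabla w\cdot\nabla v\,dx+\int_\Gamma\sigma w\,\partial_\nu v\,dS-\int_\Gamma(\tilde\sigma-\sigma)w\,\partial_\nu\tilde w\,dS.
\]

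With this identity in hand, I would estimate the three terms on its right-hand side. The boundary term involving $\tilde\sigma-\sigma$ on $\Gamma$ is dominated directly by $C\|\sigma-\tilde\sigma\|_{L^\infty(\Gamma)}$, the constant $C$ absorbing the uniform $C^{2,\alpha}(\overline\Omega)$ bounds on $w$ and $\tilde w$ given by Corollary \ref{corollaryq1}. The interior term is bounded by Cauchy--Schwarz as $C\|\nabla v\|_{L^2(\Omega)}$, using the uniform $L^\infty$ bound on $\tilde\sigma$ and the uniform $L^2$ bound on $\nabla w$. The remaining boundary term is controlled, again by Cauchy--Schwarz on $\Gamma$ followed by the continuous trace $H^2(\Omega)\hookrightarrow L^2(\Gamma)$ applied to $\nabla v$, by $C\|v\|_{H^2(\Omega)}$.

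I would then close the argument using the Sobolev interpolation
\[
\|v\|_{H^s(\Omega)}\le C\|v\|_{L^2(\Omega)}^{1-s/(2+\theta)}\|v\|_{H^{2+\theta}(\Omega)}^{s/(2+\theta)},\qquad 0\le s\le 2+\theta,
\]
applied at $s=1$ and $s=2$, combined with the uniform bound $\|v\|_{H^{2+\theta}(\Omega)}\le\|w\|_{H^{2+\theta}(\Omega)}+\|\tilde w\|_{H^{2+\theta}(\Omega)}\le C$ supplied by Corollary \ref{corollaryq2}. This produces $\|\nabla v\|_{L^2(\Omega)}\le C\|v\|_{L^2(\Omega)}^{(1+\theta)/(2+\theta)}$ and $\|v\|_{H^2(\Omega)}\le C\|v\|_{L^2(\Omega)}^{\theta/(2+\theta)}$. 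Since $\|v\|_{L^2(\Omega)}$ is itself a priori bounded, the exponent $(1+\theta)/(2+\theta)$ may be majorised by $\theta/(2+\theta)$ at the cost of an absorbing constant, so both the interior and the boundary contributions collapse to $C\|v\|_{L^2(\Omega)}^{\theta/(2+\theta)}$; combining with the $\|\sigma-\tilde\sigma\|_{L^\infty(\Gamma)}$ bound on the remaining boundary term yields \eqref{6.2}.

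The hard part is only the bookkeeping which ensures that every constant appearing along the way depends solely on the fixed data $\mathfrak D$. This relies on the regularity machinery of Section \ref{section2}: Corollaries \ref{corollaryq1}--\ref{corollaryq2} for the uniform $C^{2,\alpha}$ and $H^{2+\theta}$ bounds on $w$ and $\tilde w$, and the two-sided pointwise estimate \eqref{maineqq} for $G_{a,b}$, which is what guarantees that $\sigma,\tilde\sigma$ are bounded away from $0$ and from $+\infty$ on $\overline\Omega$ uniformly with respect to $(a,b)\in\mathcal{D}(\lambda,\kappa)$. Once those ingredients are invoked, the remainder of the argument is a routine Alessandrini-plus-interpolation computation with no essential analytical obstacle.
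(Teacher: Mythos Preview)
Your Alessandrini identity is correct, but it only controls the \emph{signed} integral
\[
\int_\Omega (\sigma-\tilde\sigma)\,|\nabla w|^2\,dx,
\]
whereas the quantity on the left of \eqref{6.2} is the $L^1$ norm
\[
\|(\sigma-\tilde\sigma)|\nabla w|^2\|_{L^1(\Omega)}=\int_\Omega |\sigma-\tilde\sigma|\,|\nabla w|^2\,dx.
\]
Since $\sigma-\tilde\sigma$ has no sign, bounding the former does not bound the latter, and this is precisely what is needed downstream in Lemma \ref{lemma-a2}, whose input is $\|f|\nabla u|^2\|_{L^1}$ with $f=\sigma-\tilde\sigma$. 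So as written the argument does not prove the lemma.

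The paper closes this gap with one extra trick you are missing: set $\zeta=\sigma-\tilde\sigma$ and observe that, because $\zeta$ is Lipschitz, $\nabla|\zeta|=\mathrm{sgn}_0(\zeta)\nabla\zeta$ a.e., so
\[
\mathrm{div}(|\zeta|\nabla w)=\mathrm{sgn}_0(\zeta)\,\mathrm{div}(\zeta\nabla w)=\mathrm{sgn}_0(\zeta)\,\mathrm{div}(\tilde\sigma\nabla v).
\]
Integrating $\int_\Omega|\zeta|\,|\nabla w|^2\,dx$ by parts then gives
\[
\int_\Omega|\zeta|\,|\nabla w|^2\,dx=-\int_\Omega \mathrm{sgn}_0(\zeta)\,\mathrm{div}(\tilde\sigma\nabla v)\,w\,dx+\int_\Gamma|\zeta|\,w\,\partial_\nu w\,dS,
\]
which is bounded by $C(\|v\|_{H^2(\Omega)}+\|\zeta\|_{L^\infty(\Gamma)})$. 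From there your interpolation step between $L^2$ and $H^{2+\theta}$ (with the uniform $H^{2+\theta}$ bound from Corollary~\ref{corollaryq2}) is exactly right and finishes the proof. The rest of your bookkeeping on constants is fine.
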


\begin{proof}
Clearly, if $\zeta =\sigma -\tilde{\sigma}$ and $u=w-\tilde{w}$, then
\[
\mathrm{div}(\tilde{\sigma} \nabla u )=\mathrm{div}(\zeta \nabla w).
\]
Recall that $\mathrm{sgn}_0$ is the sign function defined on $\mathbb{R}$ by: $\mathrm{sgn}_0(t)=-1$ if $t<1$, $\mathrm{sgn}_0(0)=0$ and $\mathrm{sgn}_0(t)=1$ if $t>0$. Since 
\begin{align*}
\mathrm{div}(|\zeta |\nabla w )&= \nabla |\zeta |\cdot \nabla w +|\zeta |\Delta w 
\\
&= \mathrm{sgn}_0(\zeta) \nabla \zeta \cdot \nabla w +\mathrm{sgn}_0 (\zeta)\zeta \Delta w
\\
&= \mathrm{sgn}_0(\zeta)\mathrm{div}(\zeta \nabla w )=\mathrm{sgn}_0(\zeta)\mathrm{div}(\tilde{\sigma} \nabla u ),
\end{align*}
we get by integrating by parts
\begin{align}
\int_\Omega |\zeta| |\nabla w |^2dx &=-\int_\Omega \mathrm{div}(|\zeta |\nabla w )w dx+ \int_\Gamma |\zeta |w \partial _\nu w dS(x)\label{eq5.1}
\\
&=-\int_\Omega  \mathrm{sgn}_0(\zeta)\mathrm{div}(\tilde{\sigma} \nabla u )w dx+\int_\Gamma |\zeta |w \partial _\nu w dS(x)  .\nonumber
\end{align}
Thus
\[
\int_\Omega |\zeta| |\nabla w |^2dx \le C\left( \|u\|_{H^2(\Omega )}+\|\zeta \|_{L^\infty (\Gamma )}\right).
\]
This, the following interpolation inequality 
\[
\|u\|_{H^2(\Omega )}\le c_\Omega \|u\|_{L^2(\Omega )}^{\theta /(2+\theta)}\| u\|_{H^{2+\theta}(\Omega )}^{2/(2+\theta)}
\]
and Corollary \ref{corollaryq2}  give \eqref{6.2}.
\end{proof}

%For fixed $x\in \Omega$, let $\mu=\mu(|x- x^\ast|/\delta)$ and $\mathcal{C}_3=\mathcal{C}_3(|x- x^\ast|/\delta)$ be as in Lemma \ref{lemma-a2} with $x_0$ substituted by $x^\ast$.

We have from \eqref{E2} in Lemma \ref{lemma-a2}
\bea
\| \tilde \sigma-\sigma \|_{C(\overline{\Omega})} \le \hat{\mathcal{C}}_3
\|\tilde \sigma-\sigma\|_{ C^{0,\alpha}(\overline{\Omega})}^{1-\hat{\mu}}\|(\sigma -\tilde{\sigma})|\nabla w |^2\|_{L^1(\Omega )}^{\hat{\mu}},
\eea
from which we obtain
\bea
\| \tilde \sigma-\sigma \|_{C(\overline{\Omega})} \le \hat{\mathcal{C}}_3\max \left(1,
\|\tilde \sigma-\sigma\|_{C^{0,\alpha}(\overline{\Omega})}\right)\|(\sigma -\tilde{\sigma})|\nabla w |^2\|_{L^1(\Omega )}^{\hat{\mu}}.
\eea
Combined with Proposition \ref{propositionlr1},  this inequality gives
\bea
\| \tilde \sigma-\sigma \|_{C(\overline{\Omega})}\le C
\|(\sigma -\tilde{\sigma})|\nabla w |^2\|_{L^1(\Omega )}^{\hat{\mu}}.
\eea
Here and henceforward, $C=C(n,\lambda ,\kappa ,\Omega ,\alpha,\theta,\xi_1,\xi_2)>0$ is a generic constant.

Therefore, we obtain in light of Lemma \ref{lemma6.1}
\bea
 \| \tilde \sigma-\sigma \|_{C(\overline{\Omega})}\le C
\left(\|w -\tilde{w}\|_{L^2(\Omega )}^{\theta /(2+\theta)}+\|\sigma -\tilde{\sigma} \|_{C (\Gamma )}\right)
^{\hat{\mu}}.
\eea

Since $\tilde a= a$ and $\tilde b = b$ on $\Gamma$ and regarding the regularity of  $u_i$ and $\tilde u_i,\, i=1, 2$, we finally get 
\bean  \label{interm-inequality}
\| \tilde \sigma-\sigma \|_{C(\overline{\Omega})}\le C
\left(\|v_1 -\tilde{v}_1\|_{C(\overline{\Omega}) }+\|v_2 -\tilde{v}_2\|_{C(\overline{\Omega}) }\right)
^{\hat{\mu}_0},
\eean
with
\[
\hat{\mu}_0=\frac{\theta \hat{\mu}}{2+\theta}.
\]

The following lemma will be used in sequel.

\begin{lemma} \label{u1estim}
We have 
\begin{equation}\label{E0} 
\|u_1^{-1}-\tilde{u}_1^{-1}\|_{C^{2,\alpha}(\overline{\Omega})}  \le C
\left(\|v_1 -\tilde{v}_1\|_{C(\overline{\Omega}) }+\|v_2 -\tilde{v}_2\|_{C(\overline{\Omega}) }\right)
^{\hat{\mu}_1},
\end{equation}
where $0<\hat{\mu}_1=\hat{\mu}_1(n,\Omega ,\lambda,\kappa,\alpha,\theta,\xi_1,\xi_2)<1$ and $C=C(n,\Omega ,\lambda,\kappa,\alpha,\theta,\xi_1,\xi_2)>0$ are constants.
\end{lemma}

\begin{proof}
In this proof $C=C(n,\Omega ,\lambda,\kappa,\alpha,\theta,\xi_1,\xi_2)>0$  is a  generic constant. It is not hard to check that
\begin{align*}
&-\mathrm{div}(\sigma \nabla u_1^{-1})=v_1\quad \mathrm{in}\; \Omega ,
\\
&-\mathrm{div}(\tilde{\sigma} \nabla \tilde{u}_1^{-1})=\tilde{v}_1\quad \mathrm{in}\; \Omega .
\end{align*}
 Hence
\[
-\mathrm{div}(\sigma \nabla (u_1^{-1}-\tilde{u}_1^{-1}))=(v_1-\tilde{v}_1)+\mathrm{div}((\sigma -\tilde{\sigma})\nabla \widetilde{u}_1^{-1} )\quad \mathrm{in}\; \Omega.
\]
By the usual H\"older a priori estimate (see \cite[Theorem 6.6, page 98]{GT})
\begin{align*}
C\|u_1^{-1}-\tilde{u}_1^{-1}&\|_{C^{2,\alpha}(\overline{\Omega})}\le  \|v_1-\tilde{v}_1\|_{C^{0,\alpha}(\overline{\Omega})}
\\
&+\|\mathrm{div}((\sigma -\tilde{\sigma})\nabla \widetilde{u}_1^{-1} )\|_{C^{0,\alpha}(\overline{\Omega})}+ \|u_1^{-1}-\tilde{u}_1^{-1}\|_{C^{0,\alpha}(\Gamma)}.
\end{align*} 
Consequently
\begin{equation}\label{m4}
\|u_1^{-1}-\tilde{u}_1^{-1}\|_{C^{2,\alpha}(\overline{\Omega})}\le C\left( \|v_1-\tilde{v}_1\|_{C^{0,\alpha}(\overline{\Omega})}+\|\sigma -\tilde{\sigma}\|_{C^{1,\alpha}(\overline{\Omega})}\right),
\end{equation}
where we used 
\[
\|u_1^{-1}-\tilde{u}_1^{-1}\|_{C^{0,\alpha}(\Gamma)}=\|b(v_1^{-1}-\tilde{v}_1^{-1})\|_{C^{0,\alpha}(\Gamma)}.
\]
On the other hand, since 
\[
\|\sigma -\tilde{\sigma}\|_{C^{1,1}(\overline{\Omega})}\le C,\quad \|v_1-\tilde{v}_1\|_{C^{1,\alpha}(\overline{\Omega})}\le C
\]
and $\Omega$ is $C^{1,1}$,  we get again from the interpolation inequality in \cite[Lemma 6.35, page 135]{GT} 
\begin{equation}\label{m6.0}
\|\sigma -\tilde{\sigma}\|_{C^{1,\alpha}(\overline{\Omega})}\le C\|\sigma -\tilde{\sigma}\|_{C(\overline{\Omega})}^\tau ,\quad \|v_1-\tilde{v}_1\|_{C^{0,\alpha}(\overline{\Omega})}\le C\|v_1-\tilde{v}_1\|^\tau_{C(\overline{\Omega})},
\end{equation}
where $0<\tau=\tau(\Omega, \alpha) <1$ is a constant. Inequality \eqref{m6.0} in \eqref{m4} yields
\begin{equation}\label{m7}
\|u_1^{-1}-\tilde{u}_1^{-1}\|_{C^{2,\alpha}(\overline{\Omega})}\le C\left( \|v_1-\tilde{v}_1\|^\tau_{C(\overline{\Omega})}+\|\sigma -\tilde{\sigma}\|_{C(\overline{\Omega})}^\tau\right).
\end{equation}
On the other hand, we have from \eqref{interm-inequality} %we have, by using inequality \eqref{E2} instead of \eqref{E1} in Lemma \ref{lemma-a2},
\bean  \label{E3}
 \| \tilde \sigma-\sigma \|_{C(\overline{\Omega})} \le C
\left(\|v_1 -\tilde{v}_1\|_{C(\overline{\Omega}) }+\|v_2 -\tilde{v}_2\|_{C(\overline{\Omega}) }\right)
^{\hat{\mu}_0}.
\eean
Whence, we get in light of inequalities \eqref{m7} and \eqref{E3}, where $\hat{\mu}_1=\tau \hat{\mu}_0$,
\[
\|u_1^{-1}-\tilde{u}_1^{-1}\|_{C^{2,\alpha}(\overline{\Omega})}  \le C
\left(\|v_1 -\tilde{v}_1\|_{C(\overline{\Omega}) }+\|v_2 -\tilde{v}_2\|_{C(\overline{\Omega})}\right)
^{\hat{\mu}_1}.
\]
This is the expected inequality.
\end{proof}

Also, since 
\[
\|\sigma -\tilde{\sigma}\|_{C^{1,1}(\overline{\Omega})}\le C,\quad \|v_1-\tilde{v}_1\|_{C^{2,\alpha}(\overline{\Omega})}\le C,
\]
we can proceed as in the preceding proof to get
\begin{equation}\label{m6}
\|\sigma -\tilde{\sigma}\|_{C^{1,\alpha}(\overline{\Omega})}\le C\|\sigma -\tilde{\sigma}\|_{C(\overline{\Omega})}^\tau ,\quad \|v_1-\tilde{v}_1\|_{C^{1,\alpha}(\overline{\Omega})}\le C\|v_1-\tilde{v}_1\|^\tau_{C(\overline{\Omega})},
\end{equation}
the constant $0<\tau=\tau (\Omega ,\alpha) <1$. But 
\begin{align*}
a-\tilde{a}=\sigma u_1^{-2} -\tilde{\sigma}\tilde{u}_1{^{-2}}&=(\sigma -\tilde{\sigma}) u_1^{-2}+ \tilde{\sigma}(u_1^{-2} -\tilde{u}_1^{-2})
\\
&=(\sigma -\tilde{\sigma}) u_1^{-2}+\tilde{\sigma}(u_1^{-1} +\tilde{u}_1^{-1})(u_1^{-1} -\tilde{u}_1^{-1}).
\end{align*} 
Hence
\begin{equation}\label{m5.1}
\|a -\tilde{a}\|_{C^{1,\alpha}(\overline{\Omega})}\le C\left( \|u_1^{-1} -\tilde{u}_1^{-1}\|_{C^{1,\alpha}(\overline{\Omega})}+\|\sigma -\tilde{\sigma}\|_{C^{1,\alpha}(\overline{\Omega})}\right).
\end{equation}
This inequality  together with \eqref{interm-inequality}, \eqref{E0} and \eqref{m6} imply
\begin{equation}\label{EQ1}
\|a -\tilde{a}\|_{C^{1,\alpha}(\overline{\Omega})}\le C\left( \|v_1-\tilde{v}_1\|_{C(\overline{\Omega})}+\|v_2-\tilde{v}_2\|_{C(\overline{\Omega})}\right)^{\hat{\mu}_1}.
\end{equation}
We proceed similarly for $b-\tilde{b}$. Since
\[
b-\tilde{b}=v_1u_1^{-1}-\tilde{v}_1\tilde{u}_1^{-1}=(v_1-\tilde{v}_1)u_1^{-1}+\tilde{v}_1(u_1^{-1}-\tilde{u}_1^{-1}),
\]
we have
\begin{equation}\label{EQ2}
\|b -\tilde{b}\|_{C^{0,\alpha}(\overline{\Omega})}\le C\left( \|v_1-\tilde{v}_1\|_{C(\overline{\Omega})}+\|v_2-\tilde{v}_2\|_{C(\overline{\Omega})}\right)^{\hat{\mu}_1}.
\end{equation}
The expected inequality follows by putting together \eqref{EQ1} and \eqref{EQ2}.

\appendix
\section{Proof of technical lemmas}\label{appendixA}

\begin{proof}[Proof of Lemma \ref{lemma-tse}]
In this proof $C=C(n,\mu,\nu)>1$ is a generic constant.

It is well known that $G_{1,\nu}$, $\nu >0$, the fundamental solution of the operator $-\Delta +\nu$, 
 is given by $G_{1,\nu}(x,\xi )=\mathcal{G}_{1,\nu}(x-\xi)$, $x,\xi \in \mathbb{R}^n$, with
\[
\mathcal{G}_{1,\nu}(x)= (2\pi)^{-n/2}(\sqrt{\nu}/|x|)^{n/2-1}K_{n/2-1}(\sqrt{\nu}|x|).
\]
In the particular case $n=3$, we have $K_{1/2}(z)=\sqrt{\pi /(2z)}e^{-z}$ and therefore
\[
\mathcal{G}_{1,\nu}(x)=\frac{e^{-\sqrt{\nu}|x|}}{4\pi |x|}.
\]

Let $f\in C_0^\infty (\mathbb{R}^n)$,    $\mu>0$ and $\nu >0$ be two constants, and denote by $u$ the solution of the equation
\[
(-\mu\Delta +\nu)u=f\quad \mbox{in}\; \mathbb{R}^n.
\]
Then 
\begin{equation}\label{app1}
u(x)=\int_{\mathbb{R}^n}G_{\mu,\nu}(x,\xi) f(\xi)d\xi ,\quad x\in \mathbb{R}^n.
\end{equation}
We remark that $v(x)=u(\sqrt{\mu}x)$, $x\in \mathbb{R}^n$ satisfies $(-\Delta +\nu )v=f(\sqrt{\mu}\; \cdot )$. Whence
\begin{align*}
u(\sqrt{\mu}x)=v(x)&= \int_{\mathbb{R}^n}\mathcal{G}_{1,\kappa}(x-\xi) f(\sqrt{\mu}\xi)d\xi 
\\
&= \mu^{-n/2}\int_{\mathbb{R}^n}\mathcal{G}_{1,\nu}(x-\xi/\sqrt{\mu}) f(\xi)d\xi ,\quad x\in \mathbb{R}^n.
\end{align*}
Hence
\begin{equation}\label{app2}
u(x)=\mu^{-n/2}\int_{\mathbb{R}^n}\mathcal{G}_{1,\nu}((x-\xi)/\sqrt{\mu}) f(\xi)d\xi ,\quad x\in \mathbb{R}^n.
\end{equation}
Comparing \eqref{app1} and \eqref{app2} we find
\[
G_{\mu ,\nu}(x,\xi)=\mu^{-n/2}\mathcal{G}_{1,\nu}((x-\xi)/\sqrt{\mu}),\quad x,\xi\in \mathbb{R}^n.
\]
Consequently $G_{\mu ,\nu}(x,\xi)=\mathcal{G}_{\mu ,\nu}(x-\xi)$ with
\begin{equation}\label{app3}
\mathcal{G}_{\mu,\nu}(x)= (2\pi \mu)^{-n/2}(\sqrt{\nu \mu}/|x|)^{n/2-1}K_{n/2-1}(\sqrt{\nu}|x|/\sqrt{\mu}),\quad x\in \mathbb{R}^n.
\end{equation}
By the usual asymptotic formula for modified Bessel functions of the second kind (see for instance \cite[9.7.2, page 378]{AS}) we have, when $|x|\rightarrow \infty$,
\[
K_{n/2-1}(\sqrt{\nu}|x|/\sqrt{\mu})=\left(\frac{\pi \sqrt{\mu}}{2\sqrt{\nu}|x|}\right)^{1/2}e^{-\sqrt{\nu}|x|/\sqrt{\mu}}\left( 1+O(1/|x|)\right),
\]
where $O(1/|x|)$ only depends on $n$, $\mu$ and $\nu$.

Consequently, there exits $R=R(n,\mu,\nu)>0$  so that
\begin{equation}\label{app4}
C^{-1}\frac{e^{-\sqrt{\nu}|x|/\sqrt{\mu}}}{|x|^{1/2}}\le K_{n/2-1}(\sqrt{\nu}|x|/\sqrt{\mu})\le C\frac{e^{-\sqrt{\nu}|x|/\sqrt{\mu}}}{|x|^{1/2}},\quad |x|\ge R.
\end{equation}
Substituting if necessary  $R$ by $\max (R,1)$, we have
\begin{equation}\label{app5}
\frac{1}{|x|^{n/2-1}}\le \frac{1}{|x|^{1/2}},\quad |x|\ge R.
\end{equation}
Moreover, we have 
\[
\frac{e^{-\sqrt{\nu}|x|/\sqrt{\mu}}}{|x|^{1/2}}=\left[|x|^{(n-3)/2}e^{-\sqrt{\nu}|x|/(2\sqrt{\mu})}\right]\frac{e^{-\sqrt{\nu}|x|/(2\sqrt{\mu})}}{|x|^{n/2-1}},\quad |x|\ge R.
\]
Since the function $x\rightarrow |x|^{(n-3)/2}e^{-\sqrt{\nu}|x|/(2\sqrt{\mu})}$ is bounded in $\mathbb{R}^n$, we deduce
\begin{equation}\label{app6}
\frac{e^{-\sqrt{\nu}|x|/\sqrt{\mu}}}{|x|^{1/2}}\le C\frac{e^{-\sqrt{\nu}|x|/(2\sqrt{\mu})}}{|x|^{n/2-1}},\quad |x|\ge R.
\end{equation}
Using \eqref{app5} and \eqref{app6} in \eqref{app4} in order to obtain
\begin{equation}\label{app7}
C^{-1}\frac{e^{-\sqrt{\nu}|x|/\sqrt{\mu}}}{|x|^{n/2-1}}\le K_{n/2-1}(\sqrt{\nu}|x|/\sqrt{\mu})\le C\frac{e^{-\sqrt{\nu}|x|/(2\sqrt{\mu})}}{|x|^{n/2-1}},\quad |x|\ge R.
\end{equation}

We now establish a similar estimate when $|x|\rightarrow 0$. To this end, we recall that according to formula \cite[9.6.9, page 375]{AS} we have
\[
K_{n/2-1}(\rho)\sim \frac{1}{2}\Gamma (n/2-1)\left(\frac{2}{\rho}\right)^{n/2-1}\quad \mbox{as}\; \rho\rightarrow 0,
\]
from which we deduce in a straightforward manner that there exists $0<r\le R$ so that
\begin{equation}\label{app8}
C^{-1}\frac{e^{-\sqrt{\nu}|x|/\sqrt{\mu}}}{|x|^{n/2-1}}\le K_{n/2-1}(\sqrt{\nu}|x|/\sqrt{\mu})\le C\frac{e^{-\sqrt{\nu}|x|/(2\sqrt{\nu})}}{|x|^{n/2-1}},\quad |x|\le r.
\end{equation}
The expected two sided inequality \eqref{tse} follows by combining \eqref{app4}, \eqref{app7} and \eqref{app8}.
\end{proof}

\begin{proof}[Proof of Lemma \ref{lemmaA1}]
Let $\mathcal{Q}$ be an open subset of $\mathbb{R}^n$, set $d=\mbox{diam}(\mathcal{Q})$, $d_x=\mbox{dist}(x,\partial \mathcal{Q})$ and $d_{x,y}=\min (d_x,d_y)$.

We introduce the following weighted H\"older semi-norms and  H\"older norms, where $\sigma \in \mathbb{R}$, $0<\gamma \le 1$, and $k$ is non-negative integer,
\begin{align*}
&[w]_{k,0;\mathcal{Q}}^{(\sigma)}=[w]_{k,\mathcal{Q}}^{(\sigma)}=\sup_{x\in \mathcal{Q},\; |\beta |=k}d_x^{k+\sigma}|\partial ^\beta w(x)|,
\\
&[w]_{k,\gamma;\mathcal{Q}}^{(\sigma)}=\sup_{x,y\in \mathcal{Q},\; |\beta |=k}d_{x,y}^{k+\gamma+\sigma}\frac{|\partial ^\beta w(y)-\partial ^\beta w(x)|}{|y-x|^\gamma},
\\
&|w|_{k;\mathcal{Q}}^{(\sigma)}=\sum_{j=0}^k[w]_{j;\mathcal{Q}}^{(\sigma)},
\\
&|w|_{k,\gamma;\mathcal{Q}}^{(\sigma)}=|w|_{k;\mathcal{Q}}^{(\sigma)}+[w]_{k,\gamma;\mathcal{Q}}^{(\sigma)}.
\end{align*}
In term of these notations, we have
\begin{align*}
&|a|_{0,\alpha ;\mathcal{Q}}^{(0)}=\sup_{x\in \mathcal{Q}}|a(x)|+\sup_{x,y\in \mathcal{Q}}d_{x,y}^{\alpha}\frac{|a(y)- a(x)|}{|y-x|^\alpha}\le (1+\mathbf{d})\lambda ,
\\
&|\partial_ja|_{0,\alpha ;\mathcal{Q}}^{(1)}=\sup_{x\in \mathcal{Q}}d_x|\partial_j a(x)|+\sup_{x,y\in \mathcal{O}}d_{x,y}^{1+\alpha}\frac{|\partial_ja(y)- \partial_ja(x)|}{|y-x|^\alpha}\le (\mathbf{d}+\mathbf{d}^2)\lambda,
\\
&|b|_{0,\alpha ;\mathcal{Q}}^{(2)}=\sup_{x\in \mathcal{O}}d_x^2|b(x)|+\sup_{x,y\in \mathcal{Q}}d_{x,y}^{2+\alpha}\frac{|b(y)- b(x)|}{|y-x|^\alpha}\le (\mathbf{d}^2+\mathbf{d}^3)\lambda .
\end{align*}
In consequence
\begin{equation}\label{A1}
|a|_{0,\alpha ;\mathcal{Q}}^{(0)}+|\partial_ja|_{0,\alpha ;\mathcal{Q}}^{(1)}+|b|_{0,\alpha ;\mathcal{Q}}^{(2)}\le \Lambda (\mathbf{d})=[1+2\mathbf{d}+2\mathbf{d}^2+\mathbf{d}^3]\lambda .
\end{equation}
Following \cite{GT} we define also
\begin{align*}
&[w]_{k,0;\mathcal{Q}}^\ast=[w]_{k,\mathcal{O}}^\ast=\sup_{x\in \mathcal{Q},\; |\beta |=k}d_x^{k}|\partial ^\beta w(x)|,
\\
&[w]_{k,\gamma;\mathcal{Q}}^\ast=\sup_{x,y\in \mathcal{Q},\; |\beta |=k}d_{x,y}^{k+\alpha}\frac{|\partial ^\beta w(y)-\partial ^\beta w(x)|}{|y-x|^\gamma},
\\
&|w|_{k;\mathcal{Q}}^\ast=\sum_{j=0}^k[w]_{j;\mathcal{Q}}^\ast,
\\
&|w|_{k,\gamma;\mathcal{Q}}^\ast=|w|_{k;\mathcal{Q}}^\ast+[w]_{k,\gamma;\mathcal{O}}^\ast.
\end{align*}

From \cite[Lemma 6.32, page 130]{GT} and its proof we have the following interpolation inequalities: suppose that $j$ and $k$, non negative integers, and $0\le \beta,\gamma \le 1$ are so that $j+\beta <k+\gamma$. Then there exist $C=C(n,\alpha,\beta )>0$ and $\vartheta =\vartheta (\alpha ,\beta )$ so that, for any $w\in C^{k,\alpha}(\mathcal{Q})$ and $\epsilon >0$, we have
\begin{align}
&[w]_{j,\beta;\mathcal{Q}}^\ast\le C\epsilon^{-\vartheta}|w|_{0;\mathcal{Q}}+\epsilon [w]_{k,\gamma;\mathcal{Q}}^\ast \label{A2},
\\
&|w|_{j,\beta;\mathcal{Q}}^\ast\le C\epsilon^{-\vartheta}|w|_{0;\mathcal{Q}}+\epsilon [w]_{k,\gamma;\mathcal{Q}}^\ast \label{A3}.
\end{align}
Here $|w|_{0;\mathcal{Q}}=\sup_{x\in \mathcal{Q}}|w(x)|$.

Checking carefully the proof of interior Schauder estimates in \cite[Theorem 6.2, page 90]{GT}, we get, taking into account inequalities \eqref{A1}-\eqref{A3}, the following result: there exist a constant $C=C(n)>0$ and $\tau =\tau (\alpha)$ so that, for any $0<\mu \le 1/2$ and $w\in C^{k,\alpha}(\mathcal{Q})$ satisfying $L_{a,b}w=0$ in $\mathcal{Q}$, we have
\begin{equation}\label{A4}
[w]_{2,\alpha,\mathcal{Q}}^\ast \le C\Lambda (\mathbf{d})\left(\mu^{-\tau}|w|_{0;\mathcal{Q}}+\mu^\alpha [w]_{2,\alpha,\mathcal{Q}}^\ast \right).
\end{equation}
Substituting in \eqref{A4} $C$ by $\max(C,2^{\alpha -1})$,  we may assume  in \eqref{A4} that $C=C(n,\alpha)\ge 2^{\alpha -1}$. Bearing in mind that $\Lambda (\mathbf{d})>1$, we can take in \eqref{A4}, $\mu =(2C\Lambda(\mathbf{d}))^{-1/\alpha} $. We find
\begin{equation}\label{A5}
[w]_{2,\alpha,\mathcal{Q}}^\ast \le C\Lambda (\mathbf{d})^\varkappa |w|_{0;\mathcal{Q}},
\end{equation}
for some constants $C=C(n,\alpha)>0$ and $\varkappa =\varkappa (\alpha )>1$.

Using again interpolation inequalities \eqref{A2} and \eqref{A3}, we deduce that 
\begin{equation}\label{A5}
|w|_{2,\alpha,\mathcal{Q}}^\ast \le C\Lambda (\mathbf{d})^\varkappa |w|_{0;\mathcal{Q}}.
\end{equation}

Let $\delta >0$ be so that $\mathcal{Q}_\delta =\{x\in \mathcal{Q};\; \mbox{dist}(x,\partial \mathcal{Q})>\delta\}$ is nonempty. If $\mathcal{Q}'$ is an open subset of $\mathcal{Q}_\delta$ then \eqref{A5} yields in a straightforward manner
\[
\|w\|_{C^{2,\alpha}\left(\overline{\mathcal{Q}'}\right)} \le C\max\left(\delta^{-(2+\alpha)},1\right)\Lambda (\mathbf{d})^\varkappa |w|_{0;\mathcal{Q}}.
\]
This is the expected inequality.
\end{proof}

\begin{lemma}\label{lemmaB1}
Let $K$ be a compact subset of $\mathbb{R}^n$ and $f\in C^{2,\alpha}(K)$ satisfying $\min_K|f|\ge c_->0$. Then
\begin{equation}\label{aB0}
\|1/f\|_{C^{2,\alpha}(K)}\le  C c_+^4\left(1+\|f\|_{C^{2,\alpha}(K)}\right)^3,
\end{equation}
where $c_+=\max (1,c_-^{-1})$ and $C=C(\mathrm{diam}(K))$ is a constant.
\end{lemma}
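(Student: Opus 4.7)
The plan is to compute the first and second partial derivatives of $1/f$ explicitly via the chain rule, and then to estimate their $C^{0,\alpha}$ norms by combining the Leibniz product rule for Hölder norms with an elementary bound for the reciprocal of a nonvanishing Hölder function.

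First, I would establish the basic reciprocal estimate. Starting from the identity
\[
\frac{1}{f(x)}-\frac{1}{f(y)}=\frac{f(y)-f(x)}{f(x)f(y)}
\]
and using $|f|\ge c_-$, one gets at once $\|1/f\|_\infty \le c_+$ and $[1/f]_\alpha \le c_+^2[f]_\alpha$, hence
\[
\|1/f\|_{C^{0,\alpha}(K)} \le c_+^2 \bigl(1+\|f\|_{C^{0,\alpha}(K)}\bigr).
\]
Combining this with the elementary product inequality $\|gh\|_{C^{0,\alpha}} \le c\,\|g\|_{C^{0,\alpha}}\|h\|_{C^{0,\alpha}}$ applied to the identities $1/f^2=(1/f)^2$ and $1/f^3=(1/f)\cdot(1/f^2)$ yields analogous bounds for $\|1/f^k\|_{C^{0,\alpha}(K)}$, $k=2,3$, of respective orders $c_+^3$ and $c_+^4$ in $c_+$, and order $k$ in $1+\|f\|_{C^{0,\alpha}(K)}$.

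Second, I would differentiate explicitly:
\[
\partial_j(1/f) = -f^{-2}\,\partial_j f,\qquad \partial_{ij}(1/f) = -f^{-2}\,\partial_{ij}f + 2 f^{-3}\,\partial_i f\,\partial_j f,
\]
and apply the product rule for $C^{0,\alpha}$ norms to each factor. Using that $\|\partial_j f\|_{C^{0,\alpha}}\le\|f\|_{C^{1,\alpha}}$ and $\|\partial_{ij}f\|_{C^{0,\alpha}}\le\|f\|_{C^{2,\alpha}}$, the highest-order contribution comes from $f^{-3}\partial_i f\,\partial_j f$, which is bounded by $C c_+^4 (1+\|f\|_{C^{0,\alpha}})^3\|f\|_{C^{1,\alpha}}^2$, hence by $C c_+^4 (1+\|f\|_{C^{2,\alpha}})^3$. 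All other contributions are dominated by this one, and summing the estimates for $\|1/f\|_{C^{0,\alpha}}$, $\|\partial_j(1/f)\|_{C^{0,\alpha}}$ and $\|\partial_{ij}(1/f)\|_{C^{0,\alpha}}$ yields the desired inequality.

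There is no real technical obstacle, only careful bookkeeping of the powers of $c_+$ and of $1+\|f\|_{C^{2,\alpha}}$ at each level of differentiation. The mild dependence of $C$ on $\mathrm{diam}(K)$ reflects only the way $L^\infty$ norms and Hölder seminorms balance inside the product rule and does not affect the final exponents $4$ and $3$.
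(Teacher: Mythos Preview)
Your approach matches the paper's: explicit formulas for $\partial_j(1/f)$ and $\partial_{ij}(1/f)$, followed by H\"older-norm product estimates on each factor. The paper works directly with telescoping identities (e.g.\ writing $1/f^3(y)-1/f^3(x)$ as $(f(x)-f(y))$ times a sum of reciprocals) to obtain $[1/f^j]_\alpha\le 3c_+^4[f]_\alpha$ for $j=2,3$, and similarly expands $[\partial_if\,\partial_jf/f^3]_\alpha$ and $[\partial_{ij}^2f/f^2]_\alpha$ by hand; the ingredients are the same as yours.

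There is, however, a bookkeeping slip in your final step. You record $\|1/f^k\|_{C^{0,\alpha}}$ as being of order $k$ in $1+\|f\|_{C^{0,\alpha}}$ (which is indeed what your stated crude product inequality $\|gh\|_{C^{0,\alpha}}\le c\,\|g\|_{C^{0,\alpha}}\|h\|_{C^{0,\alpha}}$ delivers), and then bound $f^{-3}\partial_if\,\partial_jf$ by $Cc_+^4(1+\|f\|_{C^{0,\alpha}})^3\|f\|_{C^{1,\alpha}}^2$. But that quantity is only controlled by $(1+\|f\|_{C^{2,\alpha}})^5$, not $(1+\|f\|_{C^{2,\alpha}})^3$ as you assert. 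To land on the exponent $3$ you must use the sharper fact that $\|1/f^3\|_{C^{0,\alpha}}\le Cc_+^4(1+\|f\|_{C^{0,\alpha}})$, i.e.\ order \emph{one} in $1+\|f\|_{C^{0,\alpha}}$; this follows immediately from the direct identity
\[
\frac{1}{f^3(y)}-\frac{1}{f^3(x)}=\Bigl(\frac{1}{f(x)f^3(y)}+\frac{1}{f^2(x)f^2(y)}+\frac{1}{f^3(x)f(y)}\Bigr)\bigl(f(x)-f(y)\bigr),
\]
exactly as in the paper. With that correction your argument goes through and coincides with the paper's proof.
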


\begin{proof}
Let $x,y\in K$. Using  $|1/f|_{0;K}\le c_+$ and the following identities
\begin{align*}
&\frac{1}{f^2(y)}-\frac{1}{f^2(x)}=\left(\frac{1}{f(x)f^2(y)}+\frac{1}{f(x)^2f(y)}\right) (f(x)-f(y)),
\\
&\frac{1}{f^3(y)}-\frac{1}{f^3(x)}=\left(\frac{1}{f(x)f^3(y)}+\frac{1}{f^2(x)f^2(y)}+\frac{1}{f(x)^3f(y)}\right) (f(x)-f(y)),
\end{align*}
we easily get
\begin{equation}\label{aB1}
[1/f^j]_{\alpha ;K}\le 3c_+^4[f]_{\alpha ;K},\quad j=2,3.
\end{equation}
Also, we have
\begin{align*}
&\frac{\partial_if(y)\partial_j f(x)}{f^3(y)}-\frac{\partial_if(y)\partial_j f(x)}{f^3(x)}=\frac{\partial_if(y)}{f^3(y)}(\partial_j f(y)-\partial_j f(x))
\\
&\hskip 2cm + \frac{\partial_jf(x)}{f^3(y)}(\partial_i f(y)-\partial_i f(x))+\left(\frac{1}{f^3(y)}-\frac{1}{f^3(x)}\right)(\partial_if(y)\partial_j f(x)).
\end{align*}
In light of \eqref{aB1}, this identity yields
\begin{align}
&[\partial_if\partial_jf/f^3]_{\alpha ;K}\le c_+^4\left([\partial_if]_{\alpha ;K}|\partial_jf|_{0;K}\right.\label{aB2}
\\
&\hskip 4cm \left.+[\partial_jf]_{\alpha ;K}|\partial_if|_{0;K}+[f]_{\alpha ;K}|\partial_if|_{0;K}|\partial_jf|_{0;K}\right).\nonumber
\end{align}
On the other hand, since
\[
\frac{\partial_{ij}^2f(y)}{f^2(y)}-\frac{\partial_{ij}^2f(x)}{f^2(x)}=\frac{1}{f^2(y)}(\partial_{ij}^2f(y)-\partial_{ij}^2f(x))+\left(\frac{1}{f^2(y)}-\frac{1}{f^2(y)}\right)\partial_{ij}^2f(x),
\]
we find, by using again \eqref{aB1},
\begin{equation}\label{aB3}
[\partial_{ij}^2f/f^2]_{\alpha;K}\le 3c_+^4\left([\partial_{ij}^2f]_{\alpha;K}+[f]_{\alpha ;K}|\partial_{ij}^2f|_{0,K}\right).
\end{equation}
Inequalities \eqref{aB2}, \eqref{aB3}, the identity $\partial_{ij}^2(1/f)=2\partial_if\partial_jf/f^3-\partial_{ij}^2f/f^2$ and the interpolation inequality \cite[Lemma 6.35, page 135]{GT} (by proceeding as in Corollary \ref{corollaryq1}) imply 

\begin{equation}\label{aB4}
[\partial_{ij}^2(1/f)]_{\alpha ,K}\le  Cc_+^4\left(1+\|f\|_{C^{2,\alpha}(K)}\right)^3,
\end{equation}
with $C=C(\mbox{diam}(K))$ is a constant.

The other terms for $1/f$ appearing in the norms $\|\cdot \|_{C^{2,\alpha}(K)}$ can be estimated similarly to the semi-norm in \eqref{aB4}. Inequality \eqref{aB0} then follows.
\end{proof}

Recall that $0<\theta <\alpha <1$.

\begin{lemma}\label{lemmaB2}
$C^{2,\alpha}(\overline{\mathcal{O}})$ is continuously embedded in $H^{2+\theta}(\mathcal{O})$. Furthermore, there exists $C=C(n,\alpha -\theta)$ so that, for any $w\in C^{2,\alpha}(\overline{\mathcal{O}})$, we have
\begin{equation}\label{aB5}
\|w\|_{H^{2+\theta}(\mathcal {O})} \le C\max\left(\mathbf{d}^{n/2},\mathbf{d}^{n/2+\alpha -\theta}\right)\|w\|_{C^{2,\alpha}(\overline{\mathcal{O}})},
\end{equation}
where $\mathbf{d}=\mathrm{diam}(\mathcal{O})$.
\end{lemma}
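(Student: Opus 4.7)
The plan is to work directly from the definition of the Slobodeckij norm
\[
\|w\|_{H^{2+\theta}(\mathcal{O})}^2=\sum_{|\beta|\le 2}\|\partial^\beta w\|_{L^2(\mathcal{O})}^2+\sum_{|\beta|=2}\iint_{\mathcal{O}\times\mathcal{O}}\frac{|\partial^\beta w(x)-\partial^\beta w(y)|^2}{|x-y|^{n+2\theta}}\,dx\,dy,
\]
and to estimate separately the $L^2$ part and the Gagliardo seminorm part. The bounds on the $C^{2,\alpha}$ norm will furnish pointwise bounds on $\partial^\beta w$ for $|\beta|\le 2$ and H\"older bounds for $|\beta|=2$, which I will convert to $L^2$ and fractional-Sobolev information by integrating over $\mathcal{O}$.

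First I would handle the classical part. For every $|\beta|\le 2$, the inequality $|\partial^\beta w(x)|\le \|w\|_{C^{2,\alpha}(\overline{\mathcal{O}})}$ gives
\[
\|\partial^\beta w\|_{L^2(\mathcal{O})}\le |\mathcal{O}|^{1/2}\|w\|_{C^{2,\alpha}(\overline{\mathcal{O}})}\le \omega_n^{1/2}\mathbf{d}^{n/2}\|w\|_{C^{2,\alpha}(\overline{\mathcal{O}})},
\]
where $\omega_n$ bounds the volume of the unit ball (using $|\mathcal{O}|\le \omega_n \mathbf{d}^n$). Summing over the $O(1)$ multi-indices of order at most two yields the $\mathbf{d}^{n/2}$ contribution.

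Second, for the Gagliardo part with $|\beta|=2$, I use the H\"older estimate $|\partial^\beta w(x)-\partial^\beta w(y)|\le [w]_{C^{2,\alpha}(\overline{\mathcal{O}})}|x-y|^\alpha$ to get
\[
\iint_{\mathcal{O}\times\mathcal{O}}\frac{|\partial^\beta w(x)-\partial^\beta w(y)|^2}{|x-y|^{n+2\theta}}\,dx\,dy\le \|w\|_{C^{2,\alpha}(\overline{\mathcal{O}})}^{2}\iint_{\mathcal{O}\times\mathcal{O}}\frac{dx\,dy}{|x-y|^{n-2(\alpha-\theta)}}.
\]
Since $\alpha-\theta>0$, the inner integral is locally integrable and can be estimated, for each fixed $x\in\mathcal{O}$, by extending the $y$-domain to $B(x,\mathbf{d})$:
\[
\int_{\mathcal{O}}\frac{dy}{|x-y|^{n-2(\alpha-\theta)}}\le \int_{B(x,\mathbf{d})}\frac{dy}{|x-y|^{n-2(\alpha-\theta)}}=\frac{|\mathbb{S}^{n-1}|}{2(\alpha-\theta)}\mathbf{d}^{2(\alpha-\theta)}.
\]
Integrating once more in $x$ produces a factor $|\mathcal{O}|\le \omega_n \mathbf{d}^n$, so the double integral is bounded by $C(n,\alpha-\theta)\mathbf{d}^{n+2(\alpha-\theta)}\|w\|_{C^{2,\alpha}(\overline{\mathcal{O}})}^{2}$. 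Taking square roots yields a contribution of order $\mathbf{d}^{n/2+\alpha-\theta}\|w\|_{C^{2,\alpha}(\overline{\mathcal{O}})}$.

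Combining the two contributions and taking the larger of the two powers of $\mathbf{d}$ produces exactly \eqref{aB5}, with $C=C(n,\alpha-\theta)$. There is no genuine analytic obstacle here: the only thing to watch is that the finiteness of the Gagliardo integral relies crucially on $\alpha>\theta$, which is precisely the hypothesis $0<\theta<\alpha$ fixed in the set of data $\mathfrak{D}$, and that the two terms in the maximum correspond, respectively, to the $L^2$ bulk contribution and to the fractional seminorm contribution.
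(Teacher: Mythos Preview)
Your argument is correct and follows essentially the same route as the paper's proof: both split the $H^{2+\theta}$ norm into its $L^2$ part (controlled by $|\mathcal{O}|\le C\mathbf{d}^n$) and the Gagliardo seminorm of the second derivatives (controlled via the H\"older bound and the estimate $\iint_{\mathcal{O}\times\mathcal{O}}|x-y|^{-n+2(\alpha-\theta)}\,dx\,dy\le C\,|\mathcal{O}|\,\mathbf{d}^{2(\alpha-\theta)}$). The only cosmetic difference is that the paper invokes an external lemma for this last double-integral bound, whereas you carry out the polar-coordinate computation directly.
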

\begin{proof}
Let $w\in C^{2,\alpha}(\overline{\mathcal{O}})$ and, for fixed $1\le i,j\le n$, set $g=\partial_{ij}^2w$. Then
\[
\int_{\mathcal{O}} \int_{\mathcal{O}} \frac{|g(x)-g(y)|^2}{|x-y|^{n+2\theta}}dxdy\le [g]_{\alpha ;\mathcal{O}}^2 \int_{\mathcal{O}} \int_{\mathcal{O}} \frac{1}{|x-y|^{n-2(\alpha -\theta)}}dxdy.
\]
In light of \cite[Lemma A3, page 246]{Ch}, this inequality yields
\[
\int_{\mathcal{O}} \int_{\mathcal{O}} \frac{|g(x)-g(y)|^2}{|x-y|^{n+2\theta}}dxdy\le \frac{|\mathbb{S}^{n-1}||\mathcal{O}|\mathbf{d}^{2(\alpha -\theta)}}{2(\alpha -\theta)}[g]_{\alpha ;\mathcal{O}}^2,
\]
But $|\mathcal{O}|\le |B(0,\mathbf{d})|$. Hence
\begin{equation}\label{aB6}
\int_{\mathcal{O}} \int_{\mathcal{O}} \frac{|g(x)-g(y)|^2}{|x-y|^{n+2\theta}}dxdy\le \frac{|\mathbb{S}^{n-1}|^2\mathbf{d}^{n+2(\alpha -\theta)}}{2(\alpha -\theta)}[g]_{\alpha ;\mathcal{O}}^2.
\end{equation}
Using \eqref{aB6} and the inequality
\[
\|h\|_{L^2(\mathcal{O})}^2\le |\mathbb{S}^{n-1}|\mathbf{d}^n|h|_{0,\mathcal{O}},\quad h\in C(\overline{\mathcal{O}}),
\]
we get from the definition of the norm of $H^s$-spaces in \cite[formula (1.3.2.2), page 17]{Gr}
\[
\|w\|_{H^{2+\theta}(\mathcal {O})} \le C\max\left(\mathbf{d}^{n/2},\mathbf{d}^{n/2+\alpha -\theta}\right)\|w\|_{C^{2,\alpha}(\overline{\mathcal{O}})},
\]
for some constant $C=C(n,\alpha -\theta)>0$. This is the expected inequality
\end{proof}

\end{document}